\documentclass[11pt]{amsart}
\usepackage{latexsym}
\usepackage{amsmath,amsthm,amsfonts,amssymb}
\usepackage[T1]{fontenc}
\usepackage[latin1]{inputenc}
\usepackage{aeguill} 
\usepackage{url} 
\usepackage{enumerate} 
\usepackage{mdwlist} 

\usepackage{graphicx}
\usepackage{xcolor}
\usepackage[a4paper,textwidth=15cm,textheight=25cm]{geometry}
\usepackage{xspace}
\usepackage{export} 
\usepackage{pgf, tikz}
\usetikzlibrary{decorations.pathreplacing,calligraphy}
\usetikzlibrary{arrows,decorations.markings}
\tikzset{fleche/.style args={#1:#2}{ postaction = decorate,decoration={name=markings,mark=at position #1 with {\arrow{triangle 45}[#2,scale=1]{>}}}}}

    \usepackage[font=small,labelsep=none]{caption}

\setcounter{topnumber}{9}
\setcounter{bottomnumber}{9}
\setcounter{totalnumber}{20}
\setcounter{dbltopnumber}{9}

\newtheorem{thm}{Theorem}[section]

\newtheorem*{thm*}{Theorem}
\newtheorem{dfn}[thm]{Definition} 
\newtheorem*{dfn*}{Definition}

\newtheorem{cor}[thm]{Corollary}
\newtheorem*{cor*}{Corollary}

\newtheorem{prop}[thm]{Proposition} 
\newtheorem*{prop*}{Proposition} 
\newtheorem*{properties*}{Properties} 
 
\newtheorem{lem}[thm]{Lemma} 
\newtheorem*{lem*}{Lemma}

\newtheorem*{claim*}{Claim} 
 
\newtheorem*{fact*}{Fact}

\newtheorem*{qst*}{Question}

\newtheorem*{pb*}{Problem}

\theoremstyle{remark}
 
\newtheorem*{algo*}{Algorithm} 
\newtheorem*{rem*}{Remark}
\newtheorem{rem}[thm]{Remark}
\newtheorem*{example*}{Example}
\newtheorem{example}[thm]{Example}

\newcounter{numEnonceTmpInterne} 
\newenvironment{enonce*}[1]{\theoremstyle{plain}\stepcounter{numEnonceTmpInterne} 
\def\a{enoncetmp\alph{numEnonceTmpInterne}} 
\newtheorem*{\a}{#1}\begin{\a}}{\end{\a}}

\makeatletter
\edef\@tempa#1#2{\def#1{\mathaccent\string"\noexpand\accentclass@#2 }}
\@tempa\rond{017}
\makeatother

\newcommand{\es}{\emptyset}
\renewcommand{\phi}{\varphi} 
\newcommand{\m} {^{-1}}

\newcommand {\cale} {{\mathcal {E}}}

\newcommand {\call} {{\mathcal {L}}}

\newcommand{\Fix}{{\mathrm{Fix}}}

\newcommand{\Out} {{\mathrm{Out}}}

\newcommand{\Aut} {{\mathrm{Aut}}}
\newcommand{\Inn} {{\mathrm{Inn}}}

\newcommand{\pr}{\cp^{rec}}

\newcommand{\cp}{{\sf  p}}
\newcommand{\cq}{{\sf  q}}

\usepackage{srcltx}
\usepackage[all]{xypic}

\setcounter{tocdepth}{2}

\newcommand{\Z}{{\mathbb {Z}}}

\usepackage{pdfsync}
\newcommand{\inc}{\subset}

\newcommand {\N} {{\mathbb {N}}} 

\newcommand {\R} {{\mathbb {R}}}

\newcommand {\F} {{\mathbb {F}}}

\newcommand{\bo}{\partial}

\newcommand\mii[4]{\left(\begin{array}{rr} #1&#2\\ #3&#4 \end{array}\right)}

\begin{document}

\title{A Pansiot-type subword complexity theorem for automorphisms of free groups}
\author{Arnaud Hilion and Gilbert Levitt}
\date{\today. 
}

\keywords{Word complexity, Pansiot's theorem, automorphisms of free groups, fixed points, attracting laminations} 
 \subjclass{05C25, 20E05, 20E36, 20F65, 37B10, 57M07, 68R15} 
\maketitle

\begin{abstract}
Inspired by Pansiot's work on substitutions, we prove a similar theorem for automorphisms of a free group $F$ of finite rank: if  a right-infinite word $X$ represents an attracting  fixed point of an automorphism of $F$,   the subword complexity of $X$ is equivalent to $n$, $n\log \log n$, $n\log n$, or $n^2$. The proof uses combinatorial arguments analogue to Pansiot's as well as  train tracks. We also define the recurrence complexity of $X$, and we apply it to laminations. In particular, we show that attracting laminations have complexity equivalent to $n$, $n\log \log n$, $n\log n$, or $n^2$ (to $n$ if the automorphism is fully irreducible).
\end{abstract}
  
 \section{Introduction}

   Let $\mathcal A =\{a_1,\dots,a_d\}$ be a finite set, called an alphabet.
 A finite word (on $\mathcal A$) of length $n\geq 1$ is a finite sequence $x_1\dots x_n \in \mathcal A ^n$. 
  The free monoid on $\mathcal A$ is the set $\mathcal A^* = \bigcup_{n\ge0 
 } \mathcal A^n$ consisting of all   finite words together with the empty word $\varepsilon\in \mathcal A^0$ (of length 0), 
 equipped with the concatenation law.
 Elements of $\mathcal A ^\N$ are    (right)-infinite words. Elements of $\mathcal A ^\Z$ are bi-infinite words.

 Given an infinite word $X=x_1\dots x_k \dots$ or a bi-infinite word $X= \dots x_{-1}x_0x_1\dots x_k \dots$ on a finite alphabet $\mathcal A$, its {\em complexity} (or subword complexity, factor complexity) is the function counting,   for each $n\ge1$, the number of factors (or subwords) of length $n$ in X:
 \begin{equation*}\label{eq:complexity}
 \cp_X(n)=\#\{ w\in \mathcal A^n \;|\; \exists\; i,\, \; w=x_ix_{i+1}\dots x_{i+n-1} \}.
 \end{equation*}
Similarly, given a set $E$ of words (if these words are all finite, we require that $E$ be infinite), the complexity of $E$ is:
 \begin{equation*}\label{eq:complexity}
 \cp_E(n)=\#\{ w\in \mathcal A^n \;|\; \exists x\in E, \; \exists i, \; w=x_ix_{i+1}\dots x_{i+n-1} \}.
 \end{equation*}
 
 The complexity is submultiplicative,   in the sense that $\cp_X(n+n')\le \cp_X(n)\cp_X(n')$, so that the limit
 $$h(X)=\lim_{n\rightarrow+\infty}\frac{\log \cp_X(n)}{n\log d}$$
exists: it is the well-known topological entropy of $X$.
In this sense, the complexity of a word $X$ can be seen as a finer dynamical invariant than entropy, which is  particularly relevant in the case of $0$ entropy (i.e.\ when the complexity is subexponential).

Indeed, the interest in complexity has historically focused on low complexity words: the intuition is that a word with low complexity should be ``simple''. 
For instance, the words with bounded complexity are exactly the eventually periodic words. More is true, as shown by Hedlund and Morse \cite{morse} :  $X\in \mathcal A^\N$ is eventually periodic if and only if there exists $n\in\N$ such that  $\cp_X(n)\le n$.
 
Just one step further, we find the Sturmian words. These   have been largely studied (see for instance \cite[chapter 2]{Lothaire}, \cite{Series}) as they appear as the natural coding of lines in the plane $\R^2$ with irrational slope: such a line cuts the entire grid (joining the points of $\Z^2$ by horizontal and vertical lines), giving rise to a bi-infinite word in the letters $h$ (horizontal) and $v$ (vertical). 
In terms of complexity \cite{morse}, a word $X\in \mathcal A^\Z$ is Sturmian if and only if $\cp_X(n)=n+1$ for all $n\in \N$.

This result has motivated an   abundant literature on words of low complexity -- see for instance \cite{Allouche,Cassaigne-Nicolas-in-CANT, Ferenczi} and references therein --
with two kind of approaches: 
on one side determining the kind of functions that can appear as the complexity of a word
-- see for instance \cite{Cassaigne_Kyoto-2002, Ferenczi,  Mauduit-Moreira} -- 
and on the other side developing methods for, given a word (whether arising from    arithmetics, dynamical systems, combinatorics, and so on), computing (or estimating) its complexity
-- see for instance \cite{Cassaigne_Bull-Belg-97}.
 Examples thus play  a central role in these investigations.

In the present paper, we are interested in a result of Pansiot \cite{pansiot} about  
   infinite words obtained by iterating a non-erasing substitution on a letter.
 
A substitution $\sigma$ on a finite alphabet $\mathcal A$ may be viewed as an endomorphism of the free monoid $\mathcal A^*$. Concretely, $\sigma$ is defined by the data $\sigma(a)\in\mathcal A^*$ for all $a\in\mathcal A$. It is non-erasing if $\sigma(a)\neq\varepsilon$ for all $a\in\mathcal A$. 

If $\sigma$ is non-erasing, then, for all $a\in\mathcal A$ such that $\sigma(a)$ has length at least 2 and the first letter of $\sigma(a)$ is $a$, 
the word $\sigma^k(a)$ is a prefix of $\sigma^{k+1}(a)$ and the length of $\sigma^k(a)$   grows to infinity.
Hence the sequence $\sigma^k(a)$ converges to an infinite word $X\in\mathcal A^*$ (of which every $\sigma^k(a)$ is a prefix). 

Pansiot's theorem asserts that the complexity of such an $X$ is bounded, linear (equivalent to $n$), quadratic (equivalent to $n^2$), equivalent to $n\log n$, or equivalent to $n\log \log n$ 
(in this setting, ``$f(n)$ equivalent to $g(n)$'' means that there exist two positive constants $c_1$ and $c_2$ such that  
$c_1g(n) \leq f(n) \leq c_2 g(n)$  for all $n$).
This result strengthens   
a previous result of  Ehrenfeucht, Lee, and Rozenberg \cite{Ehrenfeucht-Lee-Rozenberg} who showed,  in terms of D0L systems,  that the complexity of $X$ is at most quadratic (the total complexity introduced in Definition \ref{totcomp} is similar to the complexity of D0L languages).
We refer to \cite{Cassaigne-Nicolas-in-CANT} for a detailed proof of Pansiot's theorem and a lot of complements.

 The main purpose of this paper is to prove the analogue of Pansiot's theorem for  arbitrary automorphisms of a free  group  $F$ of finite rank (its main results were announced in \cite{turku}). 
The obstruction to  extending the result to injective endomorphisms of free groups is the lack of a theory of train tracks for them (see the work of Mutanguha \cite{Mut} in this direction, however).

 There are several key differences between a free monoid and a free group. In particular, a free group $F$ is much more flexible: a free monoid   has finitely many automorphisms (one may only permute the generators), while the automorphism group $\Aut(F)$ is a big group which is currently under active investigation (see \cite{vogicm} for instance).

 In fact, one can view a free group $F$ as an abstract object rather than a set of words. This is analogous to linear algebra, where one  studies $n$-dimensional vector spaces over $K$ rather than just $K^n$.  With this point of view, our Pansiot-type theorem appears as a theorem about an element $X$ belonging to  the boundary $\bo F$ of $F$   which is fixed under the action of an automorphism.
 
  One may view $\bo F$ as the space of ends of $F$, or its Gromov boundary,    see for instance 
    \cite{BridHa,cooper,  GH,Loh}. 
  Topologically, $\bo F$ is  
homeomorphic to 
a Cantor set, and $F\cup\bo F$ should be viewed as a compactification of $F$ (equipped with a word metric).
 There is a natural   action of $\Aut(F)$ on $F\cup\bo F$ by homeomorphisms   (see \cite{cooper}).

Algebraically, we choose a free basis $A$ of $F$.   Elements of $F$ are reduced  words on the alphabet $E=A\cup A\m$   (reduced means that no letter is followed by its inverse). Elements $X$ of $\bo F$ are represented by reduced right-infinite  words $X_A$. A sequence   in $F\cup\bo F$ converges if, for each integer $s$, the $s$-prefixes converge.
 
   Given $X\in\partial F$, the complexity of the word $X_A$ representing it in the basis $A$ depends on $A$, but in a very controlled way. If $B$ is another basis,   the complexity functions    $\cp_A$, $\cp_B$ of $X_A$, $X_B$ are \emph{equivalent} (denoted $\cp_A\sim \cp_B$)  in the following sense: there exists  a positive integer $C $ such that $\cp_A(n)\le C\cp_B(Cn)$ and  $\cp_B(n)\le C\cp_A(Cn)$ for all $n$ (Proposition   \ref{compeq}). This basic fact may also be expressed as saying that two right-infinite words $X_A$, $Y_A$ which differ by an automorphism $\alpha$ of $F$ (i.e.\ $Y_A=\alpha(X_A)$) have equivalent complexities.
 
  The complexity $\cp_X$ of an element $X\in\bo  F$ is therefore well-defined up to equivalence.   In particular, it makes sense to say that the complexity of $X$ is linear (equivalent to $n$), quadratic (equivalent to $n^2$), equivalent to $n\log n$, equivalent to $n\log \log n$.

   If $\alpha\in\Aut(F)$ and $X\in\bo F$ is the limit of a sequence $\alpha^p(g)$, with $g\in F$, as $p\to+\infty$, as is the case in Pansiot's situation, then $X$ is fixed by $\alpha$.

 Conversely, if $X\in\bo F$ is fixed by $\alpha$, there are several possibilities (see \cite{GJLL}).  First, $X$ may belong to the boundary of the fixed subgroup $\Fix\ \alpha=\{g\in F\mid \alpha(g)=g\}$. This subgroup may have rank 2 or more, and there is no hope of saying anything general about $\cp_X$ in this case. If $X\notin\bo\Fix\ \alpha$, however, it is either attracting or repelling   for the action of $\alpha$ on $F\cup \bo F$ (see \cite{GJLL}). 
  
 We can now state our main result. The various possibilities are illustrated in Section \ref{examp}.

 \begin{thm}[Theorem \ref{main}] \label{mainintro2}
 If $X\in\bo F$ is fixed by $\alpha$ and $X\notin  \bo\Fix\ \alpha$,
 then the  complexity $\cp_X$ of $X$ is   linear, quadratic, equivalent to $n\log n$, or equivalent to $n\log \log n$ (bounded cannot occur).
\end{thm}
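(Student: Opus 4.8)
The plan is to combine the structure theory of $\Aut(F)$ (train tracks) with Pansiot's combinatorics \cite{pansiot}, using the former to reduce the statement to a stratified version of his theorem on iterated substitutions. I would first normalize $\alpha$: replacing $\alpha$ by $\alpha\m$ leaves $X$ fixed, leaves $\cp_X$ unchanged, and turns a repelling fixed point into an attracting one, so by \cite{GJLL} I may assume $X$ is attracting; and replacing $\alpha$ by a power again leaves $X$ fixed and leaves the word $X_A$, hence $\cp_X$, literally unchanged, so I may assume $\alpha$ is realized by a completely split relative train track map $f\colon\Gamma\to\Gamma$ on a marked graph $\Gamma$ collapsing no edge, so that $f$ acts as a non-erasing ``substitution'' on the edge alphabet $E(\Gamma)$. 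Since $X$ is fixed, it is represented by an $f$-fixed ray (an eigenray): there is an edge $e$ with $f(e)=e\cdot u$ and $X$ is the endpoint in $\bo F=\bo\widetilde\Gamma$ of $\rho=\lim_p f^p(e)=e\,u\,f(u)\,f^2(u)\cdots$, the train-track property (legal turns stay legal) ensuring that this concatenation is reduced with no cancellation between consecutive blocks $f^p(u)$. Reading $\rho$ in $E(\Gamma)$ gives a right-infinite word $X_\Gamma$ that is a genuine fixed point of the substitution $f$, and $\cp_X\sim\cp_{X_\Gamma}$ since a homotopy equivalence between $\Gamma$ and the rose on $A$ is a quasi-isometry and alters complexity only boundedly --- the graph version of Proposition \ref{compeq}, proved the same way. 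The feature absent from Pansiot's flat setting is that $E(\Gamma)$ is organized in strata (exponential, NEG, zero) and that $f$ rewrites edges of one stratum through the lower strata.

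Next I would estimate $\cp_{X_\Gamma}$ through right-special factors, as in Pansiot's proof: $\cp_{X_\Gamma}(n+1)-\cp_{X_\Gamma}(n)$ is the number of right-special factors of $\rho$ of length $n$, each weighted by the number of its distinct one-letter right extensions minus one, so $\cp_{X_\Gamma}(n)$ equals, up to an additive $O(n)$, the partial sum of these weighted counts. The crux is to prove that every sufficiently long right-special factor of $\rho$ is, up to bounded-length modification at its two ends, an iterate $f^j(v)$ of one of finitely many ``seeds'' $v$, the seeds arising from the finitely many turns at vertices of $\Gamma$ together with the local behaviour of $f$ there (its gates and the illegal turns of the relevant strata). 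Counting right-special factors of length $\asymp n$ then reduces to counting the pairs $(v,j)$ that contribute at scale $n$, a purely arithmetic question about the functions $j\mapsto\abs{f^j(v)}$ which the train-track stratification settles.

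For that arithmetic one mimics Pansiot's case analysis. If every stratum used to build $\rho$ is exponential, $X_\Gamma$ is a ``growing'' fixed point and $\cp_X$ is equivalent to $n$ when a single Perron--Frobenius eigenvalue governs all those strata (the analogue of the quasi-uniform case) and to $n\log\log n$ when several distinct growth rates occur; if some stratum in play is NEG or zero, $X_\Gamma$ behaves like a non-growing fixed point and $\cp_X$ is equivalent to $n\log n$ or to $n^2$ according to whether the polynomially-growing part is of a mild type or has genuinely nested levels. The quadratic upper bound holds in all cases for free: the D0L language $\{f^p(e)\}_p$ has complexity $O(n^2)$ by \cite{Ehrenfeucht-Lee-Rozenberg} (essentially the total complexity of Definition \ref{totcomp}), and $\cp_{X_\Gamma}$ is dominated by it. Finally, bounded complexity cannot occur: it would force $X_\Gamma$, hence $X$, to be eventually periodic, so $\rho$ would eventually wind around a loop whose conjugacy class is $\alpha$-periodic, and the $\alpha$-orbit of that class then yields an element of $\Fix\ \alpha$ with the same end as $X$, forcing $X\in\bo\Fix\ \alpha$ (cf.\ \cite{GJLL}), against the hypothesis.

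I expect the main obstacle to be the special-factor count in the stratified setting. In Pansiot's situation the substitution acts on a flat alphabet, whereas here $f$ rewrites an edge of an exponential stratum through lower strata whose own growth may be polynomial or trivial; keeping track of exactly which seeds $v$ and which generations $j$ produce special factors of a given length --- and ruling out the cancellations and extra extensions that the flat case never sees --- is where the train-track axioms (legality, the structure of completely split paths, the finiteness of gates) must do the real work, and is also where the thresholds separating the four regimes are pinned down.
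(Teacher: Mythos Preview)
Your overall architecture---normalize so that $X$ is attracting, realize a power of $\alpha$ by a completely split train track $f$, represent $X$ by an eigenray $\rho=e\cdot u\cdot f_\sharp(u)\cdots$, then count---matches the paper's, but the combinatorial core is miswired in two places.

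First, the case analysis is not the right one and the outcomes are misattributed. The dichotomy is not ``all strata in play are EG'' versus ``some NEG or zero stratum appears'': an NEG edge $e$ with $f(e)=eu$ grows exponentially whenever $u$ does, and zero strata only contribute bounded connecting paths. The correct split (Proposition~\ref{comptot}) is whether every \emph{growing term} of the complete splitting of $\rho$ grows exponentially, or whether a linear edge or an exceptional path $eu^p\bar e'$ occurs. In the first case the complexity is $n$, $n\log\log n$, or $n\log n$ according to whether the growth types $n^d\lambda^n$ of the growing terms all coincide, share a common $\lambda$ but differ in $d$, or involve distinct $\lambda$'s. You have the last two reversed: two EG strata with different Perron--Frobenius eigenvalues give $n\log n$, while $n\log\log n$ requires the same $\lambda$ with different polynomial prefactors---which in fact forces an NEG edge over an EG stratum. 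In the second case the complexity is always exactly quadratic, regardless of how many ``nested levels'' there are.

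Second, and this is the genuine obstacle, you cannot run Pansiot's special-factor count on $f$ viewed as a substitution on $E(\Gamma)$, nor on any fixed finite alphabet of ``seeds''. The terms of a complete splitting include INPs $eu^p\bar e$ and exceptional paths $eu^p\bar e'$ with $p$ unbounded; these are irreducible pieces of arbitrarily large length, so the set of seeds you would need is infinite. The paper handles the two cases by entirely different methods. When no linear edge or exceptional path appears, all terms have bounded length, the alphabet of terms \emph{is} finite, and one may invoke Pansiot literally (Section~\ref{lbd}). When they do appear, Pansiot is abandoned: the quadratic lower bound comes from a direct construction (Lemma~\ref{entour}) exploiting the linear drift of the exponent in $eu^{p}\bar e'$, and the upper bound uses the tower $w_0,w_1,\dots,w_p$ together with stabilization of prefixes (Lemma~\ref{stabpref}) rather than special factors. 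Your final paragraph senses that something extra is needed but does not identify exceptional paths as the specific obstruction or supply the replacement arguments. A smaller gap: producing the eigenray for a \emph{given} attracting $X$ uses Lemma~4.36(2) of \cite{recogn}, which needs $\alpha$ principal; the paper secures this by passing to $F*\F_2$ with $\alpha$ extended by the identity (Lemma~\ref{morceaux}).
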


\begin{rem}
  In this statement, equivalence may be understood as defined above, or as Lipschitz-equivalence: two functions $f$ and $g$ are Lipschitz-equivalent if there exists $C>0$ such that $\frac 1 C\le \frac{f(x)}{g(x)}\le C$ for all $x$ (see Remark \ref{equiv2}).
\end{rem}

\begin{cor}\label{mainintro}
 Let $\alpha$ be an automorphism of a finitely generated free group $F$, and $g\in F$.    If  $\alpha^p(g)$ converges to an element $X\in \bo F$ as $p\to+\infty$, then the  complexity $\cp_X$ of $X$ is bounded, linear, quadratic, equivalent to $n\log n$, or equivalent to $n\log \log n$.
\end{cor}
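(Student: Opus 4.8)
The plan is to deduce the corollary from Theorem \ref{mainintro2}; the only substantial point is the case where $X$ lies in $\bo\Fix\alpha$, which Theorem \ref{mainintro2} deliberately excludes. The first, elementary, observation is that $X$ is a fixed point of the canonical extension $\wh\alpha$ of $\alpha$ to $F\cup\bo F$: applying the homeomorphism $\wh\alpha$ to $\alpha^p(g)\to X$ gives $\alpha^{p+1}(g)\to\wh\alpha(X)$, and since $(\alpha^{p+1}(g))_p$ is a tail of the same convergent sequence it also tends to $X$, whence $\wh\alpha(X)=X$ (this is already remarked in the introduction).

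Following \cite{GJLL}, there are now two cases. If $X\notin\bo\Fix\alpha$, Theorem \ref{mainintro2} applies verbatim and $\cp_X$ is linear, quadratic, equivalent to $n\log n$, or equivalent to $n\log\log n$ --- four of the five alternatives, the fifth (bounded) being precisely the one excluded by Theorem \ref{mainintro2} in that case. So everything reduces to the case $X\in\bo\Fix\alpha$, where I claim $\cp_X$ is bounded; equivalently, by Morse--Hedlund (\cite{morse}), that $X$ is an eventually periodic word.

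To prove this one must use essentially the hypothesis that $X$ is the limit of a \emph{forward} orbit: as recalled in the introduction, an arbitrary point of $\bo\Fix\alpha$ can have complexity of any reasonable order, so Theorem \ref{mainintro2} is of no help. The mechanism is that $\wh\alpha$ restricts to the identity on $\bo\Fix\alpha$ and hence carries no attracting dynamics there, so a forward orbit $\alpha^p(g)$ can accumulate on $\bo\Fix\alpha$ only ``along a rational direction''; concretely, once a prefix of $\alpha^p(g)$ lies in $\Fix\alpha$ it is fixed by $\alpha$ from then on, so the growth feeding into $X$ is produced by iterating $\alpha$ on a bounded chunk at the frontier, which yields a periodic tail $c^\infty$ with $c\in\Fix\alpha$ --- making $X$ an endpoint of the translation axis of an element of $\Fix\alpha$ (up to a finite prefix), hence eventually periodic. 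I would carry this out either via the classification of fixed points of $\wh\alpha$ in \cite{GJLL}, or directly by a bounded--cancellation / finite--state argument on the reduced words $\alpha^p(g)$. Controlling that frontier chunk --- i.e.\ ruling out that $\alpha^p(g)$ converges to an aperiodic point of $\bo\Fix\alpha$ --- is the step I expect to be the main obstacle; the rest is bookkeeping.
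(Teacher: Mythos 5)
Your overall decomposition matches the paper's: you reduce everything to Theorem~\ref{mainintro2} except the case $X\in\bo\Fix\alpha$, where you claim $X$ must be eventually periodic (hence of bounded complexity by Morse--Hedlund). This is the right dichotomy, and you correctly observe that it must use the hypothesis that $X$ is a \emph{forward-orbit limit}, since a general point of $\bo\Fix\alpha$ can be arbitrary.

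However, the key claim is not actually proved, and you acknowledge as much. The paper handles it by invoking a cited theorem (Th\'eor\`eme~2.15 of~\cite{these}): any $X=\lim_{p\to\infty}\alpha^p(g)$ is either rational (of the form $h^\infty$) or attracting. That dichotomy immediately closes your case: if $X\in\bo\Fix\alpha$ then $X$ cannot be attracting (by~\cite{GJLL}), so it is rational, hence ultimately periodic. Your proposed ``mechanism'' --- that ``once a prefix of $\alpha^p(g)$ lies in $\Fix\alpha$ it is fixed by $\alpha$ from then on, so the growth feeding into $X$ is produced by iterating $\alpha$ on a bounded chunk at the frontier'' --- does not hold up as stated: applying $\alpha$ to a decomposition $\alpha^p(g)=u_p v_p$ with $u_p\in\Fix\alpha$ gives $u_p\,\alpha(v_p)$ \emph{before} reduction, and cancellation between $u_p$ and $\alpha(v_p)$ can destroy the prefix; moreover there is no a priori reason that any prefix of $\alpha^p(g)$ lands in $\Fix\alpha$ at all. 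So the ``bounded chunk at the frontier'' picture is a heuristic, not an argument. To make the step rigorous one genuinely needs the classification of fixed points and basins in~\cite{GJLL} or the forward-orbit dichotomy from~\cite{these}; without citing (or reproving) one of these, the $X\in\bo\Fix\alpha$ case remains open in your proof.
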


One may be more precise, under additional assumptions on $\alpha$ (see Section \ref{cfp} for definitions).
 
\begin{cor}[Corollary \ref {cassimples}] \label{cassimplesintro}  Under the assumptions of   Theorem \ref{mainintro2} or Corollary \ref{mainintro}:

\begin{itemize}
 
\item
If $\alpha$ is fully irreducible, the complexity of $X$  is linear.
\item
If $\alpha$ is atoroidal, the complexity of $X$ cannot be quadratic (it is equivalent  to $n$, $n\log n$, or   $n\log \log n$).

\item
 If $\alpha$ is polynomially growing, the complexity of $X$ is  bounded or quadratic.  
 \end{itemize}
\end{cor}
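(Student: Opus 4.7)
The strategy is to apply Theorem~\ref{mainintro2} (or Corollary~\ref{mainintro}) and then eliminate specific complexity regimes from the list $\{\text{bounded},\ n,\ n\log\log n,\ n\log n,\ n^2\}$ using the algebraic hypothesis on $\alpha$ together with the structure of a relative train track representative of $\alpha$ in the sense of Bestvina--Handel and Bestvina--Feighn--Handel. The proof of Theorem~\ref{mainintro2} will have classified each complexity regime according to a particular configuration of strata and letters in such a representative; for each of the three bullets, it remains to check which configurations are still admissible and which are not.

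I would first treat the fully irreducible case. Up to replacing $\alpha$ by a positive power, one may choose a train track representative with a single exponential stratum whose transition matrix is primitive. Reading off $X_A$ under iteration of $\alpha$ is then formally analogous to iterating a primitive substitution on a letter, and the classical Pansiot/Moss\'e-type argument (see \cite{pansiot, Cassaigne-Nicolas-in-CANT}) shows that the subword complexity of such a fixed word is linear. By basis-invariance of the complexity class (Proposition~\ref{compeq}), $\cp_X$ is itself linear.

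The atoroidal and polynomially growing cases I would handle together, as opposite extremes. An atoroidal automorphism has no periodic conjugacy class, hence no non-trivial polynomially growing invariant subgraph, so in any relative train track representative every non-trivial stratum is exponential. In the proof of Theorem~\ref{mainintro2}, the quadratic regime $n^2$ should arise precisely when $X_A$ reads letters coming from a polynomial stratum stacked above another polynomially growing sub-stratum, producing two independent polynomial directions of growth; in the absence of any polynomial stratum this is impossible, so $n^2$ is ruled out. Symmetrically, a polynomially growing $\alpha$ has no exponential stratum, and the three intermediate regimes $n$, $n\log n$, $n\log\log n$ each require, at some step of the argument, an exponential stratum to generate new factors with the correct density; they are therefore excluded, leaving only bounded or quadratic.

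The main difficulty I expect is bookkeeping: precisely matching the four combinatorial ``signatures'' used in the proof of Theorem~\ref{mainintro2} (one per complexity regime) with the stratum structure of the relative train track. In particular one must verify that the regimes $n\log n$ and $n\log\log n$ genuinely require some exponential stratum, and that the quadratic regime genuinely requires two nested polynomial strata --- as opposed, for instance, to a polynomial stratum sitting above an exponential one. This is a careful reading of the Pansiot-style case analysis performed for Theorem~\ref{mainintro2}, rather than an independent argument.
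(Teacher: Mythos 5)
Your overall strategy matches the paper's: appeal to the case classification underlying Theorem~\ref{mainintro2} (namely Proposition~\ref{comptot}, which classifies the complexity according to how the growing terms of the complete splitting diverge) and then check which cases the hypothesis on $\alpha$ leaves open. For fully irreducible $\alpha$, the paper simply observes that $G$ consists of a single EG stratum so all growing terms grow at the same rate $\lambda^n$; this is the non-divergent case of Proposition~\ref{comptot}, whence linearity. Your invocation of Pansiot/Moss\'e for a primitive substitution is morally the same idea, but recall the paper's warning that one cannot simply view $\alpha$ as a substitution because of cancellation and INP's (the Tribonacci inverse example); the cleaner route is just through Proposition~\ref{comptot}. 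For polynomially growing $\alpha$, your reasoning matches the paper (no exponentially growing terms rules out case (2), leaving bounded or quadratic).

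For the atoroidal case, however, your stated characterization is off. It is not true that ``in any relative train track representative every non-trivial stratum is exponential'': atoroidal automorphisms can perfectly well have NEG strata, e.g.\ an edge $e$ with $f(e)=eu$ where the loop $u$ grows exponentially; this produces an NEG (not EG) stratum with exponential growth. Nor does the quadratic regime require ``a polynomial stratum stacked above another polynomially growing sub-stratum''; in the proof (see Lemma~\ref{entour}) the quadratic lower bound comes from a single \emph{linear edge or exceptional path} $\theta$ sandwiched between growing subpaths, and the surrounding growth need not be polynomial. The precise argument used in the paper is that a linear edge $e$ (with $f(e)=eu$, $u$ pre-Nielsen) or an exceptional path $eu^p\bar e'$ would force the loop $u$ to define a fixed conjugacy class, contradicting atoroidality. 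Hence no growing term is polynomially growing, so case~(3) of Proposition~\ref{comptot} cannot occur. Your conclusion is right, but you should replace the stratum-based formulation by the linear-edge/exceptional-path criterion to make the argument valid.
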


Let us now give the main ideas of the proof of   Theorem   
\ref{mainintro2}.
 Unfortunately,
 one cannot prove  it  by simply viewing $\alpha$ as a substitution on $A\cup A\m$ and applying Pansiot's theorem, because of \emph{cancellation}. 
 
Indeed, cancellation is another key difference between free monoids and free groups. When applying $\alpha$ to a (finite or infinite) word, one replaces each letter by its image, but one also has to make the word reduced, by successively cancelling all subwords of the form $\beta\beta\m$ or $\beta\m\beta$ with $\beta\in A$ (the order in which the removals are performed is irrelevant).

As a example, 
consider the so-called {\em Tribonacci} substitution on the alphabet $\{a,b,c\}$, given by $\sigma(a)=ab$, $\sigma(b)=ac$, $\sigma(c)=a$. It defines a fully irreducible automorphism $\alpha$ of the free group $F(a,b,c)$,  whose cube fixes the infinite word $$X=\lim_{p\to+ \infty} \alpha^{-3p}(a\m)=a^{-1}ba^{-1}bc^{-1}b a^{-1}ba^{-1}bc^{-1}b c^{-2}bc^{-1}b\dots$$ 
This word is a repelling fixed point of  $\alpha^3$, and a lot of cancellation occurs when $\alpha$ is applied to it.
By Corollary \ref{cassimplesintro}, the complexity of $X$ is linear.

 From a technical point of view, here are two typical exemples (known as INP's and exceptional paths) which do not appear when considering substitutions    and force us to use different techniques. 
 
 Consider the automorphism  $\alpha_1$ of the free group $F(a,b)$ on two generators $a,b$ sending $a$ to $ab$ and $b$ to $bab$. It sends the commutator $[a,b]=aba\m b\m$ to $abbab b\m a\m b\m a\m b\m$, which reduces to $aba\m b\m$: the element $[a,b]$ is fixed. 
 Now consider the automorphism $\alpha_2$ of $F(a,b,c)$ sending $a$ to $a$, $b$ to $ba$, and $c$
 to $ca^2$. Applying powers of $\alpha_2$ to  $bc\m$  sends it to $ba\m c\m$, then to $ba^{-2} c\m$, $ba^{-3} c\m$,...  Note that $ba^{-k}c\m$ is an infinite sequence of words which cannot be written as a concatenation $uv$ such that no cancellation occurs between $\alpha_2(u)$ and $\alpha_2(v)$.
 
   \emph{Exceptional paths} such as $ba^{-k}c\m$, with $k$ unbounded,  are the key obstruction to applying Pansiot's theorem directly. In Subsection \ref{lbd}, exceptional paths will be ruled out and we will be able to use substitutions to shorten the    proof. This technique was first used by R.\ Gupta \cite{Gup} to estimate the frequency of a path $\gamma$. Note, however, that the substitution $\zeta_\gamma$ that she uses depends on  $\gamma$.
 
 The standard way to control cancellation is to use \emph{train tracks}.  
 Train tracks for automorphisms of free groups were introduced by Bestvina-Handel \cite{BHtrain} and improved by Bestvina-Feighn-Handel \cite{Tits1}. We use the \emph{completely split train tracks} constructed by Feighn-Handel \cite{recogn}. See Section \ref{cstt} for definitions, and \cite{Bogop} for examples of train tracks.

 Any automorphism of a free group may be represented by a map $f$ from a graph   to itself. For instance, $\alpha_1$ above is represented on a rose with two petals labelled $a$ and $b$ (see Figure \ref{theta graph} below). The map $f$ wraps the first half of the edge labelled $a$ over   the whole edge and the second half over the edge labelled $b$. It sends the $b$-edge to a path consisting of 3 edges ($a$, then $b$, then $a$).

The idea of  train tracks is to represent a  given   automorphism $\alpha$ by a continuous map $f$ on a graph $G$ with possibly more than one vertex ($f$ represents $\alpha$ in the sense that we can identify the fundamental group of $G$ with $F$ so that  $\alpha$ is the automorphism induced by $f$). The map sends each vertex to a vertex, each edge to a path consisting of one or more edges. The point is to construct   $f$   in such a way that as little cancellation as possible occurs when powers of $f$ are applied to a given edge.

Using \cite{recogn}, we   represent $\alpha$ by  a completely split train track map $f:G\to G$. Any $X\in\bo F$ is represented by a \emph{ray} $\rho$ in $G$: a locally injective map from $[0,\infty)$ to $G$ sending each interval $[p,p+1]$ onto an edge. 
Up to equivalence, the complexity of $X$ may be computed by counting the  complexity of $\rho$, i.e.\  the number of subpaths   of length $n$   (see Lemma \ref{crayon}).

Proposition \ref{comptot} states that the   complexity of $\rho$   is bounded or satisfies the conclusion of Theorem  \ref{mainintro2}. Its proof  combines combinatorial arguments used by Pansiot and geometric arguments using the properties of completely split train tracks. Theorem    \ref{mainintro2} 
 follows fairly directly from Proposition \ref{comptot},    see Section \ref{cfp}.

In Section \ref{reccom} we  also associate to any $X\in\bo F$ a \emph{recurrence complexity} $\pr_X$, by only counting words of length $n$ that appear infinitely often in $X$, and we  show:

\begin{thm}[Theorem \ref{compar3}]
  If $X$ is as in Theorem  \ref{mainintro2} or   Corollary  \ref{mainintro}, its recurrence complexity $\pr_X$ is bounded,  linear, quadratic, equivalent to $n\log n$, or equivalent to $n\log \log n$.  If $\pr_X$ is not quadratic, it is equivalent to $\cp_X$.
\end{thm}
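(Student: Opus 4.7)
Plan: The inequality $\pr_X \le \cp_X$ is immediate from the definitions, so Theorem \ref{mainintro2} together with Corollary \ref{mainintro} give that $\pr_X$ is bounded above (up to a multiplicative constant) by one of $1$, $n$, $n\log\log n$, $n\log n$, $n^2$. It remains to show (i) that $\pr_X$ lies in one of these five equivalence classes and not between them, and (ii) that $\pr_X \sim \cp_X$ as soon as $\pr_X$ is not quadratic.

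The starting point is the train track representation used to prove Proposition \ref{comptot}: a completely split train track map $f : G \to G$ for a suitable power of $\alpha$, and a ray $\rho$ in $G$ representing $X$. By Lemma \ref{crayon}, the count of distinct subpaths of $\rho$ of a given length is equivalent to $\cp_X$, and the analogous count restricted to subpaths appearing at positions tending to infinity along $\rho$ is equivalent to $\pr_X$. The ray $\rho$ decomposes canonically as an infinite concatenation of splitting units whose types, under iteration of $f$, cycle through a finite list. This self-similarity implies that every splitting unit type occurring beyond some bounded initial segment of $\rho$ recurs infinitely often along $\rho$, so any subpath contained entirely in a recurrent unit type is itself recurrent.

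The non-recurrent subpaths of $\rho$ are therefore those that meet at least one splitting unit of an \emph{exceptional} type, occurring only within a bounded initial portion of $\rho$. These are precisely the features produced by Nielsen-type configurations, such as the exceptional paths $ba^{-k}c^{-1}$ of the $\alpha_2$ example in the introduction: they arise only in the presence of non-periodic linear edges or their higher-order polynomial analogues in the train track. In the purely exponential strata (those generating the $n\log n$ and $n\log\log n$ behaviour), and in the Sturmian-like linear regime (for instance the fully irreducible case of Corollary \ref{cassimplesintro}), every splitting unit of $\rho$ is recurrent, so $\cp_X - \pr_X$ is asymptotically negligible with respect to $\cp_X$ and $\pr_X \sim \cp_X$. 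In the quadratic regime, by contrast, the exceptional units may contribute a family of subpaths of length $n$ genuinely quadratic in $n$ to $\cp_X-\pr_X$, while $\pr_X$ itself falls back to one of the smaller growth classes.

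The main obstacle, as in the proof of Proposition \ref{comptot}, will be the precise accounting of subpaths that straddle splitting unit boundaries in the presence of exceptional paths. I would carry out the same case split (exponential strata, linear strata with and without nesting, polynomial strata of higher order) used to derive the complexity classification, and in each case read off the recurrent part directly from the train track decomposition; this both identifies the equivalence class of $\pr_X$ and establishes the stated dichotomy between the quadratic and non-quadratic regimes.
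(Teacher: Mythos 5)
Your plan has the right architecture (pass to a completely split train track, restrict attention to the splitting units of the ray $\rho$, and argue that the quadratic case is exactly where recurrence and full complexity diverge), but there is a genuine gap at the step where you claim that recurrence of individual splitting unit \emph{types} implies recurrence of long subpaths. A subpath of length $n$ typically straddles many units, so knowing each unit type recurs does not by itself yield recurrence of the specific concatenation you are looking at. In the quadratic example of the paper ($a\mapsto a$, $b\mapsto ba$, $c\mapsto cb$) the obstruction is precisely this: all letters appear infinitely often as units, yet most subpaths involving $b$ appear only once. This is the ``precise accounting'' you defer to at the end, but without it there is no proof.

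The paper resolves this differently, and more sharply. For the first assertion it does not redo the case analysis of Proposition \ref{comptot}: it invokes Proposition \ref{comprec} (recurrence complexity of paths lies in the five classes) and Lemma \ref{compar} (which identifies $\pr_X$ with $\pr_\gamma$ for $\gamma=\sigma.f_\sharp(\sigma).f^2_\sharp(\sigma)$, and also gives $\pr_X\gtrsim n$). For the second assertion, the key step is not ``exceptional units only occur near the origin'' — in fact under the hypothesis $\cp_X$ not quadratic there are no linear edges or exceptional paths at all, so every growing term is exponential. One then uses Lemma \ref{long} to get a uniform bound $K$ on non-growing full subpaths and merges terms into growing pieces of length $\le 3K$. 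Truncating $\rho$ so that every pair $b_jb_{j+1}$ of consecutive pieces that still appears does so infinitely often (a finiteness argument), and observing that the images $f^k_\sharp(b_jb_{j+1})$ cover the truncated ray with overlaps, one concludes that every subpath of $\rho$ is, up to a linear error, contained in some $f^k_\sharp(b_jb_{j+1})$ — and these recur because $b_jb_{j+1}$ does. The exponential growth of the pieces is essential here: it is what makes the contribution of the boundaries only linear, echoing the computation in Lemma \ref{compar}. Your proposal would need to import this merging-into-growing-pieces construction and the overlapping-covers argument to close the gap.
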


On the other hand,
if
  $\alpha$ is the automorphism of $\F_3$ defined by $a\mapsto a,b\mapsto  ba,c\mapsto  cb$, the fixed point $X=\lim_{p\to\infty}\alpha^p(c)=cbbaba^2\cdots$   has quadratic complexity but linear recurrence complexity  (see Example \ref{rcdif}). 
  
One may associate to any $X\in \bo F$ a \emph{lamination} $L_X$ (see Section \ref{lami}), and $\pr_X$ is the complexity of $L_X$.  
  Theorem  \ref{mainintro2} and   Corollary  \ref{mainintro} thus control the complexity of $L_X$, for $X$ fixed by $\alpha$. This applies in particular to the attracting laminations constructed by Bestvina-Feighn-Handel in \cite{Tits1}.

\begin{thm} [Corollary \ref{corlam}]
 Let $\Phi\in\Out(\F_n)$.
 The complexity of any attracting lamination of   $\Phi$ is linear, quadratic, equivalent to $n\log n$, or equivalent to $n\log \log n$. It is linear if $\Phi$ is fully irreducible, at most $n\log n$ if $\Phi$ is atoroidal.
\end{thm}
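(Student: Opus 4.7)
The strategy is to reduce the statement to Theorem~\ref{compar3} applied to an attracting fixed point $X\in\partial F$ whose associated lamination $L_X$ coincides with the attracting lamination $L$, and then to feed in the refinements of Corollary~\ref{cassimplesintro}.

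First, since $\Phi$ permutes its finitely many attracting laminations, choose $k\ge 1$ so that $\Phi^k$ fixes $L$, and pick a completely split train track map $f:G\to G$ representing $\Phi^k$ via \cite{recogn}. The Bestvina--Feighn--Handel construction \cite{Tits1} endows $L$ with a generic leaf $\ell$ which, after passing to a further iterate if necessary, can be realized as follows: there is a vertex $v$ of $G$ and two rays $\rho^-,\rho^+$ issuing from $v$ that are fixed (as unparameterized rays) by a representative $\alpha\in\Aut(F)$ of $\Phi^k$, once $\pi_1(G,v)$ is identified with $F$. These rays correspond to two attracting fixed points $X^\pm\in\partial F$ of $\alpha$; neither lies in $\partial\Fix(\alpha)$, since they are attracting. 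The bi-infinite line $\ell=\overline{\rho^-}\cdot\rho^+$ is a leaf of $L$ whose $F$-translates are weakly dense in $L$ (this is what ``generic'' means).

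Density of the $F$-translates of $\ell$ in $L$ means that a finite edge-path appears as a subpath of $L$ exactly when it appears in $\ell$, and the completely split train track structure, applied to $\rho^+$, forces each such subpath to appear infinitely often. Hence the complexity of $L$ agrees, up to equivalence, with the recurrence complexity $\pr_{X^+}$, which is the content of the identification made in Section~\ref{reccom}. Theorem~\ref{compar3} then states that $\pr_{X^+}$ is bounded, linear, quadratic, equivalent to $n\log n$, or equivalent to $n\log \log n$. The bounded case is ruled out because an attracting lamination is not carried by a periodic conjugacy class, so its generic leaf is not eventually periodic. Finally, Corollary~\ref{cassimplesintro} applied to $\alpha$ yields the two refinements: if $\Phi$ is fully irreducible then so is $\Phi^k$, forcing linear complexity; if $\Phi$ is atoroidal then so is $\Phi^k$, excluding the quadratic case and leaving at most $n\log n$.

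The main obstacle is the first step: cleanly exhibiting the generic leaf of $L$ as a concatenation of two $\alpha$-fixed rays representing attracting fixed points in $\partial F\setminus\partial\Fix(\alpha)$. This requires choosing the correct iterate $k$ so that $L$ is fixed and a completely split train track representative is available, then appealing to the Bestvina--Feighn--Handel structure theory to produce a $\Phi^k$-fixed leaf and verify it is generic. Once this dictionary between attracting laminations and attracting fixed points in $\partial F$ is set up, the theorem becomes a translation of Theorem~\ref{compar3} and Corollary~\ref{cassimplesintro} into the language of laminations.
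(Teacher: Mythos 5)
Your proof follows essentially the same route as the paper: identify the attracting lamination with $L_{X}$ for an attracting fixed point $X$ of a representative of a power of $\Phi$, then apply Theorem~\ref{compar3} (recast as Theorem~\ref{comparlam}) together with Corollary~\ref{cassimples}. The paper's Example~\ref{atlam} is the precise analogue of your step constructing the generic leaf, though it is slightly more economical: it only produces a single ray $\rho$ with $X=[\rho]$ an attracting fixed point and asserts $L_X$ is the attracting lamination, rather than exhibiting a bi-infinite generic leaf as a concatenation $\overline{\rho^-}\cdot\rho^+$ of two rays fixed by the same $\alpha$ and meeting at an $f$-fixed vertex (a stronger structural claim than is needed, and one you'd have to justify). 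Your sentence ``the completely split train track structure, applied to $\rho^+$, forces each such subpath to appear infinitely often'' glosses over the direction that subpaths of the leaf $\ell$ near the vertex $v$ are recurrent in $\rho^+$; this is exactly what the identification $L=L_{X^+}$ asserts and the paper simply takes it as a known fact from \cite{Tits1}, so this is a presentational rather than a mathematical gap. Two tiny remarks: the Section~\ref{reccom} version of Theorem~\ref{compar3} already excludes the bounded case under the hypothesis $X\notin\partial\Fix\alpha$ (so your explicit exclusion is harmless but redundant there), and for the atoroidal conclusion you should pass through the second bullet of Theorem~\ref{compar3} (that $\pr_X\sim\cp_X$ when $\cp_X$ is not quadratic) to translate the bound on $\cp_X$ from Corollary~\ref{cassimplesintro} into a bound on the lamination complexity — although $\pr_X\le\cp_X$ makes the conclusion true regardless.
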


 \bigskip

\paragraph{Acknowledgments}
We are grateful to word combinatoricians and group theorists in Marseille for the interest they have expressed in this work, and more specifically to Julien Cassaigne who introduced us to Pansiot's result and spent a lot of time explaining  its proof to us.
We thank the referees for a careful reading and wise suggestions which helped simplify Section 3.

 \section{Preliminaries}

\subsection{Examples} \label{examp}

   We first give examples illustrating the various possibilities in   Theorem \ref{mainintro2}  
   and features of its proof. They may be viewed as positive automorphisms of  $F$, or as invertible substitutions.  We selected them so that they illustrate train track maps and their strata. We refer to Sections \ref{cstt} and \ref{totc}
 for definitions.

We use as a building block the following automorphism $\omega$ of the free group $F(a,b)$ on two generators $a,b$: it sends $a$ to $ab$ and $b$ to $bab$. It is realized geometrically by a pseudo-Anosov homeomorphism of a punctured torus.  The infinite word $X=ab^2ab^2abab\cdots=\lim_{n\to\infty}\omega^n(a)$ is an attracting fixed point whose complexity is linear.  
 
The obvious map representing $\omega$ on a rose with two petals is a completely split train track map. There is one exponential  (EG) stratum, consisting of the whole graph. 
The commutator $[a,b]=aba\m b\m$ is fixed under $\omega$, it is represented by an INP.
Similarly, in the examples given below, the obvious map on a rose is a completely split train track map.

We now consider a free group $F(a,b,c,d)$ of rank 4, which we view as $F(a,b)*F(c,d)$.  The automorphism $\alpha:
{\left\{\begin{array}{rrr} a&\mapsto &{ab}\\ 
b&\mapsto &{bab}\\ 
c&\mapsto &{cd}\\ 
d&\mapsto& {dcd}\\ 
    \end{array}\right.}$ acting on each of the two free factors  as $\omega$ has two unrelated exponential strata (consisting of the $(a,b)$-edges and the $(c,d)$-edges respectively). 
    There is no divergence (in the sense of  Definition \ref{diver}): 
    under iteration of $\omega$, all four letters $a,b,c,d$ grow like $\lambda^n$, with $\lambda$ the largest eigenvalue of the matrix $\mii1112$.  By this we mean that, for $x\in\{a,b,c,d\}$, the length of $\omega^n(x)$ is equivalent to $\lambda^n$ as $n\to+\infty$. Just as above, the fixed points $X=\lim_{n\to\infty}\alpha^n(a)$ and $Y=\lim_{n\to\infty}\alpha^n(c)$ have linear complexity.

We now create divergence by making the image of the $(a,b)$-stratum  go through the $(c,d)$-stratum. Define 
$\alpha_{1,1} :{\left\{\begin{array}{rrr} 
a&\mapsto &{ab}\\ 
b&\mapsto &{b}{\color{red}\ c}\ ab\\ 
c&\mapsto &{cd}\\ 
d&\mapsto& {dcd}\\ 
    \end{array}\right.}  $. Because of the letter $c$ in the image of 
     $b$,
     the two strata interact. The letters $a$ and $b$ grow like $n\lambda^n$, whereas $c$ and $d$ still grow like $\lambda^n$. There is polynomial divergence, and the attracting fixed point $\lim_{n\to\infty}\alpha_{1,1}^n(a)$ has complexity $n\log\log n$  (up to equivalence).
    
    To create exponential divergence,  we use $\omega^2$ in the $(a,b)$-stratum. Define  
$$\alpha_{2,1}:{\left\{\begin{array}{rrr} 
{a}&\mapsto &{abbab}\\ 
{a}&\mapsto &{bab\   {\color{red} c}}\ abbab\\ 
c&\mapsto &{cd}\\ 
d&\mapsto& {dcd}\\ 
    \end{array}\right.}   .$$ Now $a$ and $b$ grow like $(\lambda^2)^n$, and  the complexity of $\lim_{n\to\infty}\alpha_{2,1}^n(a)$ is  $n\log  n$.

All these examples are positive automorphisms. In general, though, automorphisms of $F$ send the generators to words containing both the generators and their inverses. Here is a simple  example, with the fixed point $X=\lim_{n\to\infty}\alpha_0^n(e)=ecd\m fcbab\m a\m d\m fecd\m f\dots$ containing  exceptional paths (such as $cd\m$). 
$$\alpha_0:{\left\{\begin{array}{rrr} a&\mapsto &{ab}\\ 
b&\mapsto &{bab}\\ 
c&\mapsto &{c\ {\color{red}[a,b]}}\\ 
d&\mapsto& {d\ {\color{red}[a,b]^2}}\\
e&\mapsto& {e\ {\color{red}cd\m} f}\\ 
f&\mapsto& {fe\ {\color{red}cd\m} f}\\ 
    \end{array}\right.}$$
    
The complexity of $X$ is quadratic. On the other hand, the fixed point $Y=\lim_{n\to\infty}\alpha_0^n(c)=c[a,b]^\infty=(c[a,b]c\m)^\infty$ belongs to $\bo\Fix\alpha_0$ (because $c[a,b]c\m\in\Fix\alpha_0$) and has bounded complexity.

 \subsection{Graphs}\label{gra}
 Let $G$ be a finite connected graph. 
  Let $E$ be the set of oriented edges, equipped with the fixed-point free involution $e\mapsto \bar e$ reversing the orientation of edges. We write $o(e)$ for the origin of $e$, and $t(e)$ for its terminal point, so that $t(e)=o(\bar e)$. The graph is not assumed to be simplicial: there may be an edge with $o(e)=t(e)$ (a loop), and distinct edges $e,e'$ with $o(e')=o(e)$ and $t(e')=t(e)$. When drawing pictures, we choose a representative for each pair $(e,\bar e)$ and represent it with an arrow and a label (see Figure \ref{theta graph}).
 
 \begin{figure}
\begin{center}
\begin{tikzpicture}

\draw (0,0) node {$\bullet$} ;

\draw (0, 0) .. controls (1, 1) and (2, 1) .. (2, 0);
\draw[fleche=0.999:black]  (0, 0) .. controls (1, -1) and (2, -1) .. (2, 0) node[pos=0.99][right]{b};
\draw (0, 0) .. controls (-1, 1) and (-2, 1) .. (-2, 0);
\draw[fleche=0.999:black] (0, 0) .. controls (-1, -1) and (-2, -1) .. (-2, 0) node[pos=0.99][left]{a};

\draw (6,0) node {$\bullet$} ;
\draw (10,0) node {$\bullet$} ;

\draw[fleche=0.5:black]  (6,0) to[bend left=80] node[pos=0.52][above]{a} (10,0);
\draw[fleche=0.5:black]  (6,0) to[bend left=0] node[pos=0.52][above]{b} (10,0);
\draw[fleche=0.5:black]  (6,0) to[bend right=80] node[pos=0.52][above]{c} (10,0);
\end{tikzpicture}

\end{center}

\caption{\label{theta graph} : A rose with two petals and a theta graph }    

\end{figure}
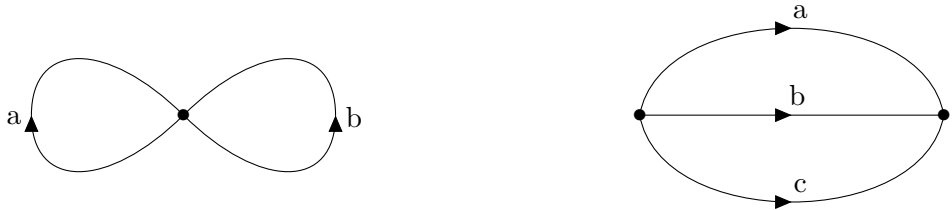

 We assume that the first Betti number $N$ of $G$ (equal to $a_1-a_0+1$ with $a_0$ the number of vertices and $a_1$ the number of non-oriented edges)  is at least 2, so that the fundamental group $ F=\pi_1(G)$, a  free group of rank $N$, is non-abelian.
 
 We view $G$ as a topological space, with universal covering a tree $\tilde G$. One recovers $G$ as the quotient $\tilde G/F$, with $F$   acting  on $\tilde G$ freely by covering transformations. Two distinct vertices of $\tilde G$ are joined by a unique segment, whose length is the number of edges that it contains.  
 
 \subsection{Edge paths and rays}
 \begin{dfn} [Edge path] An edge path in $G$ is a word $\gamma=e_1\dots e_n$ on the alphabet $E$ which is reduced ($e_{i+1}\ne \bar e_i$) and satisfies $o(e_{i+1}) =t(e_i)$ for $i=1,\dots,n-1$.  The inverse path is $\bar\gamma=\bar e_n\dots \bar e_1$ (we sometimes write $\gamma\m$ instead of $\bar\gamma$). 
 
 The length $ |\gamma  | $ of $\gamma$ is the integer $n$. The path is trivial if $n=0$.
 
 A subpath of $\gamma$ is a path of the form $e_re_{r+1} \dots e_s$, with $1\le r\le s\le n$.
 
 For $k\le n$, the $k$-prefix of $\gamma$ is $e_1\dots e_k$, the $k$-suffix is $e_{n-k+1}\dots e_n$. A subpath is interior if it is neither a prefix nor a suffix.
  \end{dfn}
  
   \begin{example} \label{teta}
 Suppose that $G$ is a rose with $N$ petals: it has one vertex and $N$ pairs of  oriented edges $(e_i,\bar e_i)$. An edge path is a reduced word on the alphabet $\{e_1,\bar e_1,\dots,e_N, \bar e_N\}$. If $G$ is a theta graph (see Figure \ref{theta graph}), an edge path is a word on the alphabet $\{a,\bar a, b, \bar b, c,\bar c\}$ in which, for instance,  a letter $a$ may only be followed/preceded by $\bar b$ or $\bar c$ 
 (from a symbolic point of view, the set of  infinite edge paths in this theta graph can be identified with a subshift of finite type of the full shift on the alphabet $\{a,\bar a, b, \bar b, c,\bar c\}$, namely  the subshift consisting of   words  not containing any subword $x\bar x$, $\bar x x$, $xy$, $\bar x\bar y$ for $x,y\in\{a,b,c\}$).
 \end{example}

 \begin{dfn} [Ray] \label{ray}
 A ray $\rho$ in $G$ is a right-infinite word $e_1e_2\dots$ on $E$  satisfying the  conditions $e_{i+1}\ne \bar e_i$ and $o(e_{i+1}) =t(e_i)$ for each $i$.     The origin of $\rho$ is the origin of its initial edge $e_1$.
 
 For $k\ge0$, the \emph{$k$-truncation} $\rho_k$ of $\rho$ is the ray $e_{k+1}e_{k+2}\dots$. 
 
 Two rays $\rho,\rho'$ are \emph{equivalent} if they have a common truncation $\rho_k=\rho'_{k'} $, with $k,k'\ge0$.
 \end{dfn}

We view a non-trivial  edge path $\gamma$    geometrically as a map from $[0,n]$ to $G$ sending $[i-1,i]$ onto $e_i$. This map is usually called an immersion because, $\gamma$ being a reduced word,  it is locally injective. A ray is an immersion from $[0,\infty)$ to $G$. 

An edge path  $\gamma$ may be lifted to an oriented  segment of length $n$ in $\tilde G$. Two oriented segments in $\tilde G$ project to  the same edge path if and only if they differ by the action of an element of $F$. 

When viewed in $\tilde G$, a ray $\rho$ joins a vertex (its origin) to an end of $\tilde G$. Any locally compact space $X$, such as $\tilde G$, has a space of ends $\cale(X)$, defined using complements of compact subsets   (see for instance \cite{BridHa, ross})  The space $\cale(\tilde G)$ is homeomorphic to a Cantor set. More concretely, one may view $\cale(\tilde G)$
as the space of all rays with a fixed origin $\tilde v$. The end of $\rho$ is the only ray $\rho_{\tilde v}$ with origin $\tilde v$ such that $\rho\cap\rho_{\tilde v}$ is infinite. The set of equivalence classes of rays in $G$, as defined above, may be identified with the quotient of $\cale(\tilde G)$ by the natural action of $F$. 

\subsection{Complexity of rays}

\begin{dfn}[Complexity] Let $\rho$ be a ray as in Definition \ref{ray}.
It is a right-infinite word on the alphabet $E$, and we
  let its  (subword, factor) complexity $\cp_\rho$ be the function associating to an integer $n\ge1$ the number of distinct subwords of length $n$.
\end{dfn}

The function $\cp_\rho$ is non-decreasing. It is convenient to   define it for $x$ a positive real number (not just an integer)  by setting $\cp_\rho(x)= \cp_\rho([x])$. It is a standard fact  \cite{morse} 
that $\cp_\rho(n)\ge n+1$ if $\rho$ is not ultimately periodic.

\begin{dfn}[Equivalent functions]\label{eqco}
  Given two non-decreasing functions $\cp $ and $\cq $ defined on $(0,\infty)$, we write $\cp\lesssim \cq$ if there exists $C>0$ such that $\cp(x)\le C\cq(Cx)$   for all $x$.
 We say that $\cp$ and $\cq$  
are equivalent, denoted $\cp\sim \cq$, if $\cp\lesssim \cq$ and $\cq\lesssim \cp$.

\end{dfn}

\begin{rem} [Lipschitz-equivalence]
\label{equiv2}
   When discussing growth, we will consider Lipschitz--equivalence: $\cp$ and $\cq$ are \emph{Lipschitz--equivalent} if there exists $C>0$ such that $\frac1C \cp\le \cq\le C\cp$. 
  Clearly Lipschitz--equivalence implies equivalence in the sense of  Definition \ref{eqco}, though $\lambda^n$ and $\mu^n$ are equivalent but not Lipschitz-equivalent for $\lambda\ne\mu$. On the other hand, the converse (equivalence implies Lipschitz--equivalence) is true if 
$\cp$ is one of the functions $1,n,n^2,n\log n,n\log\log n$, so in most of our statements about complexity equivalence may be understood in either sense.  
\end{rem}

\begin{dfn}[Linear, quadratic]
  We say that $\cp$ is \emph{linear} if it is equivalent to $n$, \emph{quadratic} if it is equivalent to $n^2$. 
\end{dfn}

\begin{example}
 If $\rho$ is a ray, the complexity of a truncation $\rho_k$ is equivalent to the complexity of $\rho$, so equivalent rays have equivalent complexities.
\end{example}

If $e$ is an edge of $G$ with distinct endpoints,   collapsing $e$ to a point yields a graph $G'$ having the same first Betti number, with a projection map 
$\pi:G\to G'$. A ray $\rho$ in $G$ yields a ray $\pi(\rho)$ in $G'$, obtained by erasing the letters $e,\bar e$; if we view $\rho$ as a map from $[0,\infty)$ to $G$, then $\pi(\rho)$ is just a reparametrization of $\pi\circ \rho$. 

\begin{rem} \label{k-sheet}
  Let  $f:G\to G'$ be a $k$-sheeted covering map between finite graphs, for some $k\ge2$. Since any edge path in $G'$ has exactly $k$ lifts to $G$,  the complexity of a ray in $G$ is equivalent to the complexity of its image in $G'$. 
\end{rem}

\begin{lem} \label{cola}
Let $\pi:G\to G'$ be the map obtained by collapsing  an edge $e$ of $G$ with distinct endpoints. If $\rho$ is any ray in $G$,   the complexities of $\pi(\rho)$ and $\rho$ are equivalent.
 \end{lem}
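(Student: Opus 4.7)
The plan rests on a simple combinatorial observation: since $e$ has distinct endpoints it is not a loop, so neither $e$ nor $\bar e$ can be followed by itself in the reduced edge path $\rho$. Consequently, in any factor of $\rho$ of length $L$ at most $\lceil L/2\rceil$ letters lie in $\{e,\bar e\}$, and its image under $\pi$ has length at least $\lfloor L/2\rfloor$. I will exploit this to prove separately the two inequalities $\cp_{\pi(\rho)}\lesssim \cp_\rho$ and $\cp_\rho\lesssim \cp_{\pi(\rho)}$.

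For the first inequality, I would attach to each factor $w$ of $\pi(\rho)$ of length $n$ one of its occurrences in $\pi(\rho)$, pull it back to the corresponding starting position $j$ in $\rho$ (so that $\rho_j=w_1\notin\{e,\bar e\}$), and set $\widehat w := \rho_j\rho_{j+1}\cdots\rho_{j+2n-1}$, a factor of $\rho$ of length $2n$. By the observation, $\pi(\widehat w)$ has length at least $n$ and its first $n$ letters are precisely $w$; the assignment $w\mapsto \widehat w$ is therefore injective and yields $\cp_{\pi(\rho)}(n)\le \cp_\rho(2n)$.

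For the reverse inequality, to each factor $w=\rho_j\cdots\rho_{j+n-1}$ of $\rho$ of length $n$ I would associate the pair $(\pi(W),s)$, where $W$ is the shortest factor of $\rho$ starting at position $j$ with $|\pi(W)|=2n$ (such a $W$ exists because $\rho$ contains infinitely many non-$e,\bar e$ letters, and one checks $|W|\le 4n+1$ from the observation) and $s:=o(\rho_j)\in V(G)$. The crucial step will be to verify that $(\pi(W),s)$ determines $W$: since $\pi$ restricts to a bijection between the edges of $G$ distinct from $e,\bar e$ and the edges of $G'$, the reduced edge path $\pi(W)$ admits a unique lift to a reduced edge path in $G$ starting at $s$, with $e$ or $\bar e$ inserted whenever the lift of the current edge of $\pi(W)$ does not begin at the current vertex in $G$ (the choice being forced by reducedness, and crucially using that $e$ has distinct endpoints). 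Recovering $W$ from the pair and then taking its length-$n$ prefix recovers $w$, so this map is injective and gives $\cp_\rho(n)\le |V(G)|\cdot\cp_{\pi(\rho)}(2n)$.

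Combining the two bounds yields $\cp_\rho\sim \cp_{\pi(\rho)}$, which is the lemma. The only real subtlety, and the only place the distinct-endpoints hypothesis is essentially used, is the uniqueness of the lift in the second direction; the rest is purely bookkeeping.
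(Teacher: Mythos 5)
Your proof is correct, and it rests on the same key observation as the paper's (an edge with distinct endpoints cannot be repeated or immediately reversed, so at most every other letter of a ray lies in $\{e,\bar e\}$), but the bookkeeping is genuinely different. The paper proves both inequalities by passing to the cumulative counts $\sum_{m\le n}\cp(m)$: it shows that a factor of $\pi(\rho)$ of length $\le n$ lifts to one of length $\le 2n-1$, and conversely that $\pi$ is at most $4$-to-$1$ on factors, then uses monotonicity of $\cp$ to squeeze $\cp(n)$ between $\frac{1}{n}\sum_{m\le 2n}$ and $\frac{2}{n}\sum_{m\le n/2}$. You instead produce explicit length-normalized injective encodings: padding each factor of $\pi(\rho)$ to a fixed-length $2n$ preimage for the easy direction, and for the harder one replacing the ``$\le 4$'' fiber bound by the stronger structural fact that a reduced path in $G$ is uniquely recoverable from its $\pi$-image together with its initial vertex, via the forced-insertion lift you describe. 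This lift argument is in effect a sharpened and explicit version of the paper's ``$4$-to-$1$'' remark, and it lets you avoid the summation trick entirely, giving the clean bounds $\cp_{\pi(\rho)}(n)\le\cp_\rho(2n)$ and $\cp_\rho(n)\le|V(G)|\cdot\cp_{\pi(\rho)}(2n)$. Both routes are valid; yours is a little more hands-on, the paper's a little more generic. One minor slip: your bound $|W|\le 4n+1$ should be $|W|\le 4n$ (at most one inserted $e^{\pm1}$ before the first and between each of the $2n$ surviving edges, none after the last by minimality), but this is immaterial since the argument only uses $|\pi(W)|=2n$.
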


\begin{example}
 Suppose that $G$ is a theta graph, as in Example \ref{teta} (see Figure \ref{theta graph}). Then $\rho$ is a right-infinite word on $\{a,\bar a, b, \bar b, c,\bar c\}$. The lemma claims that erasing all letters $c,\bar c$ does not change the complexity, up to equivalence. 
 \end{example}

\begin{proof}
 The key observation is that, in a ray, no letter $e$ or $\bar e$ may be followed (or preceded) by $e$ or $\bar e$. Any subpath of $\pi(\rho)$ of length $\le n$ is therefore the image of a subpath of $\rho$ of length $\le 2n-1$, and   recalling that the functions are non-decreasing we may write
  $$\frac n2 \cp_{\pi(\rho)}(\frac n2)\le\sum_{m=1}^n \cp_{\pi(\rho)}(m)\le\sum_{m=1}^{2n-1} \cp_\rho(m)\le2n\cp_\rho(2n),
   $$
    (the first inequality is obtained by summing the latter half of the middle term).
   
   In the other direction, the image of a subpath of $\rho$ of length $\le  n$ is a subpath of $\pi(\rho)$ of length $\le  n$, and at most 4 subpaths may have a given projection: if $\pi(\gamma_1)=\pi(\gamma_2)$, one passes from $\gamma_1$ to $\gamma_2$ by inserting or deleting $e$ or $\bar e$ (but not both) at the beginning and/or end of $\gamma_1$. Thus 
    $$\frac n2 \cp_\rho(\frac n2)\le\sum_{m=1}^n \cp_\rho(m)\le4\sum_{m=1}^{ n} \cp_{\pi(\rho)}(m)\le4n\cp_{\pi(\rho)}(n).
   $$
The lemma follows. 
\end{proof}

\subsection{Complexity in free groups} \label{cfg}

A   non-abelian finitely generated free group $F$ has a boundary $\bo F$ (see for instance  
   \cite{BridHa,cooper,  GH,Loh}).  It may be viewed as the space of ends of the group $F$, or as the Gromov boundary of the hyperbolic group $F$ (it is homeomorphic to a Cantor set).  One should view $F\cup\bo F$ as a compactification of $F$, and an element $X$ of $\bo F$ as a limit of elements of $F$.    There are   natural actions of $F$ (by left-translations) and of the automorphism group $\Aut(F)$ on $\bo F$.

Concretely, if we fix a free basis $A=\{a_1,\dots,a_N\}$ of $F$, then  an element  $X \in\bo F$ is just a  reduced right-infinite word  $X_A
$ on the alphabet    $A^{\pm1}$ consisting of the $a_i$'s and their inverses, which we denote by $\bar a_i$. The action of $\alpha\in \Aut(F)$ on $X$ consists in replacing each $a_i$, $\bar a_i$ by its image by $\alpha$ (with $\alpha(\bar a_i)=\alpha(a_i)^{-1}$), and reducing, i.e.\  deleting subwords of the form $a_i\bar a_i$ or $\bar a_i a_i$ (the order in which the deletions are performed is irrelevant).    The conjugation by $g$ acts on $X$ as left multiplication by $g$.

   Geometrically,   choose an isomorphism $\varphi$ between  $F$ and the fundamental group $\pi_1(G,v)$ of a graph $G$ as in Subsection \ref{gra}, with  $v$ a   vertex (such an isomorphism is called a marking of $G$; changing the marking of $G$  is equivalent to applying  an automorphism of $F$).

The boundary of $F$  is then identified to the set of immersed rays in $G$ starting at $v$, and also to   the set of rays in $\tilde G$ starting at a given lift $\tilde v$ of $v$ (the space of ends $\cale(\tilde G)$).    One recovers the previous point of view (right-infinite words on the alphabet $A^{\pm1}$) when $G$ is a rose whose edges represent the elements of a free basis $A$ (see Example \ref{teta}).    In this case, $\tilde G$ is the Cayley graph of $F$ with respect to $A$.

We wish to associate a complexity function to any $X\in\bo F$. 
The obvious idea is to    choose a basis $A$ and define the complexity of $X$ as that of $X_A$, but we have to control how it changes if we use a different basis $B$. Equivalently (considering the automorphism taking $A$ to $B$), we must compare the complexities of $X_A$ and $\alpha(X)_A$ for $\alpha\in\Aut(F)$.
 This is achieved by Proposition~\ref{compeq}, which turns out to be a special case of a general fact stated in Lemma~\ref{crayon} further below.

\begin{prop}\label{compeq}
 Let $A$ and $B$ be two free bases of $F$. 
 
 The complexities of the right-infinite reduced words $X_A$ and $X_B$ representing a given $X\in\bo F$ in $A$ and $B$ respectively are equivalent. 
 
 If $\alpha\in\Aut(F)$, the words $X_A$ and $\alpha(X)_A$ representing $X$ and $\alpha(X)$ respectively have equivalent complexities. 
\end{prop}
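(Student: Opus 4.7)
\emph{Plan.} The two assertions are essentially equivalent. Choose a bijection of $B^{\pm1}$ with $A^{\pm1}$ and let $\alpha\in\Aut(F)$ be the automorphism that, read through this identification, sends $A$ to $B$; then the word $X_B$ becomes literally the word $\alpha^{-1}(X)_A$, and since complexity is invariant under relabelling of the alphabet, $\cp_{X_B}=\cp_{\alpha^{-1}(X)_A}$. So it suffices to prove the second assertion, and by applying it to both $\alpha$ and $\alpha^{-1}$ we are reduced to showing $\cp_{\alpha(X)_A}\lesssim \cp_{X_A}$.

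The main tool is Cooper's bounded cancellation lemma (BCC): for each $\alpha\in\Aut(F)$ with fixed basis $A$, there is a constant $C>0$ such that in any reduced decomposition $w=uv$ in $F$, the free reduction of $\alpha(u)\alpha(v)$ cancels at most $C$ letters on each side of the interface. Writing $X_A=x_1x_2\cdots$ and $Y_A:=\alpha(X)_A$, iterating BCC on longer and longer prefixes of $X_A$ and passing to the limit produces a canonical tile decomposition $Y_A=T_1T_2\cdots$, in which each $T_i$ is a (possibly empty) contiguous subword of $\alpha(x_i)$ obtained by trimming at most $C$ letters from each end. Set $L=\max_i|\alpha(a_i)|_A$ and $L'=\max_i|\alpha^{-1}(a_i)|_A$. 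Applying $\alpha^{-1}$ to the reduced word $\alpha(f)$ gives the crucial lower bound $|\alpha(f)|_A\ge|f|_A/L'$; combined with BCC, this yields the key length estimate that for any length-$m$ factor $f=x_{i_1}\cdots x_{i_2}$ of $X_A$, the block $T_{i_1}\cdots T_{i_2}$ has length at least $m/L'-2C$.

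The encoding argument then runs as follows. Given a factor $w^*$ of $Y_A$ of length $n$ whose first letter lies in tile $T_{i_1}$, choose $M$ linear in $n$ with $M/L'-2C>n$ and set $f=x_{i_1}\cdots x_{i_1+M-1}$. By the length estimate, the block $T_{i_1}\cdots T_{i_1+M-1}$ contains $w^*$, and because $w^*$ starts within its first tile, its offset within the reduced word $\alpha(f)$ is bounded by $L+C$. The map $w^*\mapsto (f,\,\mathrm{offset})$ is therefore injective, yielding $\cp_{Y_A}(n)\le (L+C+1)\,\cp_{X_A}(M)$ and hence $\cp_{Y_A}\lesssim \cp_{X_A}$. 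Running the same argument with $\alpha^{-1}$ in place of $\alpha$ gives the reverse inequality. The main technical point is setting up the tile decomposition cleanly --- one has to argue carefully that the letters of each $\alpha(x_i)$ surviving in $Y_A$ really form a contiguous block --- while the lower bound $|\alpha(f)|_A\ge|f|_A/L'$, which would fail for a general injective endomorphism, is precisely what keeps $M$ linear in $n$ and so is what makes the statement specific to automorphisms.
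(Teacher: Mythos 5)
Your proof is correct, but it follows a genuinely different route from the paper. The paper reduces to the case of roses by repeated applications of its collapse lemma (Lemma~\ref{cola}), and then handles a change of marking by factoring the automorphism into elementary Nielsen transformations, each of which it realizes as a pair of collapse moves (blow up an edge, collapse another edge of the resulting theta graph). Your argument instead attacks the general automorphism directly via Cooper's bounded cancellation lemma: you build the tile decomposition $Y_A=T_1T_2\cdots$ with $T_i$ a contiguous block of $\alpha(x_i)$, use BCC to bound the trimming at the ends of the block $T_{i_1}\cdots T_{i_2}$, combine it with the elementary bound $|\alpha(f)|_A\ge|f|_A/L'$ (valid because $\alpha$ is invertible), and then encode a length-$n$ factor of $Y_A$ by a factor of $X_A$ of linearly bounded length plus a bounded offset. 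Both proofs are sound. The paper's route is shorter in context because it reuses Lemma~\ref{cola} (which is needed anyway for Lemma~\ref{crayon}) and avoids invoking bounded cancellation explicitly; yours is more quantitative, gives explicit constants, and makes visible exactly where invertibility is used, which dovetails nicely with the paper's remark that it does not know whether the results extend to injective endomorphisms. Two small points worth tidying: (1) the per-tile statement ``each $T_i$ is obtained from $\alpha(x_i)$ by trimming at most $C$ letters from each end'' isn't quite right when $T_i$ is empty or short, but you don't actually use it --- what you use is the correct block-level estimate that $T_{i_1}\cdots T_{i_2}$ is $\alpha(f)_A$ trimmed by at most $C$ from each end; (2) to guarantee that $w^*$ is contained in the block you need $M/L'-2C\ge n+L$ rather than $>n$, since the start of $w^*$ can sit up to $L-1$ letters into $T_{i_1}$; this is still linear in $n$, so the conclusion stands.
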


Recalling the previous discussion, we get:

\begin{cor} \label{bdef}
 Any element $X\in\bo F$ has a complexity function $\cp_X$ which is well-defined up to equivalence. \qed
\end{cor}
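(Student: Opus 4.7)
The two assertions are equivalent: given a second basis $B$, let $\alpha\in\Aut(F)$ be the automorphism with $\alpha(a_i)=b_i$; the relabeling $b_i\leftrightarrow a_i$ converts $X_B$ into $\alpha^{-1}(X)_A$, so $\cp_{X_B}=\cp_{\alpha^{-1}(X)_A}$, and the two claims coincide up to replacing $\alpha$ by $\alpha^{-1}$. The plan is therefore to prove the first: $\cp_{X_A}\sim\cp_{X_B}$.

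Set $C=\max_{a\in A,\,b\in B}\max(|a|_B,|b|_A)\ge 1$, so that $C^{-1}|g|_A\le|g|_B\le C|g|_A$ for every $g\in F$. Write $X_A=x_1x_2\cdots$, $X_B=y_1y_2\cdots$ and $\phi(i)=x_1\cdots x_i$, $\psi(j)=y_1\cdots y_j$. The key step is a projection lemma: every $A$-prefix $\phi(i)$ of $X$ lies at $B$-distance at most $C$ from some $B$-prefix. I would prove this by working in the Cayley tree $\tilde G_B$, whose $B$-geodesic ray from $1$ to the end $X$ has vertex set $\{\psi(j)\}_{j\ge 0}$. Let $c_i=\psi(j_i)$ be the closest-point projection of $\phi(i)$ onto this ray. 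In the tripod spanned by $\phi(i)$, $\phi(i+1)$ and the end $X$, the center $m$ lies on the sub-ray from $c_i$ to $X$ (it is whichever of $c_i$, $c_{i+1}$ is farther from $1$), so the geodesic from $\phi(i)$ to $m$ passes through $c_i$, yielding $d_B(\phi(i),c_i)\le d_B(\phi(i),m)\le d_B(\phi(i),\phi(i+1))\le C$.

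To compare complexities, I decompose an arbitrary subword of $X_A$ of length $n$ as
\[ w=\phi(i)^{-1}\phi(i+n)=\bigl(\phi(i)^{-1}c_i\bigr)\,\bigl(c_i^{-1}c_{i+n}\bigr)\,\bigl(c_{i+n}^{-1}\phi(i+n)\bigr). \]
The outer two factors have $B$-length at most $C$, hence $A$-length at most $C^2$, so each lies in a fixed finite set of size at most some constant $K$. The middle factor $\psi(j_i)^{-1}\psi(j_{i+n})$ is a subword of $X_B$ or the inverse of one, of $B$-length $d_B(c_i,c_{i+n})\le d_B(\phi(i),\phi(i+n))+2C\le Cn+2C$; it therefore takes at most $2\cp_{X_B}(Cn+2C)$ values. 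Since $w$ is determined by the triple, $\cp_{X_A}(n)\le 2K^2\cp_{X_B}(Cn+2C)$, giving $\cp_{X_A}\lesssim\cp_{X_B}$, and the symmetric argument yields $\cp_{X_A}\sim\cp_{X_B}$. The main technical obstacle will be the projection lemma, where the tree structure of $\tilde G_B$ is used essentially; the remainder is bookkeeping with the bilipschitz constants.
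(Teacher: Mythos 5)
Your strategy is genuinely different from the paper's. The paper (via Proposition~\ref{compeq}, Lemma~\ref{crayon} and Lemma~\ref{cola}) reduces the comparison of bases to graph collapse moves: first reduce to roses, then factor a change of marking through Nielsen transformations, each realized by a collapse followed by the inverse of a collapse through a theta graph. You instead argue directly in the Cayley tree $\tilde G_B$, using closest-point projections onto the $B$-ray to split every $A$-subword of $X$ into two uniformly $B$-short end-pieces and one $B$-subword. This is a legitimate alternative route --- your projection lemma is a version of Cooper's bounded cancellation lemma --- but the specific argument you give for it contains a genuine gap.

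The gap is the assertion that the tripod center $m$ of $\{\phi(i),\phi(i+1),X\}$ always lies on the sub-ray $[c_i,X)$, being whichever of $c_i,c_{i+1}$ is farther from $1$. That holds when $j_i\neq j_{i+1}$, but fails when $c_i=c_{i+1}$: then the geodesics $[\phi(i),c_i]$ and $[\phi(i+1),c_i]$ can share a segment hanging off the ray, $m$ is the point where they branch, and $m$ can lie strictly between $\phi(i)$ and $c_i$. In that case $[\phi(i),m]$ does \emph{not} pass through $c_i$ and the inequality $d_B(\phi(i),c_i)\le d_B(\phi(i),m)$ is reversed, so the chain breaks. Concretely, with $A=\{a,b\}$, $B=\{a,c\}$, $c=a^{10}b$, and $X_A=a^{100}\,b^{-1}a^{-1}b\cdots$, one computes $\phi(101)_B=a^{100}c^{-1}a^{10}$, $\phi(102)_B=a^{100}c^{-1}a^{9}$, and $X_B$ begins $a^{100}c^{-1}a^{-1}c\cdots$, so $c_{101}=c_{102}=a^{100}c^{-1}$, and the median is $m=\phi(102)$, with $d_B(\phi(101),m)=1$ but $d_B(\phi(101),c_{101})=10$. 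The projection lemma itself is true (and is exactly what bounded cancellation gives, with a constant that need not be your $C$), but it requires a different proof than a pairwise tripod argument; you correctly flagged this as the main obstacle. There is also a smaller imprecision in the final count: the middle factor has $B$-length \emph{at most} $Cn+2C$ but of variable size, so it takes at most $\sum_{m\le Cn+2C}2\cp_{X_B}(m)$ values, not $2\cp_{X_B}(Cn+2C)$, naively costing a factor of $n$. This is repairable by the same device as in the paper's proof of Lemma~\ref{cola}: bound $\sum_{k\le n}\cp_{X_A}(k)$ rather than $\cp_{X_A}(n)$ directly and use that for non-decreasing $\cp$ one has $\tfrac n2\,\cp(\tfrac n2)\le\sum_{k\le n}\cp(k)\le n\,\cp(n)$.
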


   As explained above,  any $X\in \bo F$ may be represented by a ray with origin $v$ in any   graph $G$ with a marking $\varphi:F\to \pi_1(G,v)$.    The complexity of $X_A$ is the complexity of the ray representing $X$ in the rose associated to $A$.

\begin{lem} \label{crayon}
 Given $X\in \bo F$, the complexity of the ray representing it in $G$ is, up to equivalence, independent of the choices of $G$ and $\varphi$. In particular, it is equivalent to $p_X$.
\end{lem}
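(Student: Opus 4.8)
The plan is to reduce the comparison of complexities over two different graphs to the special case already handled by Lemma~\ref{cola}, namely the collapse of a single edge with distinct endpoints. Given $X\in\bo F$ represented by a ray $\rho$ in a graph $G$ (with marking $\varphi$) and by a ray $\rho'$ in a graph $G'$ (with marking $\varphi'$), I want to show $\cp_\rho\sim\cp_{\rho'}$, and in particular that both are equivalent to the complexity $p_X$ computed in a rose for some fixed basis $A$. The key topological input is that any two marked graphs representing $F$ are related by a finite sequence of elementary moves: subdivisions of an edge, collapses of an edge with distinct endpoints (forest collapses), and their inverses, together with possibly a change of marking by an automorphism — but since we are comparing representatives of the \emph{same} $X$, the marking changes are already absorbed.

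First I would record that subdividing an edge of $G$ (inserting a valence-$2$ vertex) changes a ray $\rho=e_1e_2\cdots$ into the ray obtained by replacing each occurrence of the subdivided edge by the two-edge path; this is a bounded, letter-to-bounded-word substitution with bounded fibers, so it changes the complexity only up to equivalence (an easy estimate of the same flavour as in the proof of Lemma~\ref{cola}, or simply the observation that subdivision is the inverse of collapsing an edge with distinct endpoints, to which Lemma~\ref{cola} applies directly). Second, Lemma~\ref{cola} itself handles an edge collapse. Third, I would invoke the standard fact (e.g.\ from Culler--Vogtmann / outer space theory, or just from the theory of $\Z$-splittings of $F$) that any two pointed marked graphs $(G,v,\varphi)$ and $(G',v',\varphi')$ with $\pi_1\cong F$ are connected by a finite chain of subdivisions and edge collapses (one can always go ``up'' to a common refinement and back ``down''); concatenating the equivalences from Lemma~\ref{cola} along this chain, and using that $\sim$ is transitive (Definition~\ref{eqco}), gives $\cp_\rho\sim\cp_{\rho'}$. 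Finally, taking $G'$ to be the rose associated to a basis $A$ gives the last sentence, $\cp_\rho\sim p_X$; and Proposition~\ref{compeq} then follows since $X_A$ and $X_B$ are the rays for $X$ in the two roses, while $X_A$ and $\alpha(X)_A$ correspond to changing the marking of the rose by $\alpha$.

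The main obstacle I anticipate is bookkeeping rather than conceptual: one must be careful that the chain of moves connecting $G$ to $G'$ can be taken to respect the basepoints $v,v'$ (so that rays, which have a fixed origin, are tracked correctly), and that under each move the specific ray representing a \emph{fixed} $X\in\bo F$ transforms exactly by the elementary letter-substitution described — in particular that equivalence classes of rays (Definition~\ref{ray}) are preserved, so that the truncation ambiguity does not spoil anything (but truncations have equivalent complexity anyway). A minor point to dispatch is the case of collapsing an edge that is a loop or whose collapse would drop the Betti number — this never arises, since in the refinement-and-collapse scheme every collapsed edge has distinct endpoints by construction, which is exactly the hypothesis of Lemma~\ref{cola}. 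Once these are in place the argument is a short induction on the length of the chain of moves.
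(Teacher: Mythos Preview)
Your proposal is correct and follows the same overall strategy as the paper: connect the two marked graphs by a finite chain of edge collapses and their inverses, and apply Lemma~\ref{cola} at each step. The paper's presentation differs only in how it justifies the existence of such a chain: rather than citing connectivity of (the spine of) outer space as a black box, it first collapses each graph to a rose, observes that the remaining discrepancy is a change of marking, i.e.\ an element of $\Aut(F)$, and then factors that automorphism into elementary Nielsen transformations, each of which is realized geometrically as an expand-then-collapse through a theta graph (Figure~\ref{theta graph}). Your route is tidier if one is willing to import the Culler--Vogtmann fact; the paper's is more self-contained and makes the link to Proposition~\ref{compeq} (automorphisms preserve complexity up to equivalence) completely explicit.
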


\begin{proof}
 This is a   direct consequence of Lemma \ref {cola}. First, using   collapse moves as in the lemma, one may reduce to the case when the graphs are roses. One must then check that the complexity is invariant under a change of the marking (the isomorphism $\varphi$). 
 
   One way of seeing this is by observing that a change of marking is just an element of $\Aut(F)$.    There is a standard   set of generators for $\Aut(F)$, known as elementary Nielsen transformations  (see for instance  \cite{Bogop}, \cite{LS}). Up to renumbering the generators $a_1,\dots,a_N$ of $F$, there are two Nielsen transformations: the first one maps $a_1$ to $a_1\m$ and leaves the other $a_i$'s fixed (so does not change the complexity), the second one maps $a_1$ to $a_1a_2$ and leaves the other $a_i$'s fixed.   A transformation of the second kind may be realized geometrically by applying a collapse move preceded by the inverse of a collapse move (blowing up the vertex of a rose into an edge);  for instance,  collapsing either the  $a$-edge or the $b$-edge   in the theta graph of Figure \ref {theta graph} produces two   roses whose markings differ by a Nielsen automorphism.
   One concludes      
   using Lemma \ref {cola}.
\end{proof}

\begin{rem}\label{restr} 
 If $J$ is a finitely generated subgroup of $F$, its boundary $\bo J$ naturally embeds into $\bo F$. Moreover, by a well-known theorem by M.\ Hall   (see for instance  \cite{LS}, \cite{stal}), $J$ is a free factor of a finite index subgroup $F'\inc F$. 
Let  $F=\pi_1(G)$, and let $f:G'\to G$ be the  covering map associated to the inclusion $F'\inc F$. Remark~\ref{k-sheet}    implies that 
 \emph{given $X\in\bo J$, its complexity in $F$ is equivalent to its complexity in $ F'$, hence to its complexity in $J$.}
\end{rem}
 
\subsection{Train track maps}\label{cstt}

Let $\alpha$ be an automorphism of a finitely generated free group $F$.    It represents an element $\Phi$ of the   group of outer automorphisms $\Out(F)=\Aut(F)/\Inn(F)$, with $\Inn(F)$ the group of inner automorphisms (conjugations).
It is customary to represent   $\alpha$     (or rather $\Phi$)   
by a continuous map $f:G\to G$ (a homotopy equivalence), with  $G$   a finite connected graph whose fundamental group is isomorphic to  $F$. 

Saying that $f$ represents $\alpha$ must be understood as follows. As we do not wish to choose basepoints, a homotopy equivalence of $G$ does not induce an automorphism of $F=\pi_1(G)$, but only an outer automorphism.
We require that the element of $\Out(F)$ induced by $f$ be the image $\Phi$ of $\alpha$   in $\Out(F)$.

We will use \emph {completely split train tracks}, introduced in \cite{recogn} elaborating on \cite{BHtrain} and \cite{Tits1}.   A completely split train track map is a map $f:G\to G$ satisfying many properties.  We describe the properties that we shall need. 

 Unless mentioned otherwise, all definitions and properties below may be found   in \cite{recogn}, where it is proved that any automorphism $\alpha\in\Aut(F)$ has a power $\alpha^k$ which may be represented by a completely split train track map   $f$ on some graph $G$ with $\pi_1(G)\simeq F$ (note that    Theorem \ref{mainintro2}   
  is true for $\alpha$ if they are true for $\alpha^k$). 
   
   If $Z$ is a finite set and $\varphi:Z\to Z$ is any map, there is a power $\psi$ of $\varphi$ such that $\psi(z)$ is fixed by $\psi$ for any $z\in Z$. This observation allows us to assume  that many attributes of   paths are fixed by $f$.

The   train track map $f$ maps a vertex of $G$ to a vertex, an edge to a non-trivial edge path. The restriction of $f$ to an edge path $\gamma$ may fail to be locally injective (because $f(e_{i+1})$ and $f(\bar e_i)$ may start with the same edge), and we define $f_\sharp(\gamma)$ as the edge path obtained by tightening the image:   $f_\sharp(\gamma)$ is homotopic to $f(\gamma)$ relative endpoints and locally injective (there is no backtracking); replacing $f(\gamma)$ by $f_\sharp(\gamma)$ is similar to reducing a word.   If $e$ is an edge, then $f_\sharp(e)=f(e)$ and we usually just write $f(e)$.

A path $\gamma$ is \emph{pre-trivial} if some $f^k_\sharp (\gamma)
 $ is a trivial path. 
 \begin{example}

When $G$ is a rose with $N$ petals, we orient the edges and  label them by letters which we view as generators of $\F_N$. The map $f$ then corresponds to an endomorphism of $\F_N$. 
 For instance, the automorphism of $\F_3$ sending $a$ to $a$, $b$ to $ba$, $c$ to $cb$ is represented by a map $f$ fixing the edge labelled $a$, mapping the $b$ edge to the edge path $ba$, and the $c$ edge to the edge path $cb$. 
\end{example}
 
 All paths in $G$ will be edge paths, so we often drop the word ``edge''.
 
 A \emph{splitting} of an edge path $\gamma$ is a decomposition of $\gamma$ as a concatenation of subpaths $\gamma=\gamma_1\dots \gamma_p$ such that $f^k_\sharp(\gamma)=f^k_\sharp(\gamma_1)\dots f^k_\sharp(\gamma_p)$ for all $k\ge 1$:
  no cancellation occurs between the tightened images of $\gamma_i$ and $\gamma_{i+1}$ when applying a power of $f$. 
 We write $\gamma=\gamma_1\cdot\gamma_2\cdot {\dots} \cdot\gamma_p$ when we want to emphasize that a decomposition of $\gamma$ is a splitting. 
  We define a splitting $\gamma_1\cdot {\dots} \cdot \gamma_p \cdot {\dots}$ of a ray $\rho$ similarly.
 
 A \emph{Nielsen path} (NP) is a non-trivial  edge path $\gamma$ such that  $f_\sharp(\gamma)=\gamma$ (unlike other authors, we do not need to consider paths whose endpoints are not vertices). An edge $e$ is a Nielsen path if and only if it is fixed: $f(e)=e$.  A path is \emph{pre-Nielsen} if some $f^k_\sharp(\gamma)$, with $k\ge0$, is Nielsen. 
 
 There is a filtration $\es=G_0\inc G_1\inc\cdots\inc G_K=G$ representing $G$ as an increasing sequence of (not necessarily connected) subgraphs. The union of the edges contained in $G_r$ but not in $G_{r-1}$ is a subgraph $H_r$ called the \emph{$r$-th stratum.} We say that edges in $H_r$ have \emph{height} $r$. The height $h(\gamma)$ of an edge path is the maximal height of its edges. 
 
 The filtration is compatible with $f$, in the sense that   $f(G_r)\inc G_r$; equivalently,   the image of an edge of height $r$ is an edge path   of height $\le r$.
 
 There are three types of strata. 
 
$\bullet$  $H_r$ is a \emph{0-stratum} if $f(H_r)\inc G_{r-1}$.  All components of a  0-stratum are contractible (they are subtrees of $G$). 
 
$\bullet$  
$H_r$  is an \emph{EG stratum} if, for any edge $e$ of $H_r$, the image $f(e)$ has a splitting $e_1\cdot u \cdot e_2$ with $e_1,e_2$ edges in $H_r$ (the path $u$ may be   trivial); this is not the standard definition, but it is equivalent to it after raising $f$ to a power if needed. No cancellation between edges in $H_r$ occurs when applying $f$ to $f^k_\sharp(e)$, for $k\ge1$ and $e$ an edge of $H_r$ (edges of $H_r$ are  \emph{$r$-legal}).
In particular, the length of $f^k_\sharp(e)$ grows exponentially (EG means exponentially growing). 
 
$\bullet$ The last possibility is 
 an \emph{NEG stratum}: $H_r$ consists of a single edge $e$,    and $f(e)$ is an edge path with a splitting of the form $e\cdot u$ for some 
  loop $u$ contained in $G_{r-1}$. The stratum, and the edge $e$, is \emph{fixed} if $f(e)=e$, \emph{linear} if $u$ is pre-Nielsen   (in particular, not pre-trivial).   
  
An edge is called a 0-edge, an EG edge, an NEG edge depending on the nature of the stratum containing it.

A Nielsen path is \emph{indivisible},  and called an \emph{INP}, if it is not a concatenation of two Nielsen paths. Following tradition,   we do not consider a fixed  edge $e$   as an INP (it becomes divisible if paths whose endpoints are not vertices are allowed). Any Nielsen path is a finite concatenation of INP's and fixed edges. 

If $\gamma$ is an INP of height $r$, the stratum $H_r$ cannot be a 0-stratum. If it is EG, then the initial and terminal edges of $\gamma$ are distinct edges belonging to $H_r$, and (up to changing the orientation) $\gamma$ is the only INP of height $r$.  If $H_r$ is NEG, then   $H_r$  is a linear edge $e$ and  (up to orientation) $\gamma=eu^p\bar e$ with $p\ne0$, where $u$ is a Nielsen path  and $f(e)=eu^d$ with $d>0$. Implicit in the notation $u^d$ is the fact that one can concatenate copies of $u$:    if $u=e_1\dots e_n$, then the terminal point of   $e_n$ is the origin of $e_1$, and  $e_n\ne\bar e_1$; when $d<0$, the notation $u^{d}$   means $\bar u^{-d}$. 

 An \emph{exceptional path} is a path of the form $\gamma=eu^p\bar e'$ where $u$ is a Nielsen path, $p\in\Z$,   
and $e,e'$ are  linear edges such that $f(e)=e\cdot u^d$ and $f(e')=e'\cdot u^{d'}$ with $d,d'>0$; unlike \cite{recogn}, we require 
 $d\ne d'$: if $d=d'$ we view $eu^p\bar e'$ as an INP, 
not as an exceptional path. 
 Note that $f_\sharp(\gamma)$ is an exceptional path if $\gamma$ is one.  

A  non-trivial path $\gamma$ is a \emph{connecting path}  if it is contained in a 0-stratum and its endpoints belong to  an EG stratum $H_r$. A connecting path cannot be pre-trivial. Since 0-strata have contractible components, there is a bound for the length of connecting paths. 

A \emph{complete splitting} of a path $\gamma$ is a splitting $\gamma=\gamma_1\cdot{\dots}\cdot \gamma_p$ such that each $\gamma_i$, called a \emph{term}  (or splitting unit) of the splitting, is one of the following:
\begin{itemize}
\item a single edge in an EG or NEG stratum;
\item an exceptional path;
\item an indivisible Nielsen path (INP);
\item  a connecting path contained in some $f^k_\sharp(e)$, with $k\ge1$ and $e$ an edge in an EG or NEG stratum. 

\end{itemize}

A term is never pre-trivial. There is a uniform bound for the length of terms which are not exceptional or INP's of the form $eu^p\bar e$ with $e$ linear.

A path is \emph{completely split} if it is non-trivial and has a complete splitting.    This complete splitting is unique. We say that its terms are the terms of $\gamma$. A subpath $\gamma_r\dots\gamma_s$, which is a union of   terms of the complete splitting of $\gamma$, will be called a \emph{full subpath} of $\gamma$. It is \emph{interior} if $r>1$ and $s<p$.

 If a subpath of a completely split $\gamma$ is an INP or an exceptional path, it is contained in a term of the complete splitting of $\gamma$ (which is exceptional or an INP), see the proof of Lemma 4.11 in \cite{recogn}.

The image of any   EG or NEG edge is completely split, and its complete splitting is a refinement of the splitting $e_1u e_2$, $eu$ or $\bar u\bar e$ mentioned above. 
The tightened image of a completely split path $\gamma_1\cdot{\dots}\cdot\gamma_p$ is completely split, and its  complete splitting  refines the splitting $f_\sharp(\gamma_1)\cdot{\dots}\cdot f_\sharp(\gamma_p)$. 

If $\gamma$ is an arbitrary edge path, there is an integer $k$ such that $f^k_\sharp(\gamma )$ is completely split.

Generally speaking, many results are proved by induction on height, using the following observations. If $\gamma$ is a connecting path contained in $H_i$, then $f_\sharp(\gamma)\inc G_{i-1}$. If $e$ is an edge in an    NEG stratum $H_i$, then $f(e)$ equals $eu$ or $ue$ with $u\inc G_{i-1}$.

\subsection{Growth 
}
 
 Let $f$ be a completely split train track map as above.
The growth of an edge  path $\gamma$ is the Lipschitz-equivalence class of the function 
$n\mapsto  | f^n_\sharp(\gamma) | $
(see Remark \ref{equiv2}). A path is \emph{growing} if 
$| f^n_\sharp(\gamma) | $
is unbounded (this is equivalent to $\gamma$ not being pre-Nielsen   or pre-trivial). 
Since some $f^k_\sharp(\gamma )$ is completely split, and the growth of a completely split path is the maximal growth of its terms, it suffices to understand the growth of  terms. 

An INP
 and 
a fixed edge do  not grow. An  
exceptional path, and a linear edge as defined above,  grow linearly. If $\gamma$ is a connecting path, $f(\gamma)$ has lower height and one can understand the growth of $\gamma$ by induction. Induction also applies if $e$ is an NEG edge   since  $f^n_\sharp(e)=euf_\sharp(u)\dots f^{n-1}_\sharp(u)$ if $f(e)=eu$; in particular, $e$ has polynomial growth of degree $k+1$ if $u$ has polynomial growth of degree $k$. As previously mentioned, an   EG edge grows exponentially. More precisely:

\begin{prop} \label{growth}
The growth function of any edge    path $\gamma$ is Lipschitz-equivalent to $n^d\lambda^n $ for some $\lambda\ge1$ and some integer $d\ge0$. If $\gamma$ is growing and its growth is polynomial ($\lambda=1$),   
some $f^k_\sharp(\gamma)$ is completely split and there is  a term which is an exceptional path or a linear edge. 
\end{prop}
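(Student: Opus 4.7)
The plan is to prove both statements simultaneously by induction on the height $r$ of the smallest $G_r$ containing $\gamma$. First I would reduce to the case where $\gamma$ is completely split: choose $k$ so that $f^k_\sharp(\gamma)$ has a complete splitting, and observe that shifting the index by $k$ (and multiplying by the bounded length of $f^k_\sharp(\gamma)$ relative to $\gamma$) preserves the Lipschitz-equivalence class of the growth function. Then, since a complete splitting persists under $f_\sharp$ with no cancellation between terms, one has $|f^n_\sharp(\gamma)| = \sum_i |f^n_\sharp(\gamma_i)|$, so the growth of $\gamma$ is Lipschitz-equivalent to the maximum of the growths of its terms. Thus it suffices to treat a single term.

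For the first statement, I would handle the possible types of term in turn. Fixed edges and INP's are non-growing, with constant length. A linear edge $e$ with $f(e)=eu$ satisfies $f^n_\sharp(e)=eu^n$ and grows linearly. An exceptional path $eu^p\bar{e'}$ with $f(e)=eu^d$, $f(e')=e'u^{d'}$, and $d\ne d'$ satisfies $f^n_\sharp(eu^p\bar{e'})=eu^{p+n(d-d')}\bar{e'}$, which also grows linearly. For a connecting path $\gamma\inc H_r$, one has $f_\sharp(\gamma)\inc G_{r-1}$, so the inductive hypothesis applied to $f_\sharp(\gamma)$ gives the desired form. For an NEG edge $e$ in height $r$ with $f(e)=eu$ and $u\inc G_{r-1}$, iteration gives
$$f^n_\sharp(e)=e\cdot u\cdot f_\sharp(u)\cdots f^{n-1}_\sharp(u),$$
so if $|f^k_\sharp(u)|$ is Lipschitz-equivalent to $k^d\mu^k$ by induction, summing gives something Lipschitz-equivalent to $n^d\mu^n$ when $\mu>1$ and to $n^{d+1}$ when $\mu=1$. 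Finally, for an EG edge in $H_r$ the transition matrix of $H_r$ is Perron--Frobenius with some eigenvalue $\lambda_r>1$; iterating the splitting $f(e)=e_1ue_2$ controls the number of $H_r$-edges inside $f^n_\sharp(e)$ by $\lambda_r^n$, while the contributions of lower-height paths between consecutive $H_r$-edges are uniformly bounded by the inductive hypothesis, yielding $|f^n_\sharp(e)|\sim \lambda_r^n$.

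For the second statement, suppose $\gamma$ is growing with polynomial growth ($\lambda=1$). After iterating we may assume $\gamma$ itself is completely split. At least one term is growing, and each growing term has polynomial growth, since otherwise $\gamma$ would grow exponentially. A polynomially growing term cannot be an EG edge. If some growing term is a linear edge or an exceptional path, we are done. Otherwise each growing term is either a connecting path or an NEG edge $e$ with $f(e)=eu$ whose tail $u\inc G_{r-1}$ is growing; in each case, after one application of $f_\sharp$ we obtain a completely split path all of whose growing terms have strictly smaller height, so the inductive hypothesis produces a linear edge or exceptional path in some $f^{k+\ell}_\sharp(\gamma)$.

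The main technical obstacle is the bookkeeping for the NEG case: one must verify that summing Lipschitz-equivalence classes of the form $k^d\mu^k$ along $k=0,\ldots,n-1$ gives back a function in the same family, with the exponent $d$ rising by exactly one when $\mu=1$, and that this operation respects Lipschitz-equivalence (not just equivalence in the sense of Definition \ref{eqco}). Beyond this, the rest is a careful induction on height using the standard structure of completely split train tracks recalled above.
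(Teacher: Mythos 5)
Your reduction to terms of a complete splitting and the treatment of fixed edges, INPs, linear edges, exceptional paths, connecting paths, and NEG edges are all reasonable and roughly match the structure the paper alludes to. However, your EG case contains a genuine error: you claim that ``the contributions of lower-height paths between consecutive $H_r$-edges are uniformly bounded,'' concluding $|f^n_\sharp(e)|\sim\lambda_r^n$ for an EG edge $e$ in $H_r$. That is false. Between consecutive $H_r$-edges in the complete splitting of $f^n_\sharp(e)$, one does not only see connecting paths (which are indeed of bounded length); one can see arbitrary lower-height paths that themselves grow under iteration. A lower-height path inserted at step $k$ contributes $|f^{n-k}_\sharp(u)|$ at step $n$, and summing over the $\sim\lambda_r^k$ insertions at each step $k$ gives, for a tail growing like $m^{d'}\mu^m$, a total $\sim\sum_{k\le n}\lambda_r^k(n-k)^{d'}\mu^{n-k}$, which is $\sim n^{d'+1}\lambda_r^n$ when $\mu=\lambda_r$ and $\sim n^{d'}\mu^n$ when $\mu>\lambda_r$. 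The paper's own example $\alpha_{1,1}$ makes this concrete: $\{a,b\}$ is an EG stratum and yet, as the paper states, ``the letters $a$ and $b$ grow like $n\lambda^n$,'' because the lower-height $(c,d)$-pieces inserted between $H_2$-edges grow with the same base $\lambda$. So the EG case is actually the hardest part of the first assertion and requires the same kind of summation you carry out for NEG edges, mixed with the Perron--Frobenius analysis --- not a ``uniformly bounded'' shortcut. Note that the paper does not prove the first assertion here at all: it cites the appendix of \cite{counting}.

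There is also a small but real misstatement in your argument for the second assertion: for a non-linear non-fixed NEG edge $e$ with $f(e)=eu$, it is not true that ``after one application of $f_\sharp$ we obtain a completely split path all of whose growing terms have strictly smaller height,'' since the complete splitting of $f(e)=eu$ still contains $e$ itself as a term of the same height. The fix is to apply the inductive hypothesis (on height) to the growing loop $u\inc G_{r-1}$: some $f^\ell_\sharp(u)$ is completely split with a linear or exceptional term, and that path appears as a full subpath of $f^{\ell+1}_\sharp(e)=e\cdot u\cdot f_\sharp(u)\cdots f^\ell_\sharp(u)$, which is what one needs. With that repair your treatment of the second assertion agrees with the paper's one-line indication (``induction on height, using remarks made above'').
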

 
 The first assertion is proved in the appendix of \cite{counting}. The second one follows from remarks made above, using induction on height. 
  We say that $n^d\lambda^n $ is the \emph{growth type} of $\gamma$.
 
  Similarly, if $\alpha\in\Aut(F)$ and $g\in F$, the length of $\alpha^n(g)$ is Lipschitz-equivalent to some $n^d \lambda^n$ as $n\to\infty$   (see the appendix of  \cite{counting}).
 
 \section{Computing the complexity} \label{totc}

  In this section we compute the complexity in a case that can seem specific at first, but we shall explain in Section~\ref{cfp} that we can reduce to this situation, see Lemma~\ref{morceaux}.
 
 Let $f:G\to G$ be a completely split train track map. Let $e$ be an edge such that $f(e)=e\cdot\gamma$ for some completely split path $\gamma$. 
  Then $f^k_\sharp(e)=e\cdot \gamma \cdot {\dots} \cdot f^{k-1}_\sharp(\gamma)$.
 In particular, $f^{k-1}_\sharp(e)$ is the beginning of $f^k_\sharp(e)$, and the increasing union of these paths is
  a completely split ray $\rho=e \cdot \gamma \cdot f_\sharp(\gamma)\cdot {\dots} \cdot f^k_\sharp(\gamma)\cdot {\dots}$ whose complexity we shall compute.  
   For technical reasons, we will also consider the total complexity of $\gamma$, defined as follows.

\begin{dfn}[Total complexity]\label{totcomp}
  We define the \emph{total complexity} $\cp_\gamma(n)$ of a path $\gamma$ as the number of distinct   paths of length $n$ appearing as a subpath of some $f^k_\sharp(\gamma)$   (this is similar to the complexity of D0L languages studied in \cite{Ehrenfeucht-Lee-Rozenberg}).
\end{dfn}

Clearly $\cp_\gamma\le \cp_\rho$ for $\gamma$ and $\rho$ as above. See 
Lemma \ref{compar} for a converse.

 Consider all growing terms in the complete splittings of the paths $f^k_\sharp(\gamma)$, and  their growth   types $n^d\lambda^n $.

\begin{dfn}[Divergence] \label{diver}
 Following \cite{pansiot}, we say that $\gamma$  (or $\rho$) is \emph{non-divergent} if all growing terms have the same $\lambda$ and $d$, \emph{exponentially divergent} if two different $\lambda$'s occur, \emph{polynomially divergent} if all $\lambda$'s are the same but two different $d$'s occur (this is equivalent to saying that either all $\lambda$'s are equal to 1 and  $n^2$ occurs,  or all  $\lambda$'s are equal, different from 1,  and    $n\lambda^n$  occurs). 
 \end{dfn}

 The key result is the following.

\begin{prop} \label{comptot}\label{compquadr}
  
  Let  $e$ with $f(e)=e\cdot\gamma$ and $\rho=e\gamma f_\sharp(\gamma)\cdot{\dots}\cdot f^k_\sharp(\gamma)\dots$ be as above.
 
   \begin{enumerate}
  \item If $\gamma$ does not grow (it is pre-Nielsen),  
the  complexity $\cp_\rho(n)$  is bounded   \emph{(this is obvious).} 
\item If all growing terms of the paths $f^k_\sharp(\gamma)$ grow exponentially, $\cp_\rho(n)$ is equivalent to $n$, $n\log \log n$, or $n\log n$, depending on whether $\gamma$ is non-divergent, polynomially divergent, or exponentially divergent.

  \item  
Otherwise,  
$\cp_\rho(n)$   
   is quadratic (equivalent to $n^2$).
     \end{enumerate} 
  \end{prop}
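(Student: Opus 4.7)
The first case is immediate: if $\gamma$ is pre-Nielsen, some $f^k_\sharp(\gamma)$ coincides with $f^{k+1}_\sharp(\gamma)$, so $\rho$ is ultimately periodic and $\cp_\rho$ is bounded. For cases (2) and (3), I would exploit the complete splitting infrastructure to write $\rho = e\sigma_1\sigma_2\sigma_3\cdots$ as an infinite concatenation of terms (EG/NEG edges, INPs, exceptional paths, connecting paths), obtained by refining the complete splitting $\gamma = \gamma_1\cdots\gamma_p$ iteratively: since $f^k_\sharp(\gamma)$ splits completely as $f^k_\sharp(\gamma_1)\cdots f^k_\sharp(\gamma_p)$, the whole ray inherits a canonical term structure in which no cancellation ever occurs between consecutive terms under further iteration of $f$.

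For case (2), the key simplification is that no \emph{growing} term is exceptional or a linear edge (those grow only polynomially), so all growing terms are EG edges with a common growth type $\lambda^n$ (or rather $n^d\lambda^n$ for a fixed $\lambda>1$ and $d$ depending on the divergence status). The term sequence $(\sigma_i)$ is thus governed by a genuine substitution $\zeta$ on a \emph{finite} alphabet of term-types (each growing term being an EG edge, each non-growing term being an INP or a fixed edge, of which there are finitely many). I would then compare $\cp_\rho(n)$ to the Pansiot complexity of the substitutive word generated by $\zeta$ by showing that, up to a bounded multiplicative factor, subwords of $\rho$ of length $n$ correspond to subwords of bounded term-length in the substitutive sequence. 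Pansiot's trichotomy then transfers: non-divergent gives $n$, polynomial divergence gives $n\log\log n$, exponential divergence gives $n\log n$, with the divergence notion matching Pansiot's because it is defined via the same growth-type invariants $n^d\lambda^n$.

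For case (3), a polynomially-growing term appears, meaning either a pure NEG edge with polynomial growth or an exceptional path $eu^p\bar e'$ or linear edge producing arbitrarily long powers of a Nielsen path $u$. For the \textbf{lower bound}, I would exhibit $\Omega(n^2)$ distinct length-$n$ subwords: the presence of a growing polynomial term forces $\rho$ to contain $u^p$ for every $p$ up to some value proportional to an iteration index, and for each $p$ the subword $u^p$ sits in a canonical context $w_1 u^p w_2$ determined by the splitting; varying the length $m\le n$ and the internal offset inside the $u^p$-block yields two independent parameters each ranging over $\Omega(n)$ values, so the total count is $\Omega(n^2)$. One must check that these subwords are genuinely distinct in the reduced word $\rho$, which follows because the Nielsen path $u$ is not pre-trivial and the surrounding terms are determined by the splitting. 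For the \textbf{upper bound}, a length-$n$ subword $w$ is still contained in some minimal block $\sigma_j\cdots\sigma_{j+m}$; I would parameterize such blocks by (i) a finite-alphabet term pattern and (ii) the exponents $p_\ell$ in any exceptional/linear terms appearing, and bound the total count by $O(n)\cdot O(n) = O(n^2)$ (one $O(n)$ from the substitution complexity of the term-pattern sequence, one $O(n)$ from the exponent parameters, since each $p_\ell$ is bounded by $|w|$).

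The hardest step, and the main obstacle, is the $n\log\log n$ asymptotic in case (2): as in Pansiot's original proof this requires a delicate hierarchical counting distinguishing polynomial from exponential divergence, tracking how the prefix of $\sigma_i$ already aligned inside a length-$n$ window carries information across scales $\lambda^k$. A secondary obstacle is ensuring, when reducing to Pansiot, that the equivalence between $\cp_\rho$ and the complexity of the term sequence is tight enough in both directions; this requires uniformly bounding the length of non-exceptional terms and carefully handling the boundary terms of a length-$n$ window, which are not necessarily full terms of the splitting but partial subpaths whose number is controlled via the bound on term lengths away from exceptional paths.
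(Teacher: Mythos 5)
Case (1) and case (2) match the paper's approach. For (2), once you observe that no linear edge or exceptional path occurs as a term, the term-alphabet is finite (with a uniform bound on term length, via Lemma~\ref{long}), $f$ induces a genuine substitution on it, and $\cp_\rho$ is equivalent to the complexity of the resulting substitutive word --- which is exactly what the paper does. You flag the $n\log\log n$ estimate as the main obstacle, but once the reduction is made there is nothing to re-derive: Pansiot's theorem gives it, and one need only check that the divergence of $\gamma$ coincides with the divergence of the induced substitution, which holds because the growth of a term equals the growth of its letter under the substitution.

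The gap is in your upper bound for case (3). You parameterize a length-$n$ window by a term pattern plus the exponents $p_\ell$ of the linear/exceptional terms it meets, and claim the exponents contribute $O(n)$ ``since each $p_\ell\le n$.'' That is not a count: if $k\ge 2$ such terms appear, the number of tuples $(p_1,\dots,p_k)$ with each $p_\ell\le n$ is of order $n^k$. To get $O(n^2)$ you must argue that all exponents visible in a single window of the actual ray are \emph{synchronized} --- each differs from a common iteration index by a bounded amount --- and that does not follow from your parameterization alone. The paper's upper bound captures this automatically: it pulls $w_0$ back to the smallest full subpath $w_j$ with $w_{j-1}\subset f_\sharp(w_j)$, stopping at the first $p$ with $|w_p|\le K$; then $w_0$ is determined by the bounded abstract path $w_p$, by $p$ (restricted via Lemma~\ref{calcul} to an interval of length $O(n)$ because some interior growing term of $w_p$ grows at least linearly), and by the position of $f^p(x)$ inside $w_0$ (an $O(n)$ choice), and all exponents inside $f^p_\sharp(w_p)$ are functions of $w_p$ and $p$. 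Your argument also does not address the sub-case where $w_p$ has \emph{no} interior growing term --- which can occur precisely in case (3), since Lemma~\ref{long} fails once linear edges are present and INPs of the form $eu^q\bar e$ may be arbitrarily long --- and there the paper invokes stabilization of prefixes (Lemma~\ref{stabpref}) instead. Your $\Omega(n^2)$ lower bound is the right idea (it is the paper's Lemma~\ref{entour}), but the distinctness claim needs the paper's device of identifying the $u^{a_k}$-block uniquely by its height and its length $\ge n/2$, and a separate treatment of the awkward possibility that $u$ is a prefix of the following term when $\theta$ is a linear edge rather than an exceptional path.
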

  
  Equivalence may be understood as Lipschitz-equivalence (see Remark \ref{equiv2}).

\begin{rem}\label{tcf}
The same results hold for  the  {total complexity} $\cp_\gamma(n)$,  
except that in order to get  quadratic complexity we sometimes need $\gamma$ to be ``long enough''  
(see Lemma \ref{entour}). 

For instance, consider the automorphism   of $\F_2$ sending $a$ to $a$ and $b$ to $ba$, and the corresponding map on a rose. The  $b $ edge is linear   but  the total complexity of the path  $bba$ is linear. On the other hand $bbaba^2$, which is of the form $\delta f(\delta) f^2 (\delta)$, does have quadratic total complexity.
\end{rem}

The remainder of this section is devoted to the proof of Proposition \ref{comptot}.  

It is easy to check that   the
proposition  
(as well as most of the statements that we shall make) is true for  
$f$ provided that it is true for some power  
(note that a  power of a completely split train track map is also one). 
We may therefore replace  
$f$ by  a power whenever convenient.   

  We will distinguish between geometric subpaths of $\rho$   
  and abstract paths, i.e.\ words on the alphabet consisting of the edges of $G$; a given abstract path may appear several times in 
  $\rho$. 
  
\subsection{Purely exponential growth} \label{lbd}

  We prove Assertion (2) of Proposition \ref{comptot}.
 By Proposition \ref{growth}, a growing $\gamma$ is as in (2) if and only if no $f^k_\sharp(\gamma)$ has a term which is an exceptional path or a linear edge. Any term appearing in the complete splitting of some $f^k_\sharp(\gamma)$ is either non-growing (an INP, a fixed edge, or a  pre-Nielsen connecting path) or exponentially growing (a single non-linear edge or a connecting path). There are no exceptional paths,
and the length of INP's is bounded, so there is a uniform bound for the length of  terms appearing.
This allows us to  
 introduce a substitution as in  \cite{Gup} and apply  Pansiot's theorem directly.

Let $R$ be the alphabet consisting of $e$ and all terms $\delta$ appearing in the complete splittings of the paths $f^k_\sharp(\gamma)$.  
 The remarks made above imply that $R$ is finite.

The map $f$ induces a substitution $\kappa$ on the alphabet $R$: a term $b$ is sent to the word in $R^*$ given by the complete splitting of $f_\sharp(b)$. Applying powers of $\kappa$ to $e$ produces a $\kappa$-fixed right-infinite word $Y$  in the alphabet $R$ corresponding to the complete splitting of $\rho$.
The complexity of $Y$ is controlled by Pansiot's theorem, we just need to relate it to that of $\rho$.

Let $C$ be a bound for the length (as a path) of all terms in $R$. We can associate to any subword of length $n$ in $Y$ the corresponding completely split subpath of $\rho$. It has length between $n$ and $Cn$. Uniqueness of the complete splitting ensures that this map is injective, so that $$\cp_Y(n)\le\sum_{m=n}^{Cn}\cp_\rho(m).$$

Conversely, consider a subpath $\gamma$ of length $n$ in $\rho$. Let $\hat \gamma$ be the smallest full subpath containing it. This yields a word of length at most $n$ in $R^*$ (different subpaths which are equal as abstract paths may produce different  words, but we do not care). At most  $D$ different paths can produce a given $\hat \gamma$, for some constant $D$ depending on $C$ and the number of edges of $G$, and we get
$$\cp_\rho(n)\le D\sum_{m=1}^{n}\cp_Y(m).$$
  As in the proof of Lemma \ref{cola}, we conclude that $\cp_\rho$ is equivalent to  $\cp_Y$ and we can apply Pansiot's theorem to compute $\cp_\rho$. Moreover, the growth of a term (as a path) is equivalent to  that of the letter of $R$  representing it under $\kappa$, so the divergence used in the proposition  is the same as the divergence that appears in Pansiot's theorem.

This concludes the proof of  the second assertion of Proposition \ref{compquadr}.

 \subsection{Quadratic lower bound} \label{qlb}
 We split the proof of the third assertion of Proposition \ref{comptot} in two: lower bound (in this subsection) and upper bound. 

 We shall deduce a quadratic lower bound for $\cp_\rho(n)$ from
the next lemma. Recall (Definition \ref{totcomp})  that the total complexity $\cp_\delta(n)$ of a path $\delta$  
 is the number of distinct   paths of length $n$ appearing as a subpath of some $f^k_\sharp(\delta)$.

\begin{lem} \label{entour}
The total complexity $\cp_\delta(n)$ is at least quadratic 
if $\delta$ is an edge path with a complete splitting refining a splitting of one of the following forms:
\begin{itemize}
\item  
  $v\theta w$, where $\theta$ is an exceptional path and $v,w$ are growing;
  \item $v\theta w$, where $\theta$ is a linear edge with $f(\theta)=\theta u$,   the paths  $v,w$ are growing, and $w$ has at least two growing terms in its complete splitting.
\end{itemize}
\end{lem}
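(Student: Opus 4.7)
The plan is to exhibit $\Omega(n^2)$ pairwise distinct (up to bounded multiplicity) length-$n$ subpaths across the iterates $\{f^k_\sharp(\delta)\}_{k\ge 0}$, parameterized by a pair $(k,m)$ -- the iteration count $k$ and a horizontal offset $m$. In both cases $\theta$ contributes a polynomially growing piece $\theta_k:=f^k_\sharp(\theta)$ of length linear in $k$, and sliding a length-$n$ window so that $\theta_k$ sits at variable position inside it gives two independent degrees of freedom.

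In the exceptional case, writing $\theta=eu^p\bar e'$ with $f(e)=eu^d$, $f(e')=e'u^{d'}$, $d\ne d'$, one checks that $\theta_k=eu^{p_k}\bar e'$ with $p_k=p+k(d-d')$, so $|\theta_k|$ is linear in $k$. Since $v\cdot\theta\cdot w$ is a splitting and $v,w$ grow, $f^k_\sharp(\delta)=V_k\,\theta_k\,W_k$ with $|V_k|,|W_k|\to\infty$. For each $k$ with $|\theta_k|\le n$ and each $m\in\{0,\ldots,n-|\theta_k|\}$ I let $\sigma_{k,m}$ be the length-$n$ subpath of $f^k_\sharp(\delta)$ starting $m$ positions before $\theta_k$; this yields $\sum_k(n-|\theta_k|+1)=\Omega(n^2)$ pairs. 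Distinctness up to bounded multiplicity follows because $\sigma_{k,m}$ contains the substring $eu^{p_k}\bar e'$, from which the positions of $e$ and $\bar e'$ and the length of the intervening $u$-block determine $(k,m)$ uniquely modulo a bounded ambiguity produced by possible $u$-tails in the $V_k$-suffix or $W_k$-prefix.

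In the linear case $\theta_k=\theta u^k$ and $f^k_\sharp(\delta)=V_k\,\theta u^k\,W_k$. The same $(k,m)$-parameterization is attempted, but there is a genuine obstruction: if the limiting right-infinite word $W_\infty:=\lim_k f^k_\sharp(w)$ were equal to $u^\infty$, then for distinct $k$'s sharing a common offset $m$ the strings $\sigma_{k,m}$ would coincide, giving only $O(n)$ subpaths. The hypothesis that $w$ has at least two growing terms is used precisely to rule this out. Decomposing $w=w_0\tau_1\hat w$ where $w_0$ is the maximal Nielsen prefix, $\tau_1$ is the first growing term, and $\hat w$ still contains a growing term $\tau_2$, a case analysis on the type of $\tau_1$ (EG edge, linear NEG edge, exceptional path, connecting path) shows that $W_\infty\ne u^\infty$ and that the first non-$u$ letter of $W_\infty$ sits at a position $b_k$ growing at most linearly in $k$: when $\tau_1$ provides a bounded-distance barrier this is immediate, and when $\lim_k f^k_\sharp(\tau_1)=u^\infty$ in a degenerate way, the barrier is supplied by $\tau_2$ at distance $|f^k_\sharp(\tau_1)|+O(1)=O(k)$. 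Given such a linear barrier, for each $k$ with $k|u|+b_k\le n$ there are $\Omega(n-k|u|-b_k)$ admissible $m$, and summing over the linear range of $k$ recovers $\Omega(n^2)$ distinct subpaths.

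The main obstacle is this barrier analysis in the linear case: verifying, through the case check on $\tau_1$, that the two-growing-terms hypothesis really forces $W_\infty\ne u^\infty$ and keeps $b_k$ linear in $k$ -- an exponential $b_k$ would only deliver $n\log n$. The exceptional case sidesteps this subtlety because the closing edge $\bar e'$ of $\theta_k$ is itself a built-in barrier, which is why no condition beyond growth of $w$ is needed there.
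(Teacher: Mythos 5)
Your overall strategy is the paper's: parameterize length-$n$ subpaths by an iteration index $k$ and an offset $m$, and show the resulting $\Omega(n^2)$ pairs give distinct abstract paths. In the exceptional case the computation $p_k = p + k(d-d')$, the window count, and the distinctness argument (recovering $k$ from the length of the low-height $u$-block between $e$ and $\bar e'$, with length forced to be at least half the window) match the paper essentially verbatim; the paper makes the uniqueness of that $u$-block precise via a height-plus-length characterization, which you gesture at with ``bounded ambiguity.''

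In the linear case the count you propose (sum of $n - k|u| - b_k$ over $k$ with $b_k = O(k)$) is also the paper's, but your justification of $b_k = O(k)$ contains a real confusion. You claim the two-growing-terms hypothesis shows $W_\infty := \lim_k f^k_\sharp(w) \neq u^\infty$; this is both inconsistent with your own ``$b_k$ grows linearly in $k$'' (a linear $b_k\to\infty$ is precisely the statement that $W_\infty = u^\infty$), and it is false in general. In the paper's hardest subcase -- $y$ the first growing term of $w$ is NEG with $f(y)=\tau y$, $\tau$ Nielsen, and $x\tau^k$ a $u$-power up to a remainder of length $< |u|$ -- one has $W_\infty = x\tau^\infty = u^\infty$. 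The paper does not exclude this; it switches the anchor from $\theta u^k$ to $u^k x\tau^k$ (still of length $\sim k(|u|+|\tau|)$) and gets the $\Omega(n)$ sliding positions from the growth of $z$, the part of $w$ after $y$. That is what the two-growing-terms hypothesis is really for: it guarantees $z$ (containing your $\tau_2$) is growing, i.e.\ provides room on the right; the linear bound on $b_k$ itself comes from the inductive reduction that makes $\tau$ Nielsen. Your barrier estimate $b_k = |f^k_\sharp(\tau_1)| + O(1)$ is only $O(k)$ \emph{after} that reduction -- without it $|f^k_\sharp(\tau_1)|$ can be superlinear -- and you flag this as the main obstacle but do not carry it out, whereas the paper does (``Using induction, one may assume $\tau$ is a Nielsen path''). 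So: same approach, same count, correct conclusion, but the explanation of what the hypothesis buys and why $b_k$ is linear needs to be reworked along the paper's case split rather than the (false) dichotomy $W_\infty \ne u^\infty$.
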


\begin{proof}
In the first case we write  $\theta=eu^p\bar e'$, with $f(e)=eu^d$ and $f(e')=e'u^{d'}$.
We then have  $f^k_\sharp(\delta)=v_keu^{a_k}\bar e'w_k$, where $ | eu^{a_k}\bar e' | $ is an affine function $ak+b$  with $a>0$ and   $ | v_k | , | w_k | $  grow at least like $ck$ for some $c>0$. Given $n$, consider, for all $k$ such that $n\ge ak+b\ge n/2$ and $ ak+b+cn \ge n$, 
the subpaths $\eta$ of length $n$ of $f^k_\sharp(\delta)$ containing the subpath $eu^{a_k}\bar e'$. Assuming, as we may, $c<\frac12$, and neglecting $b$, we see that $ak$ varies between $n-cn$ and $n$.

 We claim that the number of subpaths thus obtained is equivalent to $n^2$.    
 Indeed, since $ | v_k | , | w_k | $ grow at least like $ck$, there is enough space on either side of $eu^{a_k}\bar e'$  to construct, for each $k$, at least $n-ak$ subwords of $f^k_\sharp(\delta)$ containing $eu^{a_k}\bar e'$.  We thus get  a lower bound $\displaystyle\sum_{   {n-cn}\le ak\le{  n}}(n-ak) $, which is quadratic in $n$.

 Moreover,
the paths that we constructed are all distinct as abstract paths: $\eta$ determines $k$  because the subpath $u^{a_k}$ may be characterized as the only maximal subpath   of $\eta$ whose height   is less than $h(e)$ and $h(e')$, and whose length is at least $n/2$.   

 In the second case, we write $f^k_\sharp(\delta)=v_k\theta u^{ k} w_k$. We now consider $k$ such that $k | u | \ge n/2$ and subpaths $\eta$ of length $n$ containing $\theta u^k$. To show that they are distinct, we consider the maximal subpath   of $\eta$ whose height   is less than $h(\theta)$ and whose length is at least $n/2$. There is an added difficulty 
 because $u$ may be a prefix of $w_k$. 
 
 Write $w=xyz$ with $x$ a (possibly empty)  
   Nielsen path, $y$ a  growing term, and $z$ growing. Using induction on height, we may assume that $y$ is not a connecting path.  
    If it is an EG edge, or an exceptional path, or an   NEG edge with $f(y)=y\tau$, 
     every prefix of $w_k$ of the form $u^p$ is contained in $x$, 
 so distinctness holds.

 The only case remaining is when $y$ is an NEG edge with $f(y)=\tau y$. Using induction, one may assume that  $\tau$ is a Nielsen path, so $f^k_\sharp(\delta)=v_k\theta u^{ k} x\tau^kyz_k$. Distinctness holds 
 unless $x\tau^k$ is a power of $u$ followed by a path of length less than $ | u | $. If this happens, we consider, for $k$ such that $k (| u |+| \tau | )\ge n/2$, the subpaths $\eta$ of length $n$ of $f^k_\sharp(\delta)$ containing $u^{ k} x\tau^k$. 
  As in the first case,  
 the number of these paths is equivalent to $n^2$ because $| v_k |$ and $|z_k|$   grow at least  linearly.
\end{proof}

Suppose that $\gamma$ is as in Proposition \ref{comptot}, and neither 1 nor 2 applies. By 
  Proposition \ref{growth}, there is $k_0$ such that, for $k\ge k_0$, the path   $f^k_\sharp(\gamma)$ contains a subpath $\theta_k$ which is an exceptional path   or a linear edge. We apply the lemma to $\theta=\theta_{k_0+2}$ (with the obvious modifications if $f(\theta)$ equals $u\theta$ rather than $\theta u$)  to get a quadratic lower bound for $p_\gamma(n)$, hence for 
  $p_\rho(n)$.

\subsection{Quadratic upper bound}

 There remains to establish a quadratic upper bound 
for the   complexity $\cp_\rho(n)$, with $\rho$ as in Proposition \ref{compquadr}. We start with a lemma.

  \begin{lem} [Stabilization of prefixes] \label{stabpref}
  Let $f:G\to G$ be a completely split train track map, and $\gamma$   an arbitrary edge path.
There exists a power $g$ of $f$ such that, given any edge path $\gamma$, there is $C>0$ such that, for any $n$,  the $n$-prefix of $g^k_\sharp(\gamma)$, viewed as an abstract path,  is independent of $k$ for $k>Cn$.
\end{lem}

\begin{proof}
 After replacing $\gamma$ by some $f^q_\sharp(\gamma)$, we may assume that $\gamma$ is completely split.
There is a uniform linear lower bound for the length of $f^k_\sharp(\tau)$, for $\tau$ a growing term, so we
may assume that $\gamma$ is a  single growing term. 
Choose $g$ such that $g(e)$ and $g^2_{\sharp}(e)$ have the same initial edge $a_e$  if $e$ is any   EG edge.

The result is clear  if  $\gamma$ is an exceptional path, and also if it is an  NEG edge $e$ such that  $f(e)=eu$ because $g^k_\sharp(\gamma)$ is a prefix of $g^{k+1}_\sharp(\gamma)$. 
If $\gamma$ is an    EG edge  $e$, then $g^k_\sharp(\gamma)$ starts with $g^{k-1}_\sharp(a_e)$; in this case, stabilization occurs for $k=O(\log n)$.
If $\gamma$ is an NEG edge $e$ with $f(e)=ue$, the result is clear if $u$ is pre-Nielsen, and follows by induction on height if $u$ is growing. We  also use induction if $\gamma$ is a connecting path. 
\end{proof}

Replacing $f$ by a power, we may assume that Lemma \ref{stabpref} applies.
We fix a number $K$ which is large compared to   $| \gamma | $   and the length of terms which are not exceptional paths or INP's.

  By abuse, we view $f_\sharp$ as a map from $\rho$ to $\rho$ sending $e$ to $e\gamma$ and $f^k_\sharp(\gamma)$ to $f^{k+1}_\sharp(\gamma)$. 
Consider $n>K$ and  a subpath $w_0$   of length $n$ in $\rho$. Let $w_1$ be the smallest full subpath of $\rho$   such that $f_\sharp(w_1)$ contains $w_0$. Then define $w_2$
 similarly and keep going until the first $p$ for which    $ | w_{p+1} | \le K< | w_p | $ (see Figure   \ref{upper bound}). Such a $p$ exists because 
 $n>K> | \gamma | $.

\begin{figure}
\begin{center}
\begin{tikzpicture}

\draw [thin] (-7,0)--(7,0);
\draw [very thick] (-4,0)--(4,0);
\draw (-4,0) node {$\bullet$} ;
\draw (4,0) node {$\bullet$} ;

\draw (-6,0) node[fill,circle,inner sep=0pt,minimum size=2pt] {} ; 
\draw (-5.6,0) node[fill,circle,inner sep=0pt,minimum size=2pt] {} ; 
\draw (-5.2,0) node[fill,circle,inner sep=0pt,minimum size=2pt] {} ; 
\draw (-4.8,0) node[fill,circle,inner sep=0pt,minimum size=2pt] {} ; 
\draw (-4.4,0) node[fill,circle,inner sep=0pt,minimum size=2pt] {} ;  
\draw (-3.6,0) node[fill,circle,inner sep=0pt,minimum size=3pt] {} ; 
\draw (-3.2,0) node[fill,circle,inner sep=0pt,minimum size=3pt] {} ; 
\draw (-2.8,0) node[fill,circle,inner sep=0pt,minimum size=3pt] {} ; 
\draw (-2.4,0) node[fill,circle,inner sep=0pt,minimum size=3pt] {} ;  
\draw (-2,0) node[fill,circle,inner sep=0pt,minimum size=3pt] {} ;  
\draw (-1.6,0) node[fill,circle,inner sep=0pt,minimum size=3pt] {} ; 
\draw (-1.2,0) node[fill,circle,inner sep=0pt,minimum size=3pt] {} ; 
\draw (-0.8,0) node[fill,circle,inner sep=0pt,minimum size=3pt] {} ; 
\draw (-0.4,0) node[fill,circle,inner sep=0pt,minimum size=3pt] {} ;  
\draw (0,0) node[fill,circle,inner sep=0pt,minimum size=3pt] {} ;  
\draw (0.4,0) node[fill,circle,inner sep=0pt,minimum size=3pt] {} ;  
\draw (0.8,0) node[fill,circle,inner sep=0pt,minimum size=3pt] {} ;  
\draw (1.2,0) node[fill,circle,inner sep=0pt,minimum size=3pt] {} ;  
\draw (1.6,0) node[fill,circle,inner sep=0pt,minimum size=3pt] {} ;  
\draw (2,0) node[fill,circle,inner sep=0pt,minimum size=3pt] {} ;  
\draw (2.4,0) node[fill,circle,inner sep=0pt,minimum size=3pt] {} ;  
\draw (2.8,0) node[fill,circle,inner sep=0pt,minimum size=3pt] {} ;  
\draw (3.2,0) node[fill,circle,inner sep=0pt,minimum size=3pt] {} ;  
\draw (3.6,0) node[fill,circle,inner sep=0pt,minimum size=3pt] {} ;  
\draw (4.4,0) node[fill,circle,inner sep=0pt,minimum size=2pt] {} ;  
\draw (4.8,0) node[fill,circle,inner sep=0pt,minimum size=2pt] {} ;  
\draw (5.2,0) node[fill,circle,inner sep=0pt,minimum size=2pt] {} ;  
\draw (5.6,0) node[fill,circle,inner sep=0pt,minimum size=2pt] {} ;  
\draw (6,0) node[fill,circle,inner sep=0pt,minimum size=2pt] {} ;  
\draw (6.4,0) node[fill,circle,inner sep=0pt,minimum size=2pt] {} ;

\draw [thin] (-7,1)--(7,1);
\draw [very thick] (-2,1)--(2.4,1);
\draw (-2,1) node {$\bullet$} ;
\draw (2.4,1) node {$\bullet$} ;

\draw (-3.2,1) node[fill,circle,inner sep=0pt,minimum size=2pt] {} ; 
\draw (-2.8,1) node[fill,circle,inner sep=0pt,minimum size=2pt] {} ; 
\draw (-2.4,1) node[fill,circle,inner sep=0pt,minimum size=2pt] {} ; 
\draw (-1.6,1) node[fill,circle,inner sep=0pt,minimum size=3pt] {} ; 
\draw (-1.2,1) node[fill,circle,inner sep=0pt,minimum size=3pt] {} ; 
\draw (-0.8,1) node[fill,circle,inner sep=0pt,minimum size=3pt] {} ; 
\draw (-0.4,1) node[fill,circle,inner sep=0pt,minimum size=3pt] {} ; 
\draw (0,1) node[fill,circle,inner sep=0pt,minimum size=3pt] {} ;  
\draw (0.4,1) node[fill,circle,inner sep=0pt,minimum size=3pt] {} ;  
\draw (0.8,1) node[fill,circle,inner sep=0pt,minimum size=3pt] {} ;  
\draw (1.2,1) node[fill,circle,inner sep=0pt,minimum size=3pt] {} ;  
\draw (1.6,1) node[fill,circle,inner sep=0pt,minimum size=3pt] {} ;  
\draw (2,1) node[fill,circle,inner sep=0pt,minimum size=3pt] {} ; 
\draw (2.8,1) node[fill,circle,inner sep=0pt,minimum size=2pt] {} ; 
\draw (3.2,1) node[fill,circle,inner sep=0pt,minimum size=2pt] {} ; 
\draw (3.6,1) node[fill,circle,inner sep=0pt,minimum size=2pt] {} ; 
\draw (4,1) node[fill,circle,inner sep=0pt,minimum size=2pt] {} ;

\draw [very thin, densely dashed] (-2,1)--(-4.8,0);
\draw [very thin, densely dashed] (2.4,1)--(5.2,0);
\draw [very thin, densely dashed] (-1.6,1)--(-2.4,0);
\draw [very thin, densely dashed] (2,1)--(3.2,0);

\draw [thin] (-7,2)--(7,2);
\draw [very thick] (-0.8,2)--(1.2,2);
\draw (-0.8,2) node {$\bullet$} ;
\draw (1.2,2) node {$\bullet$} ;

\draw (-1.6,2) node[fill,circle,inner sep=0pt,minimum size=2pt] {} ; 
\draw (-1.2,2) node[fill,circle,inner sep=0pt,minimum size=2pt] {} ; 
\draw (-0.4,2) node[fill,circle,inner sep=0pt,minimum size=3pt] {} ; 
\draw (0,2) node[fill,circle,inner sep=0pt,minimum size=3pt] {} ; 
\draw (0.4,2) node[fill,circle,inner sep=0pt,minimum size=3pt] {} ; 
\draw (0.8,2) node[fill,circle,inner sep=0pt,minimum size=3pt] {} ; 
\draw (1.6,2) node[fill,circle,inner sep=0pt,minimum size=2pt] {} ; 
\draw (2,2) node[fill,circle,inner sep=0pt,minimum size=2pt] {} ;

\draw [very thin, densely dashed] (-1.2,2)--(-0.7,2.5);
\draw [very thin, densely dashed] (1.6,2)--(1,2.5);

\draw [very thin, densely dashed] (-0.8,2)--(-2.4,1);
\draw [very thin, densely dashed] (1.2,2)--(3.6,1);
\draw [very thin, densely dashed] (-0.4,2)--(-1.2,1);
\draw [very thin, densely dashed] (0.8,2)--(1.6,1);

\draw [very thick, dotted] (0.2,2.5)--(0.2,3.2);

\draw [thin] (-7,4)--(7,4);
\draw [very thick] (-0.4,4)--(0.8,4);
\draw (-0.4,4) node {$\bullet$} ;
\draw (0.8,4) node {$\bullet$} ;

\draw (-1.6,4) node[fill,circle,inner sep=0pt,minimum size=2pt] {} ; 
\draw (-1.2,4) node[fill,circle,inner sep=0pt,minimum size=2pt] {} ; 
\draw (-0.8,4) node[fill,circle,inner sep=0pt,minimum size=2pt] {} ; 
\draw (0,4) node[fill,circle,inner sep=0pt,minimum size=3pt] {} ; 
\draw (0.4,4) node[fill,circle,inner sep=0pt,minimum size=3pt] {} ; 
\draw (1.2,4) node[fill,circle,inner sep=0pt,minimum size=2pt] {} ; 
\draw (1.6,4) node[fill,circle,inner sep=0pt,minimum size=2pt] {} ; 
\draw (2,4) node[fill,circle,inner sep=0pt,minimum size=2pt] {} ;

\draw [very thin, densely dashed] (-0.4,4)--(-0.9,3.5);
\draw [very thin, densely dashed] (0.8,4)--(1.5,3.5);

\draw [thin] (-7,5)--(7,5);
\draw [very thick] (0,5)--(0.4,5);
\draw (0,5) node {$\bullet$} ;
\draw (0.4,5) node {$\bullet$} ;

\draw (-0.8,5) node[fill,circle,inner sep=0pt,minimum size=2pt] {} ; 
\draw (-0.4,5) node[fill,circle,inner sep=0pt,minimum size=2pt] {} ; 
\draw (0.8,5) node[fill,circle,inner sep=0pt,minimum size=2pt] {} ; 
\draw (1.2,5) node[fill,circle,inner sep=0pt,minimum size=2pt] {} ;

\draw [very thin, densely dashed] (0,5)--(-0.8,4);
\draw [very thin, densely dashed] (0.4,5)--(1.6,4);

\draw [decorate, decoration = {calligraphic brace, mirror}] (-4,-0.2) --  (4,-0.2);
\draw (0,-0.5) node {$w_0$} ;
\draw [decorate, decoration = {calligraphic brace, mirror}] (-2,0.8) --  (2.4,0.8);
\draw (0.2,0.5) node {$w_1$} ;
\draw [decorate, decoration = {calligraphic brace, mirror}] (-0.8,1.8) --  (1.2,1.8);
\draw (0.2,1.5) node {$w_2$} ;
\draw [decorate, decoration = {calligraphic brace, mirror}] (-0.4,3.8) --  (0.8,3.8);
\draw (0.2,3.5) node {$w_{p-1}$} ;
\draw [decorate, decoration = {calligraphic brace, mirror}] (0,4.8) --  (0.4,4.8);
\draw (0.2,4.5) node {$w_p$} ;

\end{tikzpicture}

\end{center}

    \caption[Entry for the List of Figures (LoF)]{}\label{upper bound}

\end{figure}

 We  consider the complete splitting of $w_p$. 
 There are two cases.  
 Both may occur for subpaths $w_0$ representing the same abstract path,  
 but in order to bound $\cp_\rho$ up to equivalence we may  choose one for every abstract $w_0$.
 
  First suppose that $w_p$ has  an interior  growing term $\tau$, and call $x$ its origin. We then have $f^{p}_\sharp(\tau)\inc w_0$.   
 Since $\tau$ grows at least linearly and $ | w_0 | =n$, we get 
an inequality $p\le Cn$ with $C$ depending only on $f,\gamma$ and $K$  because $ | w_p | $ is bounded.

  Recall that we want an upper bound for $\cp_\rho(n)$, the number of distinct abstract paths $w_0$ of length $n$ contained in    
 $\rho$.  
  Such a path is determined by the abstract path $w_p$, the number $p$, and the position of $f^p(x)$ within $w_0$. There are boundedly many possibilities for $w_p$ and $x$.  Since there  are $n+1$ possibilities for the position of  $f^p(x)$, and $p\le Cn$, we get the required quadratic upper bound for   $\cp_\rho$.

   The second case is when the complete splitting of $w_p$ has no interior growing term. 
 If $w_p$ is a single term, it must be an exceptional path (a trivial case), so we can assume that $w_p$ may be split   at a vertex $x$ as a concatenation $w_p=vv'$  (one of $v,v'$ may be non-growing).
 As in the first case we have $f^p(x)\in w_0$. The path $w_0$ consists of a suffix of  $f^k_\sharp(v)$ and a prefix of $f^k_\sharp(v')$,
 and the quadratic bound for $\cp_\rho$ follows from stabilization of prefixes (Lemma \ref{stabpref}).

\section{Complexity of  fixed points on the boundary}
\label{cfp}

Let $\alpha$ be an automorphism of a finitely generated free group $F$. As explained in Subsection \ref{cfg}, it acts on the boundary $\bo F$, and we let $X\in\bo F$ be a fixed point. We are interested in the complexity $\cp_X$ of $X$, which is well-defined up to equivalence by Corollary \ref{bdef}.

There are three possibilities for $X$ (see \cite{GJLL}). 
First, $\alpha$ may have a non-trivial fixed subgroup $\Fix \alpha =\{g\in F\mid  \alpha(g)=g\}$, and $X$ may be a point of the boundary of $\Fix \alpha$ (the most trivial example being when $\alpha$ is the identity); in this case $X$ may be fairly arbitrary, and there is no hope  of describing its complexity. 
By Proposition 1.1 of \cite{GJLL}, fixed points $X$ which do not  belong to  $ \bo \Fix\alpha$  are   either  attracting or   repelling (attracting for $\alpha\m$)    for the action of $\alpha$ on $F\cup\bo F$.  

\begin{rem} \label {pasper}
If   $X\in\bo F$ is  rational, i.e.\ of the form $g^\infty=\lim_{n\to+\infty}g^n$ for some  $g\in F$, then $\alpha(X)=\alpha(g)^\infty$, so  $\alpha(X)=X$   implies $\alpha(g)=g$ and $X\in \bo \Fix\alpha$. Thus 
an attracting or repelling fixed point  $X$ cannot be rational:
 viewed as  a right-infinite word,  it  is not ultimately periodic. In particular, its complexity is at least linear by      \cite{morse}.
 \end{rem}
 
\begin{thm} \label{main}
Let $\alpha$ be an automorphism of a finitely generated free group $F$, and let $X\in\bo F$ be fixed by $\alpha$. If $X\notin\bo\Fix\alpha$ (i.e.\ $X$ is attracting or repelling), its complexity $\cp_X$ is equivalent to $n,n\log\log n, n\log n$, or $n^2$.
\end{thm}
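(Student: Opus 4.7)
The plan is to reduce Theorem \ref{main} to Proposition \ref{comptot} by realizing $X$ as the terminal end of a ray of the form $\rho = e\gamma f_\sharp(\gamma) f_\sharp^2(\gamma)\cdots$ in a graph equipped with a completely split train track map.

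First I would perform some standard reductions. Since $\alpha$ and $\alpha\m$ have the same fixed points on $\bo F$, with attracting and repelling roles exchanged, I may assume $X$ is attracting for $\alpha$. Since $\cp_X$ depends only on $X$, and since a point attracting for $\alpha$ is again attracting for every positive power $\alpha^k$ (and so still outside $\bo\Fix\alpha^k$ by the trichotomy of \cite{GJLL}), I may freely replace $\alpha$ by any of its positive powers. Invoking \cite{recogn}, I may then assume $\alpha$ is represented by a completely split train track map $f:G\to G$; by passing to yet another power, I may further assume that every periodic vertex and every periodic direction of $f$ is in fact fixed.

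Second, I would invoke the standard correspondence between attracting fixed points of $\alpha$ on $\bo F$ and eigenrays of $f$, as developed in \cite{GJLL, Tits1}. This produces a vertex $v\in G$ fixed by $f$, together with an oriented edge $e$ issuing from $v$ such that $f(e) = e\cdot\gamma$ with $\gamma$ non-trivial, having the property that a suitable lift to $\tilde G$ of the infinite concatenation $\rho := e\gamma f_\sharp(\gamma) f_\sharp^2(\gamma)\cdots$ has $X$ as its terminal end in $\cale(\tilde G)$. Because $f$ is a completely split train track map, $f(e)$ is completely split, and the fact that $e$ is a fixed direction forces $e$ to be the first term in this splitting, so $\gamma$ is itself completely split. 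Hence $\rho$ is exactly the kind of ray considered in Proposition \ref{comptot}, and Lemma \ref{crayon} gives $\cp_X \sim \cp_\rho$.

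Third, I would apply Proposition \ref{comptot} directly to $\rho$: it asserts that $\cp_\rho$ is bounded, linear, equivalent to $n\log\log n$, equivalent to $n\log n$, or quadratic. The bounded case is ruled out by Remark \ref{pasper}: bounded complexity would force the right-infinite word $X$ to be ultimately periodic by the Hedlund--Morse theorem, hence rational, hence by Remark \ref{pasper} in $\bo\Fix\alpha$, contradicting the hypothesis. The four remaining equivalence classes $n,\ n\log\log n,\ n\log n,\ n^2$ are precisely those stated in the theorem.

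The main obstacle is the second step: producing the ray $\rho = e\gamma f_\sharp(\gamma)\cdots$ representing $X$ in the exact form demanded by Proposition \ref{comptot}. The index theory of \cite{GJLL} guarantees that $X$ corresponds to a fixed direction at a (after raising to a power) fixed vertex of $G$, but one must be careful to arrange that the eigenray begins with a single edge $e$ with $f(e)$ literally starting with $e$, and that $\gamma$ is completely split -- both of which may require an additional passage to a power and/or a careful choice of train track representative. Once this geometric setup is secured, the theorem follows at once from Proposition \ref{comptot} and Remark \ref{pasper}.
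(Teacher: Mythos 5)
Your overall strategy coincides with the paper's: reduce to an eigenray $\rho = e\cdot\sigma\cdot f_\sharp(\sigma)\cdot f_\sharp^2(\sigma)\cdots$ and invoke Proposition~\ref{comptot}, ruling out the bounded case via Remark~\ref{pasper}. You also correctly locate the crux in step two. However, your proposed resolution of that crux is not adequate, and this is a genuine gap.

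The issue is that Lemma~4.36(2) of \cite{recogn}, which produces the fixed vertex $x$, the edge $e$ with $f(e)=e\cdot\sigma$, and the eigenray $\rho$ realizing $X$, requires the automorphism to be \emph{principal} (Definition~3.1 of \cite{recogn}). This is a condition on the lift of $\Phi$ to $\Aut(F)$, and it cannot be achieved merely by raising $\alpha$ to a power or by choosing the train track representative more carefully: a non-principal lift stays non-principal after taking powers, and the set of principal lifts of a given $\Phi$ (up to isogredience) is finite and may not contain $\alpha$. The paper's Lemma~\ref{morceaux} circumvents this by a different device: it replaces $\alpha$ by the automorphism of $F*\F_2$ that equals $\alpha$ on $F$ and the identity on $\F_2$. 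This enlarged automorphism is automatically principal, one can then raise to a power and represent it by a completely split train track map $f:G\to G$, and Lemma~4.36(2) of \cite{recogn} applies. Notice the consequence flagged explicitly in the paper: \emph{$f$ need not represent a power of $\alpha$, and $\pi_1(G)$ need not equal $F$}. Your reliance on Lemma~\ref{crayon} alone, which presupposes a marking $\varphi\colon F\to\pi_1(G)$, therefore does not suffice; one also needs Remark~\ref{restr} (or the equivalent observation that $F$ is a free factor of $F*\F_2$, so complexity of elements of $\bo F$ is unchanged by the enlargement) to conclude $\cp_X\sim\cp_\rho$. In short: the obstacle you identified is real, but the fix is the $F*\F_2$ augmentation plus Remark~\ref{restr}, not a better choice of power or representative within $\Aut(F)$.
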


   For a given $\alpha$, there are only finitely many attracting/repelling fixed points  
up to the action of $ \Fix\alpha$  by \cite{cooper}. On the other hand, if we fix $\Phi\in\Out(F)$ and vary the representative $\alpha$, the theorem applies to infinitely many $X$ (see Example \ref{lamiex}).

   Theorem \ref{main}  is a rewording of Theorem \ref{mainintro2}. It implies   Corollary  \ref{mainintro} because any $X=\lim_{p\to\infty}\alpha^p(g)$ must be rational or attracting (\cite{these}, Th\'eor\`eme 2.15). 
   It is an immediate consequence of  Proposition \ref{comptot} and the following lemma. 

\begin{lem} \label{morceaux}
Let $X$ be as in 
Theorem~\ref{main}.
There exist a   completely split train track map $f:G\to G$ and a ray $\rho$ in $G$ such that:
\begin{itemize}
 \item the complexity of $\rho$ is equivalent to the complexity of $X$;

 \item 
 there is an edge $e$ such that $f(e)=e\cdot\sigma$ with $\sigma$ a growing completely split path, and $\rho=e\cdot\sigma \cdot f_\sharp (\sigma)\cdot f^2_\sharp(\sigma)\cdot f^3_\sharp(\sigma)\cdots$
\end{itemize}
 \end{lem}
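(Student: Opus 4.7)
The plan is to reduce to the attracting case and then construct $\rho$ directly from a completely split train track representative of a power of $\alpha$. Since $X$ is attracting or repelling for $\alpha$ and $\cp_X$ is intrinsic to $X$, we may replace $\alpha$ by $\alpha^{-1}$ if necessary so that $X$ is attracting. By Feighn--Handel \cite{recogn}, some power $\alpha^k$ is represented by a completely split train track map $f:G\to G$; any power of such a map is again completely split train track (see the remarks opening Section \ref{totc}), so we may raise $f$ to a further power to guarantee the stabilization properties used below, in particular that all periodic vertices and periodic directions of $f$ are actually fixed.

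Next, pick the lift $\tilde f : \tilde G \to \tilde G$ of $f$ implementing $\alpha^k$ with respect to the marking. Then $X\in\bo F$ is an attracting fixed point of the homeomorphism induced by $\tilde f$ on $\bo F$. By the analysis of fixed points at infinity of \cite{GJLL}, any such attracting fixed point is reached from a fixed vertex $\tilde v$ of $\tilde f$ along an edge $\tilde e$ at $\tilde v$ whose direction is fixed and attracting under the derivative of $\tilde f$. Projecting to $G$, we obtain a vertex $v$ with $f(v)=v$ and an edge $e$ starting at $v$ with $f(e)=e\cdot\sigma$ for some edge path $\sigma$. Note that $e$ lies in an EG or NEG stratum (not in a $0$-stratum, since $f(e)$ contains $e$). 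The decomposition $f(e)=e\cdot\sigma$ is the start of the complete splitting of $f(e)$, so $\sigma$ is itself completely split. By induction, $f^n_\sharp(e) = e\cdot\sigma\cdot f_\sharp(\sigma) \cdots f^{n-1}_\sharp(\sigma)$ as a splitting for every $n\ge1$, and passing to the limit defines the ray $\rho = e\cdot\sigma\cdot f_\sharp(\sigma)\cdot f^2_\sharp(\sigma) \cdots$ starting at $v$. Since the far endpoint of $\tilde f^n_\sharp(\tilde e)$ converges to $X$ in $\bo F$ (the terminal vertex of $\tilde e$ lies in the basin of the attracting fixed point), $\rho$ represents $X$ in $G$.

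Finally, if $\sigma$ were pre-Nielsen (including pre-trivial), then $f^k_\sharp(\sigma)$ would stabilize as a Nielsen path of bounded length, making $\rho$ eventually periodic and $X$ rational, which contradicts Remark \ref{pasper}; hence $\sigma$ is growing. The complexity equivalence between $\rho$ and $X$ is then exactly Lemma \ref{crayon}. The main obstacle is the second paragraph: identifying the fixed vertex $\tilde v$ and fixed attracting direction $\tilde e$ at which $\rho$ originates, which requires the fixed-point-at-infinity theory of \cite{GJLL} combined with the completely split train track structure to ensure that the decomposition $f(e)=e\cdot\sigma$ is compatible with the complete splitting of $f(e)$, yielding $\sigma$ completely split as required.
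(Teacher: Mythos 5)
Your approach bypasses the paper's key technical device. The paper does not apply the Feighn--Handel machinery directly to a train track map representing a power of $\alpha$ on $F$; it first replaces $\alpha$ by the automorphism of $F*\F_2$ that equals $\alpha$ on $F$ and the identity on the new factor $\F_2$. This is done so that the resulting automorphism is \emph{principal} in the sense of Definition 3.1 of \cite{recogn}, a hypothesis of Lemma 4.36\,(2) of that paper, which is then invoked to produce in one step a fixed vertex $x$ of $f$, a non-linear edge $e$ with origin $x$ satisfying $e\subset f_\sharp(e)\subset f^2_\sharp(e)\subset\cdots$, and the fact that the union of these paths is a ray representing $X$. The paper explicitly flags the cost of this modification --- $f$ need not represent any power of $\alpha$ and $\pi_1(G)$ need not equal $F$ --- a cost that is harmless since only the equivalence class of the complexity of $\rho$ is needed.

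You skip this modification entirely and instead assert, citing \cite{GJLL}, that the lift $\tilde f$ of $f$ implementing $\alpha^k$ has a fixed vertex $\tilde v$ from which $X$ is reached along a fixed attracting direction. This is the gap. Without principality there is no guarantee that the particular lift corresponding to $\alpha^k$ has any fixed vertex in the universal cover, nor that the relevant direction at such a vertex is fixed rather than merely permuted, nor that the eigenray so produced actually converges to $X$; passing to a further power of $f$ controls periodic directions and vertices of $f$ on $G$, but it does not create fixed vertices for a given lift $\tilde f$. The fixed-point theory of \cite{GJLL} does not by itself supply the clean ``fixed vertex plus fixed attracting direction plus completely split eigenray'' package; that is exactly what Lemma 4.36\,(2) of \cite{recogn} provides, and principality is the hypothesis that makes it applicable. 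Your remaining steps --- the induction giving $f^n_\sharp(e)=e\cdot\sigma\cdot f_\sharp(\sigma)\cdots f^{n-1}_\sharp(\sigma)$, the observation via Remark \ref{pasper} that $\sigma$ cannot be pre-Nielsen, and the appeal to Lemma \ref{crayon} --- are sound once the missing step is supplied, but the second paragraph as written does not establish it.
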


 Unfortunately, it is not true that any $X$ may be represented by a ray $\rho$ as in the lemma. The shortest trick to reduce to this case is to use results of \cite{recogn} about principal automorphisms. We do this at the cost of enlarging $F$.

 \begin{proof}
 This relies on Lemma 4.36 (2) of \cite{recogn}. To apply it, we need $X$ to be attracting and $\alpha$ to be principal (see Definition 3.1 of \cite{recogn}), so we   start by modifying $\alpha$. To make $X$ attracting,  we   replace $\alpha$   by $\alpha{\m}$ if needed. To make $\alpha$ principal, we replace it by the automorphism of $F*\F_2$ equal to $\alpha$ on $F$ and the identity on $\F_2$. We then   raise it to a power, so that it is represented by a   completely split train track map $f:G\to G$ (Theorem 4.28 of \cite{recogn}). We view $X$ as an attracting fixed point of the modified $\alpha$.

 We now apply Lemma 4.36 (2) of \cite{recogn}. It provides a vertex $x$ of $G$ with $f(x)=x$ and a non-linear edge $e$ with origin $x$ such that $e\inc f_\sharp(e)\inc f^2_\sharp(e)\inc\cdots$ is an increasing sequence of paths whose union is a ray $\rho$ representing $X$. We   let $\sigma$ be  the path such that   $f(e)=e\sigma$.
 \end{proof}
 
Recall    (see for instance \cite{Kapo}, \cite{counting}) that $\alpha$ is 
\emph{fully irreducible} (also known as iwip) if  no free factor of $F$ other than $\{1\}$ or $ F$ is mapped to a conjugate by a power of $\alpha$, \emph{atoroidal} if there is no non-trivial $\alpha$-periodic conjugacy class in $F$, \emph{polynomially growing} if    every $g\in F$ grows polynomially under iteration of $\alpha$ (equivalently, 
every edge path grows polynomially under iteration of $f$).

\begin{cor}\label{cassimples}  Let  $X$ be an attracting or repelling  fixed point  of $\alpha$  in $\bo F\setminus\bo Fix \alpha$.
\begin{itemize}
 
\item
If $\alpha$ is fully irreducible, the complexity of $X$  is linear.
\item
If $\alpha$ is atoroidal, the complexity of $X$  is $O(n\log n)$.

\item
 If $\alpha$ is polynomially growing, the complexity of $X$ is quadratic. 
 \end{itemize}
\end{cor}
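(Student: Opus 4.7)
The plan is to apply Lemma~\ref{morceaux} to obtain a completely split train track map $f:G\to G$ together with a ray $\rho=e\cdot\sigma\cdot f_\sharp(\sigma)\cdot f_\sharp^2(\sigma)\cdots$ representing $X$, with $f(e)=e\sigma$ and $\cp_\rho\sim\cp_X$, and to then specialize Proposition~\ref{comptot} to each hypothesis. Because $X$ is attracting or repelling, it is not rational by Remark~\ref{pasper}, so $\rho$ is not eventually periodic and $\sigma$ must be growing. Lemma~\ref{morceaux} replaces $\alpha$ by $\alpha\sqcup\id_{\F_2}$ and passes to a power; since the extended automorphism preserves the splitting $F*\F_2$ and acts trivially on $\F_2$, the graph $G$ contains an $f$-invariant subgraph $G_F$ of fundamental group $F$ on which the restriction of $f$ represents a power of $\alpha$, and since $X\in\bo F$ the ray $\rho$ is supported on $G_F$. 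All terms in the complete splittings of the $f_\sharp^k(\sigma)$ therefore lie in $G_F$, and their growth types reflect the $\alpha$-growth of elements of $F$.

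If $\alpha$ is fully irreducible, the condition $X\notin\bo\Fix\alpha$ rules out finite order, so $\alpha$ admits a single dilatation $\lambda>1$ and every growing element of $F$ has growth type $\lambda^n$. Every growing term of $f_\sharp^k(\sigma)$ then has the same growth type $\lambda^n$, so no divergence occurs and the non-divergent subcase of Proposition~\ref{comptot}(2) yields linear complexity. If $\alpha$ is atoroidal, $G_F$ cannot contain any linear edge, since a linear edge $e$ with $f(e)=eu$, $u$ a non-trivial closed Nielsen loop, would produce a periodic conjugacy class $[u]\in F$; consequently $G_F$ contains no exceptional paths, and a straightforward induction on height shows that no edge of $G_F$ can grow polynomially (an NEG edge above EG strata, or above lower NEG edges that grow exponentially by induction, itself grows exponentially). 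Thus every growing term of $f_\sharp^k(\sigma)$ is exponentially growing, case~(3) of Proposition~\ref{comptot} cannot occur, and $\cp_X$ lies in $\{n,\,n\log\log n,\,n\log n\}$; in particular $\cp_X=O(n\log n)$.

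If $\alpha$ is polynomially growing, then no element of $F$ grows exponentially and $G_F$ has no EG stratum, so every growing term of $f_\sharp^k(\sigma)$ has polynomial growth; since $\sigma$ itself is growing, Proposition~\ref{comptot}(2) cannot apply and case~(3) delivers quadratic complexity. The main technical point is the reduction to the $f$-invariant subgraph $G_F$: it ensures that the modifications of Lemma~\ref{morceaux} --- extending $\alpha$ by the identity on $\F_2$ and passing to a power --- do not introduce spurious growing terms outside the $\alpha$-dynamics on $F$, and lets the intrinsic structural properties of $\alpha$ directly constrain the growth of the terms appearing in $f_\sharp^k(\sigma)$.
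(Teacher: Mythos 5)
Your proof follows the paper's overall strategy exactly: reduce to Lemma~\ref{morceaux}, note that $X$ being attracting or repelling forces $\sigma$ to be growing (via Remark~\ref{pasper}), and then specialize the divergence trichotomy of Proposition~\ref{comptot} to each hypothesis. The paper's own proof is much terser --- it simply cites Proposition~\ref{comptot}, asserts there is no divergence in the fully irreducible case (``$G$ consists of a single EG stratum''), rules out linear edges and exceptional paths in the atoroidal case because the loop $u$ would yield a fixed conjugacy class, and invokes case~(3) for the polynomially growing case. What you add is a reduction to an $f$-invariant subgraph $G_F$ carrying $F$. This addresses a real subtlety that the paper's parenthetical remarks pass over: the $f:G\to G$ produced by Lemma~\ref{morceaux} represents a power of $\alpha\sqcup\id_{\F_2}$, not of $\alpha$, so $G$ is \emph{not} a single EG stratum even when $\alpha$ is fully irreducible, and $G$ may contain linear edges whose Nielsen loop $u$ lies in the $\F_2$-part even when $\alpha$ is atoroidal. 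Your instinct to isolate the $F$-carrying part of the filtration is exactly the right fix.

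However, you assert the existence and relevance of $G_F$ without argument, and the claim as stated is slightly too strong. That a CT for $\alpha\sqcup\id_{\F_2}$ has a filtration level whose core realizes the invariant free factor system $\{[F],[\F_2]\}$ is a property of the CT construction (not just of $f$ being a homotopy equivalence preserving the splitting), and should be cited from \cite{recogn} rather than deduced from the phrase ``preserves the splitting.'' More importantly, ``the ray $\rho$ is supported on $G_F$'' is not quite right: the fixed edge $e$ furnished by Lemma~4.36 of \cite{recogn} may well lie outside $G_F$ (this is precisely why the $\F_2$ was adjoined), and so may an initial segment of $\sigma$. What is true --- and suffices, since the complexity is a truncation invariant --- is that a suffix of $\rho$ is supported on $G_F$, so that all but finitely many terms in the complete splittings of $f^k_\sharp(\sigma)$ lie in $G_F$. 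Once that is granted, your three case analyses are correct (and for the atoroidal case you are essentially re-deriving the second half of Proposition~\ref{growth}, which the paper uses directly through Proposition~\ref{comptot}), so the argument is sound modulo filling in the justification of the $G_F$ reduction.
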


\begin{proof}
This follows from   
Proposition \ref{comptot}. If $\alpha$ is fully irreducible, there is no divergence   ($G$ consists of a single EG stratum). If $\alpha$ is atoroidal, there is no linear edge or exceptional path    $\gamma$ (as the path $u$ appearing in the definition of $\gamma$ given in Subsection \ref{cstt} would define a fixed conjugacy class), so $\cp_X$ cannot be quadratic. If $\alpha$ is polynomially growing, we must be in the third case of the proposition.
\end{proof}

\begin{rem}

Using  Remark \ref{restr}, one can show that the complexity is quadratic also if $\alpha$ is arbitrary but $X$ belongs to the boundary of a polynomial subgroup (i.e.\ a subgroup $J$ which is invariant under an automorphism $\beta$ representing the same outer automorphism as $\alpha$, 
with $\beta_{ | J}$  polynomially growing, see Proposition 1.4 of \cite{counting}).
\end{rem}

\section{Recurrence complexity} \label{reccom}

Let $f:G\to G$ be a completely split train track map.
 
 \subsection{Preliminaries}
The complexity function $\cp_\rho$ of a ray is defined by counting the abstract paths appearing at least once in $\rho$. We also defined the (total) complexity $\cp_\gamma$ of a path $\gamma$ by counting the   paths appearing in some $f^p_\sharp(\gamma)$.
We now define the recurrence complexity function $\pr$ by counting the paths appearing infinitely often.

\begin{dfn}[Recurrence complexity]
Given a ray $\rho$ in $G$, we define the recurrence complexity function $\pr_\rho(n)$ as the   number of abstract subpaths of length $n$ appearing infinitely often in $\rho$.

Given an edge path $\gamma$, we define $\pr_\gamma(n)$ as the   number of abstract subpaths of length $n$ appearing in infinitely many $f^k_\sharp(\gamma)$.

  Lemma \ref{cola} is true for recurrence complexity, so that any element $X\in\bo F$ has a recurrence complexity $\pr_X$, well-defined up to equivalence.

\end{dfn}

\begin{example}\label{rcdif}
The recurrence complexity may be smaller than the complexity: if
  $\alpha$ is the automorphism of $\F_3$ (or invertible substitution on 3 letters) defined by $a\mapsto a,b\mapsto  ba,c\mapsto  cb$, the fixed point $X=\lim_{p\to\infty}\alpha^p(c)=cbbaba^2\cdots$ and the path $\gamma=bbaba^2$ have quadratic complexity and linear recurrence complexity    (subwords appearing infinitely often contain $b$ at most once).
\end{example}

   \begin{lem} \label{compar}
   Let   
   $\rho=e\cdot\sigma \cdot f_\sharp (\sigma)\cdot f^2_\sharp(\sigma)\cdot f^3_\sharp(\sigma)\cdots $
   be as in Lemma \ref{morceaux}, and let 
    $\gamma=\sigma \cdot f_\sharp (\sigma)\cdot f^2_\sharp(\sigma)$.
   The functions $p_X$ and $p_\gamma$ are equivalent, and  
$\cp_\gamma\sim \cp_X\ge \pr_X\sim\pr_\gamma\gtrsim n$.  
\end{lem}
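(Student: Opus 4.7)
The plan reduces everything to comparisons between the pair $(\cp_\rho,\pr_\rho)$ and $(\cp_\gamma,\pr_\gamma)$, since Lemma~\ref{crayon}, whose proof via Lemma~\ref{cola} applies verbatim to recurrence complexity, gives $\cp_X\sim\cp_\rho$ and $\pr_X\sim\pr_\rho$. The inequality $\cp_X\ge\pr_X$ is built into the definitions.

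Set $\alpha_k:=|e|+\sum_{j=0}^{k-1}|f^j_\sharp(\sigma)|$, so that the abstract path $f^k_\sharp(\gamma)=f^k_\sharp(\sigma)\cdot f^{k+1}_\sharp(\sigma)\cdot f^{k+2}_\sharp(\sigma)$ coincides with the subpath of $\rho$ running from position $\alpha_k$ to position $\alpha_{k+3}$. The inclusions $\cp_\gamma\le\cp_\rho$ and $\pr_\gamma\le\pr_\rho$ are then immediate. Conversely, define $K(n)$ as the smallest $k$ such that $|f^{k+1}_\sharp(\sigma)|+|f^{k+2}_\sharp(\sigma)|\ge n$: any length-$n$ subpath of $\rho$ starting at a position $p\ge\alpha_{K(n)}$ is contained in $f^k_\sharp(\gamma)$ for the unique $k$ with $\alpha_k\le p<\alpha_{k+1}$, so $\cp_\rho(n)\le\alpha_{K(n)}+\cp_\gamma(n)$. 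Using Proposition~\ref{growth} to estimate $\alpha_{K(n)}$ in terms of the growth type of $\sigma$, one has $\alpha_{K(n)}\lesssim n$ when $\sigma$ grows exponentially and $\alpha_{K(n)}\lesssim n^{(d+1)/d}$ when the growth is polynomial of degree $d\ge1$; combined with the linear lower bound $\cp_\gamma\gtrsim n$ (established below) and, in the polynomial case, with the quadratic lower bound for $\cp_\gamma$ furnished by Lemma~\ref{entour} applied to $\gamma$ (which is long enough in the sense of Remark~\ref{tcf}), this yields $\cp_\rho\lesssim\cp_\gamma$.

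For the recurrence statement, if $w$ appears infinitely often in $\rho$ at positions $p_i\to\infty$, then for $i$ large the occurrence fits in a window $f^{k_i}_\sharp(\gamma)$ with $k_i\to\infty$, so $w\in\pr_\gamma$; conversely each occurrence of $w$ in $f^k_\sharp(\gamma)$ realizes an occurrence in $\rho$ at position at least $\alpha_k\to\infty$, whence $\pr_\rho=\pr_\gamma$. Finally, $\pr_\gamma\gtrsim n$ follows from Remark~\ref{pasper}, which forces $\rho$ to be non-ultimately-periodic: the $\omega$-limit set of $\rho$ under the shift is then an infinite subshift whose subword complexity equals $\pr_\rho$, and by Morse--Hedlund this complexity is at least $n+1$. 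The main subtlety will be the polynomial case of the $\cp_\rho\sim\cp_\gamma$ comparison, where $\alpha_{K(n)}$ may be of the same order as $\cp_\rho$ itself; this is the point at which the ``long enough'' hypothesis underlying Lemma~\ref{entour} is essential rather than cosmetic.
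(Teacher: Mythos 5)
Your overall plan matches the paper's proof of Lemma~\ref{compar} closely: reduce to comparing $\rho$ and $\gamma$, show $\cp_\gamma\le\cp_\rho$ trivially, bound the defect $\cp_\rho-\cp_\gamma$ by the number of positions in the ``prefix'' of $\rho$ that a length-$n$ window would have to start in to escape all windows $f^k_\sharp(\gamma)$, estimate this as $O(n)$ in the exponential case and $O(n^{(d+1)/d})$ in the polynomial case of degree $d\ge1$, and absorb the latter using the quadratic lower bound on $\cp_\gamma$ from Lemma~\ref{entour}. The identification $\pr_\rho=\pr_\gamma$ is also handled as in the paper. So on these points you have, in effect, rediscovered the paper's argument with slightly different bookkeeping (the paper introduces the auxiliary function $g$ rather than $\alpha_{K(n)}$, but the estimates are identical).

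The one place that is genuinely weaker than the paper is the final step, $\pr_\gamma\gtrsim n$. You write that Remark~\ref{pasper} makes $\rho$ non-ultimately-periodic and that ``the $\omega$-limit set of $\rho$ under the shift is \emph{then} an infinite subshift''. That implication (``$\rho$ not eventually periodic $\Rightarrow$ $\omega$-limit set infinite'') is true, but it is not an immediate consequence of non-periodicity of the one-sided word: the $\omega$-limit set only captures what recurs, and there is no a priori reason this should be large. The clean way to close this is precisely the paper's route: since there are only finitely many words of length $\le n+1$ in $\rho$ and each non-recurrent one occurs at bounded positions, a sufficiently deep truncation $\rho_k$ has $\cp_{\rho_k}(m)=\pr_\rho(m)$ for $m\le n+1$; if $\pr_\rho(n+1)=\pr_\rho(n)$, Morse--Hedlund applied to the one-sided word $\rho_k$ forces $\rho_k$ (hence $\rho$) to be ultimately periodic, contradicting Remark~\ref{pasper}. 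This is essentially the proof of the implication you invoke, so you should spell it out rather than cite the conclusion; as written, the logical load is being carried by an unjustified ``then''. Once that step is made explicit, your argument and the paper's coincide.
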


\begin{proof}
Clearly $\cp_\gamma\le \cp_X$.  To prove  $\cp_X\lesssim \cp_\gamma$, we have to control the subpaths $\tau$ of $\rho$ which are not contained in any $f^k_\sharp(\gamma)$. 
Note that any such $\tau$ contains some $f^\ell_\sharp(\sigma)$, hence has to be long if it is far from the origin of $\rho$.  More precisely, there is a function $g $, depending on the growth type of $\sigma$, such that   $\tau$  
is contained in the $g( | \tau | )$-prefix of $\rho$, and we have $\cp_X(n)\le \cp_\gamma(n)+O(g(n))$.

To compute $g$, we have to estimate the distance from the subpath $f^\ell_\sharp(\sigma)$ to the origin of $\rho$  in terms of its length, in other words to 
bound $\sum_{k=1}^\ell | f^ k_\sharp(\sigma) | $ in terms of $ | f^\ell_\sharp(\sigma) | $.

Let $k^d \lambda^k $ be the growth type of $\sigma$. If $\lambda>1$, then   $\sum_{k=1}^\ell k^d\lambda^k  =O(  \ell^d \lambda^\ell)$ and $g$ is linear, so $\cp_X(n)\le \cp_\gamma(n)+O(n)$, hence $\cp_X\lesssim \cp_\gamma$.  If $\lambda=1$, then   $\sum_{k=1}^\ell   k^d =O(    \ell^{d+1})$ and $g(n)\sim n^{\frac{d+1}d}$. We deduce $\cp_X(n)\le \cp_\gamma(n)+O(n^{\frac{d+1}d})$, and $\cp_X\lesssim \cp_\gamma$ because ${\frac{d+1}d}\le 2$ and $\cp_\gamma$ is quadratic by Lemma \ref{entour}.

Clearly $\cp_X\ge \pr_X\ge\pr_\gamma$, 
and $\pr_X\lesssim \pr_\gamma$ because 
any path contributing to $\pr_X$ appears arbitrarily far from the origin of $\rho$.  The recurrence complexity of $X$ is at least linear because the function $\pr_\rho$ is increasing:
if $\pr_\rho(n+1)=\pr_\rho(n)$ for some $n$, some truncation $ \rho _k$ of $\rho$ satisfies $\cp_{\rho _k}(n+1)=\cp_{\rho _k}(n)$ hence is 
  ultimately periodic   by \cite{morse}, a contradiction.
  \end{proof}

\subsection{Recurrence complexity of paths}

\begin{prop} \label{comprec}
 The recurrence complexity $\pr_{\gamma}$ of any path $\gamma$  in $G$ is equivalent to  $1,n,n\log\log n, n\log n$, or $n^2$.
\end{prop}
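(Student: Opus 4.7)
The plan is to reduce Proposition~\ref{comprec} to Lemma~\ref{compar} and Proposition~\ref{comptot} by analyzing the block structure of $f^k_\sharp(\gamma)$ and then classifying $\pr_\tau$ for each growing term $\tau$.

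First, after replacing $\gamma$ by $f^N_\sharp(\gamma)$ for large $N$, which leaves $\pr_\gamma$ unchanged up to equivalence, I may assume $\gamma$ is completely split with terms $\gamma_1,\dots,\gamma_p$. If no $\gamma_i$ is growing, i.e.\ $\gamma$ is pre-Nielsen, then $f^k_\sharp(\gamma)$ stabilizes as an abstract path for large $k$, so $\pr_\gamma$ is bounded and we are in the $\sim 1$ case. Otherwise, I analyze recurrent subpaths $w$ of $f^k_\sharp(\gamma)=f^k_\sharp(\gamma_1)\cdots f^k_\sharp(\gamma_p)$ via this block decomposition: any block $f^k_\sharp(\gamma_i)$ that $w$ fully contains must be non-growing (otherwise $|f^k_\sharp(\gamma_i)|\to\infty$ forces $|w|\to\infty$), while the two end blocks contribute a suffix of $f^k_\sharp(\gamma_i)$ and a prefix of $f^k_\sharp(\gamma_j)$, which stabilize with $k$ by Lemma~\ref{stabpref} applied to $f$ and to its edge-reversal; Lemma~\ref{long} bounds the length of the non-growing middle. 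It follows that $\pr_\gamma(n)\sim\max_i\pr_{\gamma_i}(n)+O(n)$, the max being taken over growing terms $\gamma_i$.

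It thus suffices to classify $\pr_\tau$ for a single growing term $\tau$. When $\tau$ is an exceptional path $eu^p\bar e'$ or a linear edge with $f(e)=eu$, direct computation gives $f^k_\sharp(\tau)=eu^{p_k}\bar e'$ or $eu^k$, whose recurrent subwords consist of cyclic shifts in $u^\infty$ plus at most one subpath per length touching $e$ or $\bar e'$; thus $\pr_\tau$ is bounded. When $\tau$ is an EG edge, an NEG edge $e$ with $f(e)=eu$ and growing $u$, or a growing connecting path, I raise $f$ to a power so that pigeonhole on initial edges of iterates within the stratum produces an edge $e'$ reached by iterating $\tau$ with $f(e')=e'\cdot\sigma$; this places us in the setting of Lemma~\ref{morceaux} with ray $\rho_{e'}=e'\sigma f_\sharp(\sigma)\cdots$. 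Growing connecting paths are handled by induction on height since $f_\sharp(\tau)\subset G_{i-1}$. I then verify that every recurrent subpath of $f^k_\sharp(\tau)$ is eventually a subpath of the stabilized prefix $f^k_\sharp(e')$ of some $f^{j+k}_\sharp(\tau)$, giving $\pr_\tau\sim\pr_{\rho_{e'}}$. Lemma~\ref{compar} together with Proposition~\ref{comptot} then classifies $\pr_{\rho_{e'}}$ as equivalent to one of $1,n,n\log\log n,n\log n,n^2$, and since every such function other than $1$ dominates $n$, the $O(n)$ straddling contribution from the previous step is absorbed.

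The main obstacle I expect is the verification of the last containment: that recurrent subpaths of $f^k_\sharp(\tau)$ for an EG edge $\tau$ indeed occur inside the stabilized prefix arising from $e'$, rather than being produced by other parts of $f^k_\sharp(\tau)$ whose combinatorial structure depends on $k$. A secondary subtlety is the bilateral stabilization needed in the block-decomposition step, particularly when consecutive growing terms of different types flank a long non-growing middle, since one must rule out that their combined end behaviors produce an unexpectedly large recurrent contribution.
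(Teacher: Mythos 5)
The critical step in your proposal — classifying $\pr_\tau$ for a growing EG or NEG edge $\tau$ by reducing to the recurrence complexity of a ray $\rho_{e'}$ — does not work, and the obstacle you flag at the end is fatal rather than merely technical. There are two distinct problems.

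First, even if the equivalence $\pr_\tau \sim \pr_{\rho_{e'}}$ were established, invoking ``Lemma~\ref{compar} together with Proposition~\ref{comptot}'' does not classify $\pr_{\rho_{e'}}$. Proposition~\ref{comptot} classifies the \emph{ordinary} complexity $\cp_\rho$, not the recurrence complexity $\pr_\rho$; Lemma~\ref{compar} only gives $\pr_{\rho_{e'}}\sim\pr_{\gamma'}$ for the auxiliary path $\gamma' = \sigma.f_\sharp(\sigma).f^2_\sharp(\sigma)$, which is again an instance of the very Proposition~\ref{comprec} you are trying to prove. Nothing in those two results pins down $\pr_{\rho_{e'}}$.

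Second, and more fundamentally, the claimed equivalence $\pr_\tau\sim\pr_{\rho_{e'}}$ is simply false, because the path notion $\pr_\tau$ (abstract subpaths occurring in infinitely many $f^k_\sharp(\tau)$, possibly always in the same position) and the ray notion $\pr_{\rho_{e'}}$ (subpaths recurring at infinitely many positions along a fixed ray) diverge. Example~\ref{rcdif} already provides a counterexample inside the paper: with $a\mapsto a$, $b\mapsto ba$, $c\mapsto cb$, the NEG edge $c$ has prefix-nested iterates $f^k_\sharp(c)$, so $\pr_c = \cp_c$, which is quadratic, while $\pr_{\rho_c}$ is only linear. Your ``main obstacle'' is not a gap to be filled — it is exactly where the statement you want is wrong.

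The paper's argument exploits the opposite and much simpler observation: for a non-linear, non-fixed edge $e$ in an EG or NEG stratum (after raising $f$ to a power), $e$ itself appears as a term of the complete splitting of $f^k_\sharp(e)$ for all large $k$, hence $f^m_\sharp(e)$ is a subpath of $f^{m+k}_\sharp(e)$; therefore \emph{every} subword of every iterate counts for recurrence and $\pr_e=\cp_e$. The total complexity $\cp_e$ is then classified by Proposition~\ref{comptot} via Remark~\ref{tcf}, with no passage to rays or to the boundary. This completely bypasses the stabilized-prefix bookkeeping you were worried about. Your block-decomposition step is sound and is morally the paper's Lemma~\ref{bete}, but note that Lemma~\ref{long} has a hypothesis excluding linear edges and exceptional paths and so cannot be invoked unconditionally to bound the non-growing middle; that bound follows more directly from the fact that non-growing terms have bounded length and there are finitely many terms in $\gamma$.
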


We start by the following simple remark.

\begin{lem} \label{bete}
Assume that  $\gamma$ splits as $\gamma_1\cdot\gamma_2$. If $\pr_{\gamma_1}\gtrsim n$ or $\pr_{\gamma_2} \gtrsim n$, then $\pr_{\gamma}$ is the maximum of $\pr_{\gamma_1}$, $\pr_{\gamma_2}$. 

\end{lem}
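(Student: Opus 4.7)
The plan is to decompose every length-$n$ subpath of $f^k_\sharp(\gamma)$ according to its position relative to the split, and count the three resulting types. Since $\gamma=\gamma_1\cdot\gamma_2$ is a splitting, one has $f^k_\sharp(\gamma)=f^k_\sharp(\gamma_1)\cdot f^k_\sharp(\gamma_2)$ with no cancellation across the concatenation point, so any length-$n$ subpath of $f^k_\sharp(\gamma)$ is either (a) contained in $f^k_\sharp(\gamma_1)$, (b) contained in $f^k_\sharp(\gamma_2)$, or (c) straddling: it consists of the last $a$ edges of $f^k_\sharp(\gamma_1)$ followed by the first $b$ edges of $f^k_\sharp(\gamma_2)$, with $a,b\ge1$ and $a+b=n$. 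An abstract path of length $n$ that occurs in infinitely many $f^k_\sharp(\gamma)$ must, by pigeonhole, occur infinitely often in at least one of these three ways, giving the bound $\pr_\gamma(n)\le\pr_{\gamma_1}(n)+\pr_{\gamma_2}(n)+S(n)$, where $S(n)$ is the number of straddling abstract paths appearing infinitely often.

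The key step, and the main obstacle, is showing $S(n)\le n-1$. For each split position $a\in\{1,\dots,n-1\}$, I would like to argue that both the last $a$ edges of $f^k_\sharp(\gamma_1)$ and the first $n-a$ edges of $f^k_\sharp(\gamma_2)$ stabilize as $k\to\infty$, so that only one straddling path contributes per $a$. Stabilization of the prefix of $f^k_\sharp(\gamma_2)$ is given directly by Lemma \ref{stabpref} (after replacing $f$ by the power $g$ it provides, which is allowed throughout the paper). Stabilization of the suffix of $f^k_\sharp(\gamma_1)$ is its orientation-reversed analog; since the lemma itself is stated only for prefixes, I would either rerun its proof with ``initial edge'' replaced by ``terminal edge'' or, equivalently, apply the original statement to $\bar\gamma_1$ after verifying that $\overline{f^k_\sharp(\gamma_1)}=f^k_\sharp(\bar\gamma_1)$. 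This gives $S(n)\le n-1$ and hence $\pr_\gamma(n)\le\pr_{\gamma_1}(n)+\pr_{\gamma_2}(n)+(n-1)$.

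To finish, under the hypothesis $\pr_{\gamma_1}\gtrsim n$ or $\pr_{\gamma_2}\gtrsim n$ one has $\max(\pr_{\gamma_1},\pr_{\gamma_2})\gtrsim n$, which absorbs the linear error term and yields $\pr_\gamma\lesssim\max(\pr_{\gamma_1},\pr_{\gamma_2})$. The reverse inequality $\max(\pr_{\gamma_1},\pr_{\gamma_2})\le\pr_\gamma$ is immediate, since any subpath of $f^k_\sharp(\gamma_i)$ is automatically a subpath of $f^k_\sharp(\gamma)$. Combining the two, one obtains $\pr_\gamma\sim\max(\pr_{\gamma_1},\pr_{\gamma_2})$, which is the claim.
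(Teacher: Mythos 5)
Your proof is correct and follows essentially the same route as the paper: the paper's argument is precisely to observe that every subword of $f^k_\sharp(\gamma)$ either lies in one of $f^k_\sharp(\gamma_1)$, $f^k_\sharp(\gamma_2)$, or straddles the image of the splitting vertex, and to invoke Lemma \ref{stabpref} to see that the straddling subwords contribute only a linear error, which is absorbed by the hypothesis $\max(\pr_{\gamma_1},\pr_{\gamma_2})\gtrsim n$. You simply spell out the suffix-stabilization point (reduce to prefix stabilization via $\overline{f^k_\sharp(\gamma_1)}=f^k_\sharp(\bar\gamma_1)$), which the paper leaves implicit.
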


\begin{proof}
 This follows from stabilisation of prefixes (Lemma \ref{stabpref}).  Words of length $n$ appearing in $f^k_\sharp(\gamma)$ appear in $f^k_\sharp(\gamma_1)$ or in $f^k_\sharp(\gamma_2)$ or contain the image of the splitting point. By Lemma \ref{stabpref}, neglecting the third type causes a linear error in $\pr_{\gamma}$.
\end{proof}

\begin{proof}[Proof of Proposition \ref{comprec}]
We first consider the recurrence complexity of terms   other than connecting paths.
  Fixed edges, linear edges, INP's and exceptional paths have bounded recurrence complexity.
Now let $e$ be a   non-linear and non-fixed edge  in an EG or NEG stratum. Since $e$ appears as a term in infinitely many images $f^k_\sharp(e)$, its recurrence complexity $\pr_{\gamma}$ is equal to its total complexity $\cp_\gamma$, which   is unbounded and satisfies one of the conclusions of the proposition by 
Proposition \ref {comptot}.

Now suppose that $\gamma$ is completely split, and no term is a connecting path. If every term of $\gamma$ has bounded recurrence complexity,  
one easily checks that $\pr_\gamma$  is bounded or linear.   Otherwise, using Lemma \ref{bete}, one shows by induction on the number of terms that the proposition holds for $\gamma$.

  This proves the proposition when no connecting path occurs, since any $\gamma$ has a completely split image $f^k_\sharp(\gamma)$.
To handle connecting paths, one shows by induction on height that connecting paths satisfy the proposition, and moreover any connecting path $\gamma$ with bounded recurrence complexity has an image $f^k_\sharp(\gamma)$ whose complete splitting only contains fixed edges, linear edges, INP's and exceptional paths. One then argues as before.
\end{proof}

\subsection{Recurrence complexity of fixed points}

\begin{thm} \label{compar3}
Let $\alpha$ be an automorphism of a finitely generated free group $F$, and let $X\in\bo F$ be fixed by $\alpha$. Assume $X\notin\bo\Fix\alpha$. 

\begin{itemize}
\item The recurrence complexity $\pr_X$ is equivalent to $n,n\log\log n, n\log n$, or $n^2$.
\item If the usual complexity   $\cp_X$ is not quadratic, in particular if $\alpha$ is fully irreducible or atoroidal, then 
$  \pr_X\sim \cp_X$. 
\end{itemize}
\end{thm}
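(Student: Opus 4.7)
The plan is to combine Lemma \ref{morceaux} and Lemma \ref{compar} with Proposition \ref{comprec}, and for the second bullet to return to the substitution $\tau$ introduced in Section \ref{lbd}. More precisely, I would apply Lemma \ref{morceaux} to represent $X$ by a ray $\rho = e\sigma f_\sharp(\sigma) f^2_\sharp(\sigma)\cdots$ and set $\gamma = \sigma f_\sharp(\sigma) f^2_\sharp(\sigma)$ as in Lemma \ref{compar}. That lemma gives $\cp_X \sim \cp_\gamma$, $\pr_X \sim \pr_\gamma$, and the lower bound $\pr_X \gtrsim n$ (Remark \ref{pasper} ensures that attracting or repelling fixed points are not rational, so $\rho$ is not eventually periodic). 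Proposition \ref{comprec} confines $\pr_\gamma$ to one of $1,\,n,\,n\log\log n,\,n\log n,\,n^2$ up to equivalence; the linear lower bound excludes the bounded case, giving the first bullet.

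For the second bullet, assume $\cp_X$ is not quadratic. Proposition \ref{comptot} then forces every growing term in the complete splitting of every $f^k_\sharp(\gamma)$ to grow exponentially, so no exceptional paths or linear edges are present. I would then follow Section \ref{lbd} and introduce the substitution $\tau:R\to R^*$ on the finite alphabet $R$ of terms, with fixed word $Y$. The comparison argument of Section \ref{lbd} yields $\cp_X\sim\cp_Y$, and the same bookkeeping, reading ``infinitely many occurrences'' in place of ``at least one occurrence'' throughout, gives $\pr_X\sim\pr_Y$. It therefore suffices to show $\cp_Y(n)-\pr_Y(n)=O(n)$.

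For that, split $R=R_{\mathrm{fin}}\sqcup R_\infty$ according to whether a letter appears finitely or infinitely often in $Y$. The $R_{\mathrm{fin}}$-positions form a finite subset of positions of $Y$, so at most $O(n)$ subwords of length $n$ can touch one of them. For a subword $w$ of length $n$ made entirely of $R_\infty$-letters, I would track $w$ backwards through the substitution to a ``level-$k$ ancestor'': the shortest subword $\pi$ of $Y$ with $w\subseteq\tau^k(\pi)$ at a fixed offset. The exponential growth hypothesis, together with Lemma \ref{long} to control maximal non-growing full subpaths, forces $k=O(\log n)$ and $|\pi|\le K$ for a uniform constant $K$; hence $\pi$ ranges over a finite set of patterns, and $w$ inherits recurrence from any recurrent such $\pi$. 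Non-recurrent patterns $\pi$ appear in $Y$ only finitely often, and each contributes at most $|\tau^k(\pi)|=O(n)$ length-$n$ subwords, so non-recurrent $w$'s number $O(n)$ in total.

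Finally, the estimate $\cp_X \lesssim \pr_X + O(n)$ obtained from $\cp_Y - \pr_Y = O(n)$, combined with $\pr_X \le \cp_X$ and $\pr_X\gtrsim n$, gives $\pr_X\sim \cp_X$ in each non-quadratic regime: the $O(n)$ error is absorbed when $\cp_X$ is equivalent to $n\log\log n$ or $n\log n$, and when $\cp_X\sim n$ the sandwich $n\lesssim \pr_X\le \cp_X\sim n$ is immediate. The fully irreducible and atoroidal cases follow since Corollary \ref{cassimples} forces $\cp_X$ to be non-quadratic in both situations. I expect the main obstacle to be the rigorous justification of the bounded-ancestor step: non-growing letters in $R$ (INP's, fixed edges, pre-Nielsen connecting paths) have constant $\tau^k$-images, so one needs Lemma \ref{long} together with a careful analysis of the strongly connected components of the incidence graph of $\tau$ restricted to $R_\infty$ (primitive stratum by stratum in the exponential regime) to ensure that the inductive reduction of $|\pi|$ actually takes place at rate $\lambda^{-k}$.
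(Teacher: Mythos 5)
Your first bullet follows exactly the paper's route: Lemma \ref{morceaux} produces the ray $\rho$, Lemma \ref{compar} transfers to a finite path $\gamma$ and gives the linear lower bound (via Remark \ref{pasper} and \cite{morse}), and Proposition \ref{comprec} pins $\pr_\gamma$ to the list; no comment needed there.

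For the second bullet you genuinely depart from the paper. The paper does \emph{not} reintroduce the substitution $\tau$ of Subsection \ref{lbd}. Instead it first \emph{merges} the terms of the complete splittings of the $f^k_\sharp(\sigma)$ into ``pieces'' that are all growing and all of length at most $3K$ (here Lemma \ref{long} controls the maximal non-growing blocks, and the absence of linear edges/exceptional paths bounds the growing terms). It then \emph{truncates} $\rho$ by a fixed prefix so that every two-piece block $bb'$ occurring in the piece-splitting occurs infinitely often, and covers the truncated $\rho$ with overlaps by $f^k_\sharp(b_jb_{j+1})$; exponential growth (as in the proof of Lemma \ref{compar}) shows that, up to a linear error, every length-$n$ subpath sits in some $f^k_\sharp(b_jb_{j+1})$, which is recurrent because $b_jb_{j+1}$ is. The point of this manoeuvre is precisely to make every letter of the covering alphabet \emph{growing and recurrent by construction}, so there is no ancestry bookkeeping left to do.

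Your route keeps $R$ as the alphabet of \emph{all} terms (including the non-growing ones: fixed edges, INP's, pre-Nielsen connecting paths), and then tries to count the non-recurrent words by tracing each length-$n$ subword $w$ of $Y$ back to a bounded-length ancestor $\pi$. This is the step you rightly flag as the obstacle, and I think the gap there is real, especially in the exponentially divergent case: if the two boundary letters of the shortest ancestor $\pi_k$ carry the fastest eigenvalue $\lambda_{\max}$ while the (forced non-empty, by Lemma \ref{long}) growing interior letters only carry the slowest eigenvalue $\lambda_{\min}$, then $n$ only bounds $\lambda_{\min}^k$, so $\lambda_{\max}^k$ and hence $|\tau^k(\pi)|$ can be superlinear in $n$. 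Your per-$\pi$ estimate ``each contributes at most $|\tau^k(\pi)|=O(n)$ subwords'' then fails, and the resulting error term is not obviously $O(n)$; an $O(n\log n)$ error would be useless exactly in the $n\log n$ (exponentially divergent) regime you need to handle. The paper's merging--then--truncating device avoids this by never having to compare distinct eigenvalues in a shortest-ancestor estimate. A cleaner fix within your framework, parallel to the paper's truncation, would be: choose $N_0$ so that every length-$\le K$ word occurring in $Y_{>N_0}$ is recurrent (finitely many non-recurrent patterns, each occurring finitely often), then argue that any $w$ occurring only at positions $>N_1(n)$ has its bounded ancestor at position $>N_0$ and is therefore recurrent, so that the non-recurrent $w$'s live in a prefix of controlled length; but making $N_1(n)=O(n)$ precise runs into the same $\lambda_{\min}$ versus $\lambda_{\max}$ issue unless you first merge into growing pieces, which is exactly what the paper does.

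So: same plan for the first bullet; a workable-in-spirit but incomplete argument for the second bullet, where the gap you identify is the real one and the paper's merging-and-truncation trick is what closes it.
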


 The proof requires the following lemma.

\begin{lem} [Nielsen paths are short] \label{long}
  Let $f:G\to G$ be a completely split train track map.
Let $\gamma$ be a completely split path such that no  $f^k_\sharp(\gamma)$ has a term which  is a linear edge or an exceptional path. There is a number $C$ such that, if   $\delta$ is any non-growing full subpath of some $f^k_\sharp(\gamma)$, then $\delta$ has length   $ | \delta | \le C$. 
\end{lem}

\begin{proof} 
   Assuming that the lemma is wrong, fix a counterexample  $\gamma$ of minimum height $r$. Thus the paths $f^k_\sharp(\gamma) $ contain arbitrarily long non-growing full subpaths. Write $\gamma$ as a concatenation of growing terms and maximal non-growing full subpaths. The length of these non-growing paths remains bounded when applying $f^k_\sharp $, so some growing term has to contradict the lemma. This term cannot be an INP, an exceptional path, a connecting path (by minimality of $r$), so it is an edge  $e$ in an NEG or EG stratum $H_r$. We show that both possibilities lead to a contradiction.

If it is an NEG edge $e$ with $f(e)=eu$, there is a bound for the length of all non-growing full subpaths of the paths $f^k_\sharp(u) $, because otherwise $u$ would be a counterexample of   height $<r$.  Since $e$ is a counterexample,   some $f^k_\sharp(u) $ has to be non-growing. This means that $u$ is pre-Nielsen and $e$ is linear,  contradicting the assumptions of the lemma.

In the EG case, consider all maximal full subpaths $\theta_i$ of   height $<r$ in $f(e')$, for all edges $e'$  of height $r$. 
 By minimality of $r$, there is a bound $C$ for the length of all non-growing full subpaths of the paths $f^k_\sharp(\theta_i)$, for $k\ge0$. We claim that $C$ bounds the length of non-growing full subpaths of $f^k_\sharp(e) $. 
 
 Indeed, let $\delta$ be a non-growing full subpath of $f^k_\sharp(e)$, of length $>C$. Consider the largest $q\le k-1$ such that $\delta$ is contained in $f^{k-q}_\sharp(e')$, with $e'$ an edge of height $r$ contained in $f^q_\sharp(e) $. Recall that the image of any edge in $H_r$ starts and ends with an edge in $H_r$, while no term of $\delta$ may be an edge in $H_r$.  This implies that 
 $\delta$ has to be contained in the image by $f^{k-q-1}_\sharp$ of a subpath of height $<r$ contained in $f(e')$.  
 It follows that $\delta$ has length $\le C$, a contradiction.
  \end{proof}

\begin{proof}[Proof of Theorem \ref{compar3}]
Represent $X$ by 
$\rho=\sigma \cdot f_\sharp (\sigma)\cdot f^2_\sharp(\sigma)\cdot f^3_\sharp(\sigma)\cdots$ 
as in Lemma \ref{morceaux}. 
The first assertion follows directly from Proposition \ref{comprec} and Lemma \ref{compar}. We prove the second one.

Each $f^k_\sharp(\sigma)$ has a complete splitting. Since the complexity is not quadratic, there is no linear edge or exceptional path. All growing terms grow exponentially and there is a bound $K$ for the length of growing terms and the length of non-growing full subpaths  by Lemma \ref{long}. By merging terms, we can construct a splitting of each $f^k_\sharp(\sigma)$, hence of $\rho$,  such that all pieces are growing and have length at most $3K$. 

Consider all   pieces $b,b'$ such that $bb'$   appears in this splitting of $\rho$. We truncate $\rho$, by removing some initial path 
$ \sigma \cdot f_\sharp (\sigma)\cdot {\dots} \cdot f^i _\sharp(\sigma)$
so  that, if $bb'$ appears, it appears infinitely often. We   write the splitting of $ f^{i +1}_\sharp(\sigma) f^{i+2} _{\sharp}(\sigma)$ by pieces as 
$b_1\cdot {\dots} \cdot b_r$. 

As $j$ and $k$ vary, the paths $f^k_\sharp(b_jb_{j+1})$ cover (the truncated) $\rho$ with overlaps so, as in the proof of Lemma \ref{compar}, exponential growth implies that we can compute the usual complexity of $X$, up to a linear error, by considering only subpaths of $\rho$ contained in some $f^k_\sharp(b_jb_{j+1})$. Since every $b_jb_{j+1}$ appears infinitely often in  the splitting of $\rho$ constructed above, we conclude $  \pr_X\sim \cp_X$. 
 \end{proof}

\subsection{Complexity of laminations} \label{lami}

Laminations are an important tool in the theory of automorphisms of free groups. We refer to \cite{CHL1} for a thorough discussion of   various viewpoints: algebraic laminations, symbolic laminations, laminary languages. 

In this subsection we explain that any lamination has a complexity function, well-defined up to equivalence. Moreover, any $X\in\bo F$ generates a lamination $L_X$.  If $X$ is fixed by some automorphism $\alpha\in\Aut(F)$, we use Theorem  
\ref{compar3} to control the complexity of $L_X$. This applies in particular to the attracting laminations defined in \cite{Tits1},    see   Corollary \ref {corlam} below.

To define a lamination simply, we fix a free basis $A$ of $F$. A (symbolic) lamination is a non-empty set $L$ of reduced bi-infinite words $z=(z_i)_{i\in\Z}$ in the alphabet $E=A\cup A^{-1}$ (called the leaves of the lamination)  which is closed (in the product topology on $E^\Z$), invariant under the shift $\sigma$ (defined by  $(\sigma(z))_i=z_{i+1}$), and invariant under the flip $\tau$ defined by $(\tau(z))_i=(z_{-i})^{\m}$.

As in Subsection \ref{cfg}, there is an action of $\Aut(F)$ on the set of laminations: replace each letter in $E$ by its image, and reduce   (bounded backtracking guarantees that this is well-defined, see \cite{CHL1});  inner automorphisms act trivially, so this is really an action of $\Out(F)$. Similarly, one may express a lamination $L$ in a different basis $B$: replace each letter in $E$ by its expression as a word on $B^{\pm1}$ and reduce.

The complexity $\cp_L$ of a lamination $L$ is defined by counting all the subwords appearing in some leaf of $L$. Arguing as in Subsection \ref{cfg}, one sees that $\cp_L$ is well-defined up to equivalence.

Now let $X$ be an element of $\bo F$, viewed as a right-infinite word $X_A$ on the alphabet $E$. One defines the language $\call_X$ associated to $X$ as the set of all finite words $u$ such that $u$ or $u\m$ appears infinitely often as a subword of $X_A$   (note that $\call_X$ does not change if we truncate $X$). It is a \emph{laminary language} in the sense of \cite{CHL1}. One then defines the lamination $L_X$ associated to $X$ as the set of words $z=(z_i)_{i\in\Z}$ as above such that any finite subword of $z$ belongs to $\call_X$. The assignment $X\mapsto L_X$  is equivariant with respect to the action of $\Aut(F)$.

It is clear from this construction that, given $X\in\bo F$, the complexity of $L_X$ is equivalent to the recurrence complexity of $X$. We may therefore use Theorem \ref{compar3}.

\begin{thm} \label{comparlam}
Let $\alpha$ be an automorphism of a finitely generated free group $F$, and let $X\in\bo F$ be fixed by $\alpha$. Assume $X\notin\bo\Fix\alpha$. Let $L_X$ be the lamination associated to $X$.

\begin{itemize}
\item The   complexity of $L_X$ is equivalent to $n,n\log\log n, n\log n$, or $n^2$.
\item If the usual complexity   $\cp_X$ of $X$ is not quadratic, in particular if $\alpha$ is fully irreducible or atoroidal, then the complexity of $L_X$ is equivalent to $\cp_X$. \qed
\end{itemize}
\end{thm}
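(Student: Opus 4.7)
The plan is to reduce Theorem \ref{comparlam} directly to Theorem \ref{compar3} by establishing the equivalence $\cp_{L_X}\sim \pr_X$ between the complexity of the lamination and the recurrence complexity of the fixed point. Once this is done, both bullet points of the theorem transfer immediately, since the second bullet is merely the quantitative refinement already present in Theorem \ref{compar3}, and the fully-irreducible / atoroidal cases are controlled by Corollary \ref{cassimples} (which rules out quadratic $\cp_X$).

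Fix a free basis $A$ of $F$, write $E=A\cup A^{-1}$, and represent $X$ by the right-infinite reduced word $X_A$. First I would verify the inequality $\cp_{L_X}(n)\le 2\pr_X(n)$: any length-$n$ factor $u$ of a bi-infinite leaf $z\in L_X$ lies in the laminary language $\call_X$ by definition, hence either $u$ or $u^{-1}$ appears infinitely often in $X_A$, and the flip symmetry gives the factor of $2$. For the reverse inequality $\pr_X(n)\le \cp_{L_X}(n)$, I would show that every $u\in\call_X$ of length $n$ occurs as a factor of some leaf. Taking a sequence of positions $i_k$ in $X_A$ where $u$ (or $u^{-1}$) appears, I would extract from the shifted words $\sigma^{i_k}(X_A)$ a subsequential limit $z\in E^\Z$ by compactness of $E^\Z$. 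The limit $z$ is reduced because reducedness is a local condition preserved under pointwise convergence, and every finite factor of $z$ appears in $X_A$ at infinitely many positions, hence lies in $\call_X$; closure under shift and flip is then automatic. Thus $z\in L_X$ and contains $u$ as a factor.

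Combining these two inequalities gives $\cp_{L_X}\sim \pr_X$ (in fact Lipschitz-equivalence). Since both sides are well-defined up to equivalence independently of the chosen basis $A$ by the same argument as in Subsection \ref{cfg} (changing the basis amounts to applying an automorphism, which is handled by the general framework), we may apply Theorem \ref{compar3}: under the hypothesis $X\in\bo F\setminus\bo\Fix\alpha$, $\pr_X$ is equivalent to one of $n, n\log\log n, n\log n, n^2$, and is equivalent to $\cp_X$ whenever $\cp_X$ is not quadratic. The fully-irreducible and atoroidal cases fall under this last condition by Corollary \ref{cassimples}, yielding both bullets.

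The only non-formal step is the compactness/extraction argument, and the main subtlety to check carefully is that the limit $z$ is indeed reduced and bi-infinite (rather than collapsing to a one-sided ray at a boundary shift), but this follows because $u$ has positive length and sits at a fixed interior position of each $\sigma^{i_k}(X_A)$ after suitable re-indexing, so by a diagonal argument one can ensure convergence on every finite window around the origin. With this in place, the proof of Theorem \ref{comparlam} requires no further work beyond invoking Theorem \ref{compar3}.
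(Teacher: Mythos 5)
Your argument is correct and follows the same route as the paper: the paper states as ``clear from the construction'' that $\cp_{L_X}\sim\pr_X$ and then cites Theorem~\ref{compar3}, while you simply make that equivalence explicit via the definition of $\call_X$ (giving $\cp_{L_X}\le 2\pr_X$) and a standard compactness/diagonal extraction (giving $\pr_X\le\cp_{L_X}$). The details you supply, including the check that the subsequential limit $z$ is a reduced bi-infinite word because $i_k\to\infty$, are the right ones and fill in exactly what the paper elides.
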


We conclude by giving examples illustrating this theorem. 
Recall (Example \ref{rcdif}) that the complexity of $L_X$ may differ from that of $\cp_X$.

\begin{example}[Attracting laminations] \label{atlam}  Let $f:G\to G$ be a completely split train track map representing $\Phi\in\Out(F)$ as in Subsection \ref{cstt}. 
Let $H_r$ be an exponential stratum. Replacing $f$ by a power if needed, we can find an edge $e$ in $H_r$ such that $f(e)$ splits as $eue_2$ with $e_2$ an edge in $H_r$. The union of the increasing sequence $e\inc f(e)\inc f^2_\sharp(e)\cdots \inc f^p_\sharp(e)\inc \cdots$ is a ray $\rho$ in $G$ which defines an element $X\in\bo F$ (well-defined up to the action of $F$ on $\bo F$ by left-multiplication). This element $X$ is an attracting fixed point of some representative $\alpha$ of $\Phi$ in $\Aut(F)$, and the lamination $L_X$ associated to $X$ is the attracting lamination associated to the stratum $H_r$ (see \cite{gafa}, \cite{Tits1}). 
Similarly, if $e$ is an edge in an NEG stratum with $f(e)=eu$, and $u$ is growing, then   $X=euf_\sharp(u)f^2_\sharp(u)\cdots$
 is an attracting fixed point of a representative $\alpha$.  
 
 \end{example}

 \begin{cor}\label{corlam}  Let $\Phi\in\Out(\F_n)$.
 The complexity of any attracting lamination of   $\Phi$ is linear, quadratic, equivalent to $n\log n$, or equivalent to $n\log \log n$. It is linear if $\Phi$ is fully irreducible, at most $n\log n$ if $\Phi$ is atoroidal. \qed
\end{cor}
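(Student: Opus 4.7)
The plan is to reduce Corollary~\ref{corlam} to Theorem~\ref{comparlam} via the construction of Example~\ref{atlam}. The key point is that every attracting lamination of $\Phi \in \Out(F_n)$ can be realized as $L_X$ for an attracting fixed point $X \in \bo F$ of some representative $\alpha$ of a suitable power of $\Phi$, after which the complexity dichotomy and the refinements follow directly from the results already established.

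First, let $L$ be an attracting lamination of $\Phi$. By the Bestvina--Feighn--Handel theory, some power $\Phi^k$ is represented by a completely split train track map $f:G\to G$ in which $L$ is the attracting lamination associated to some EG stratum $H_r$. Example~\ref{atlam} then furnishes an edge $e\in H_r$ with $f(e)$ splitting as $eu e_2$ (with $e_2\in H_r$), so that the ray $\rho = e\cdot u e_2 \cdot f_\sharp(u e_2)\cdots$ represents an element $X\in\bo F$ which is an attracting fixed point of some representative $\alpha$ of $\Phi^k$, and for which $L_X = L$. Since replacing $\Phi$ by $\Phi^k$ does not alter the lamination $L$ or its complexity, no generality is lost.

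Second, because $X$ is attracting, it is not in $\bo\Fix\alpha$, so Theorem~\ref{comparlam} applies and gives that the complexity of $L_X$ is equivalent to $n$, $n\log\log n$, $n\log n$, or $n^2$. This already yields the first assertion of the corollary.

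For the refinements, note that full irreducibility and atoroidality are properties of the outer class, stable under powers, so $\alpha$ inherits them from $\Phi$. If $\Phi$ is fully irreducible, Corollary~\ref{cassimples} gives that $\cp_X$ is linear; in particular $\cp_X$ is not quadratic, so the second bullet of Theorem~\ref{comparlam} yields that the complexity of $L_X$ is equivalent to $\cp_X$, hence linear. If $\Phi$ is atoroidal, Corollary~\ref{cassimples} gives that $\cp_X$ is at most $n\log n$ (and in particular not quadratic), so again the second bullet of Theorem~\ref{comparlam} gives that the complexity of $L_X$ is equivalent to $\cp_X$, hence at most $n\log n$. The main point to verify carefully is the identification in the first step: that every attracting lamination of $\Phi$ really does arise from an EG stratum of a CT representative in the explicit form of Example~\ref{atlam}, so that the fixed-point framework of Theorem~\ref{comparlam} applies; this is the content of the BFH construction and is the only step requiring outside input.
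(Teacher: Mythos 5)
Your proof is correct and follows essentially the same route the paper intends: the corollary is marked \verb|\qed| in the source because it is a direct assembly of Example~\ref{atlam} (which realizes each attracting lamination as $L_X$ for an attracting fixed point $X$ of some representative of a power of $\Phi$), Theorem~\ref{comparlam} (the complexity dichotomy for $L_X$), and Corollary~\ref{cassimples} (the refinements in the fully irreducible and atoroidal cases). You correctly flag, and correctly dispose of, the only nontrivial identification: that passing to $\Phi^k$ changes neither the set of attracting laminations nor the properties of full irreducibility and atoroidality.
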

 
 The representatives $\alpha$ constructed above are rather special. ``Most'' representatives $\alpha$ of $\Phi$ have exactly one attracting fixed point    (see \cite{most}), but it is not given by the previous construction.

\begin{example}\label{lamiex}
 Let $F$ be the free group on two generators $a,b$. Let $\Phi$ be the outer automorphism of $F$ represented by $\alpha$ sending $a$ to $ab$ and $b$ to $bab$ (it is realized geometrically by a pseudo-Anosov homeomorphism of a punctured torus and has one EG stratum). 
 The infinite word $X=ab^2ab^2abab\cdots=\lim_{n\to\infty}\alpha^n(a)$ is an attracting fixed point and the associated lamination is the attracting lamination of $\Phi$.
 
 Let   $\gamma=[a,b]$;  it is fixed by $\alpha$.   For $k\ge1$, define $\alpha_k$ by $\alpha_k(g)=a\gamma^k\alpha(g)\gamma^{-k} a\m$. It is another representative of $\Phi$, with attracting fixed point $X_k=a\gamma^k\alpha(a)\gamma^k \alpha^2(a)\cdots$. The laminary language $\call_{X_k}$ consists of  
 all words $w$ such that $w^{\pm1}$ is contained in some $\alpha^n(a)\gamma^k\alpha^{n+1}(a)$. 
\end{example}
 
 This example shows that, for a given $\Phi$, Theorem \ref{comparlam} may apply to infinitely many different laminations.


\bigskip\bigskip
\small
{\begin{flushleft}
Arnaud Hilion\\
Institut de Math\'ematiques de Toulouse UMR 5219. Universit\'e de Toulouse \& CNRS. UPS, F-31062 Toulouse Cedex 9, France\\
\emph{e-mail: }\texttt{arnaud.hilion@math.univ-toulouse.fr}\\[8mm]

Gilbert Levitt\\
Laboratoire de Math\'ematiques Nicolas Oresme (LMNO)\\
Universit\'e de Caen et CNRS (UMR 6139)\\
(Pour Shanghai : Normandie Univ, UNICAEN, CNRS, LMNO, 14000 Caen, France)\\
 \emph{e-mail: }\texttt{levitt@unicaen.fr}\\[8mm]

\end{flushleft}
}

 \end{document}